\documentclass[reqno]{amsart}
\usepackage{amsmath, amsthm, amssymb, amscd}

\usepackage[a4paper,width=160mm,top=25mm,bottom=25mm,centering]{geometry}

\usepackage{enumitem}
\usepackage{graphicx} 
\usepackage[dvipsnames]{xcolor}
\usepackage{comment}
\usepackage{cases}
\usepackage{array}
\usepackage{multirow}
\usepackage{longtable}
\usepackage{microtype}

\makeatletter
\newsavebox{\@brx}
\newcommand{\llangle}[1][]{\savebox{\@brx}{\(\m@th{#1\langle}\)}%
  \mathopen{\copy\@brx\kern-0.5\wd\@brx\usebox{\@brx}}}
\newcommand{\rrangle}[1][]{\savebox{\@brx}{\(\m@th{#1\rangle}\)}%
  \mathclose{\copy\@brx\kern-0.5\wd\@brx\usebox{\@brx}}}
\makeatother

\usepackage{tikz}
\usepackage{pgf}
\usepackage{tikzscale}
\usepackage{tikz-cd}
\usetikzlibrary{decorations.pathreplacing,decorations.markings}
\usetikzlibrary{arrows, knots, external, positioning}

\makeatletter
\tikzset{column sep/.code=\def\pgfmatrixcolumnsep{\pgf@matrix@xscale*(#1)},
  row sep/.code   =\def\pgfmatrixrowsep{\pgf@matrix@yscale*(#1)},
  matrix xscale/.code=%
    \pgfmathsetmacro\pgf@matrix@xscale{\pgf@matrix@xscale*(#1)},
  matrix yscale/.code=%
    \pgfmathsetmacro\pgf@matrix@yscale{\pgf@matrix@yscale*(#1)},
  matrix scale/.style={/tikz/matrix xscale={#1},/tikz/matrix yscale={#1}}}
\def\pgf@matrix@xscale{1}
\def\pgf@matrix@yscale{1}
\makeatother

\usepackage{url}
\usepackage{caption} 

\newtheorem{theorem}{Theorem}[section]

\newtheorem{lemma}[theorem]{Lemma}
\newtheorem{proposition}[theorem]{Proposition}
\newtheorem{corollary}[theorem]{Corollary}

\theoremstyle{definition}

\newtheorem{remark}[theorem]{Remark}

\newcommand{\CPbar}{\overline{\mathbb{CP}}{}^2}

\numberwithin{equation}{section}

\usepackage[bookmarksnumbered=true, colorlinks=true]{hyperref}
\AtBeginDocument{\hypersetup{citecolor=BlueGreen, linkcolor=BlueGreen, urlcolor=BlueGreen}}

\begin{document}

\title[Exotic involutions on irreducible 4-manifolds from Real Seiberg--Witten theory]{Exotic involutions on irreducible 4-manifolds from Real Seiberg--Witten theory}

\author{Woohyeok Jo}
\address{HUN-REN R\'enyi Institute of Mathematics, H-1053 Budapest, Re\'altanoda utca 13--15, Hungary}
\email{jo.woohyeok@renyi.hu}

\author{Simone Tagliente}
\address{HUN-REN R\'enyi Institute of Mathematics, H-1053 Budapest, Re\'altanoda utca 13--15, Hungary and Eötvös Loránd University, Pázmány Péter sétány 1/C., H-1117 Budapest, Hungary}
\email{tagliente.simone@renyi.hu}

\subjclass[2020]{57M60, 57K41, 57R52, 57K43, 57M12}
\keywords{Exotic involutions, irreducible $4$-manifolds, Real Seiberg--Witten theory, exotic surfaces, anti-symplectic involutions, branched covers, complete intersections}
\date\today

\begin{abstract}
Using Real Seiberg--Witten theory, Miyazawa constructed infinitely many exotic involutions on $\CPbar$, and Baraglia extended this construction to $X\#\CPbar$ for a broad class of $4$-manifolds $X$ equipped with an involution $c$. We show that, under suitable Real Seiberg--Witten hypotheses, the $\CPbar$ summand can be omitted when $X$ is non-spin and the complement of the fixed locus is simply-connected. Our construction also yields infinite families of topologically isotopic but smoothly non-isotopic embeddings of the branch surface in the quotient $X/\langle c\rangle$. As applications, we construct anti-holomorphic involutions on irreducible, simply-connected projective surfaces of general type with infinitely many exotic copies, none of which is anti-symplectic with respect to any orientation-compatible symplectic form. These surfaces include cyclic branched covers of $\mathbb{CP}^2$ and complete intersections of multidegree $(2,4k)$ in $\mathbb{CP}^4$.
\end{abstract}

\maketitle

\section{Introduction}

For an oriented smooth $4$-manifold $X$, two orientation-preserving smooth involutions $c,c'$ on $X$ are said to be \emph{topologically} (resp. \emph{smoothly}) \emph{equivalent} if there exists an orientation-preserving homeomorphism (resp. diffeomorphism) $f:X\to X$ with $f\circ c=c'\circ f$. A family of orientation-preserving smooth involutions on $X$ that are all topologically equivalent but pairwise smoothly inequivalent is called an \emph{exotic family}. Exotic smooth involutions have been constructed by several methods; see \cite{Ue-1996, Ue-1998, Hanke-Kotschick-Wehrheim-2003, Fintushel-Stern-Sunukjian-2009, Kwasik-Schultz-2017}. 

Recently, Miyazawa's construction \cite{Miyazawa-2023} using Real Seiberg--Witten theory, together with a diffeomorphism result of Hughes--Kim--Miller, produced an infinite exotic family of involutions on $\CPbar$ containing complex conjugation; see \cite[Corollary 1.5]{Hughes-Kim-Miller-2024}. Baraglia \cite{Baraglia-2026} then extended this result to show that $X\#\CPbar$ has an infinite exotic family of involutions for a broad class of $4$-manifolds $X$ with an involution; see Theorem \ref{thm:Baraglia} below. The first main result of this paper is to construct an infinite exotic family of involutions on $X$ itself, without taking a connected sum with an additional $\CPbar$, under the hypotheses below. 

\begin{theorem}\label{thm:main}
    Let $X$ be a closed, oriented, smooth, non-spin $4$-manifold with an orientation-preserving smooth involution $c:X\to X$ such that the fixed locus $X^c$ is a nonempty surface and $X-X^c$ is simply-connected. Assume that $(X,c)$ admits a Real spin$^c$ structure $\mathfrak{s}$ satisfying \[
    b_+(X)^{-c}=\frac{c_1(\mathfrak{s})^2-\sigma(X)}{8} \quad \text{and} \quad \deg_R(X,c,\mathfrak{s})\neq 0.
    \]
    Then there exists an infinite exotic family $\{c_n\}_{n=0}^\infty$ of involutions on $X$ with $c_0=c$, satisfying the following additional properties. \begin{itemize}[leftmargin=2em]
        \item For every $n\geq 1$, there is no symplectic form $\omega$ on $X$ inducing the given orientation such that $c_n^*\omega=-\omega$. That is, $c_n$ is not anti-symplectic with respect to any orientation-compatible symplectic form on $X$.
        \item The family remains exotic after any number of equivariant blow-ups at fixed points. More precisely, if $\kappa$ denotes complex conjugation on $\CPbar$, then $\{c_n\# k\kappa\}_{n=0}^\infty$ is an exotic family of involutions on $X\#k \CPbar$ for every $k\geq 0$. 
    \end{itemize}
\end{theorem}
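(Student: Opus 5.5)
Following Miyazawa and Baraglia, I will build the involutions $c_n$ by an equivariant knot-surgery / twist-type modification along a neighbourhood of the fixed surface. Let $Y=X/\langle c\rangle$ with branch surface $\Sigma=X^c/\!\!\sim\,\subset Y$. I will modify $\Sigma$ inside a small ball $B\subset Y$ meeting $\Sigma$ in a trivial disk, producing surfaces $\Sigma_n\subset Y$. One natural choice is to replace $(B,B\cap\Sigma)$ by a boundary-preserving twist-spun or rim-surgered disk. The $\Sigma_n$ should be topologically isotopic to $\Sigma$, because $\pi_1(Y-\Sigma_n)$ is still $\mathbb{Z}/2$; this uses the assumption that $X-X^c$ is simply connected, and then I apply the topological classification of $\mathbb{Z}/2$-surface-group knotted surfaces of Conway--Powell type. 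The double branched covers $X_n$ of $Y$ along $\Sigma_n$ are then equipped with involutions $c_n$.

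The key nontrivial input is a diffeomorphism $X_n\cong X$. In Baraglia's setting this is exactly where the extra $\CPbar$ summand was used, since one stabilization dissolves the difference. Here I would instead use non-spinness. Since $X$ is non-spin, $X-X^c$ simply connected, and the modification is local, I can arrange the modification to occur in a region of $X$ containing a $\CPbar$-like or $S^2\tilde\times S^2$-like piece. The plan is to realize the local change as a Gluck-twist / cork-type move that becomes trivial whenever the ambient manifold contains a characteristic sphere, in the spirit of the Hughes--Kim--Miller diffeomorphism. I would first realize $\Sigma_n$ as $\Sigma$ modified by a $2$-knot-type move whose branched cover differs by a twist along an embedded $S^2$ of odd self-intersection, and then invoke that such twists extend over $X$. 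This step is the main obstacle, and it is exactly where the non-spin hypothesis must enter.

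Smooth inequivalence will be detected by the Real Seiberg--Witten degree. The condition $b_+^{-c}=(c_1^2-\sigma)/8$ makes the virtual dimension of the Real moduli space zero, so $\deg_R$ is a well-defined count. I expect a gluing formula for $\deg_R(X_n,c_n,\mathfrak{s}_n)$ in terms of $n$, for instance growing linearly via an Alexander-polynomial-type factor of the twisting knot, exactly as in Miyazawa's computation. The set of values of $\deg_R$ over Real spin$^c$ structures is an invariant of the equivariant diffeomorphism type, so distinct values for distinct $n$ give pairwise inequivalence. Topological equivalence follows from the topological isotopy of the $\Sigma_n$ lifted to the covers.

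For the two bullets I proceed as follows.
\begin{itemize}[leftmargin=2em]
\item \emph{Anti-symplectic obstruction.} An anti-symplectic $c_n$ gives, by a Real version of Taubes' theorem, $\deg_R=\pm1$ for the canonical Real structure. Moreover it forces constraints on all basic classes. This contradicts the growth, or the specific values, of the degrees for $n\ge1$.
\item \emph{Blow-ups.} The blow-up formula $\deg_R(X\#\CPbar,c\#\kappa,\mathfrak{s}\#\mathfrak{s}_{\pm})=\deg_R(X,c,\mathfrak{s})$ preserves both the dimension condition and the invariants, so exoticness persists after equivariant blow-ups. Topological equivalence persists since the sums are taken at fixed points.
\end{itemize}
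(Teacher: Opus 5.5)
Below, $Q=X/\langle c\rangle$ is the quotient, $B$ its branch surface, and $\Sigma$ the homotopy sphere, following the paper's notation. Your architecture has the right shape: sum the branch surface with a $2$-knot inside a ball, pass to double branched covers, identify the covers with $X$ using non-spinness, and separate the involutions by Real degrees. However, the load-bearing steps are not supplied.

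\textbf{The diffeomorphism $X_n\cong X$.} This is the step you flag as the main obstacle, and it is a genuine gap. The missing observation is that summing $B$ with a $2$-knot $K'\subset S^4$ changes the double branched cover by connected sum with the double branched cover $\Sigma(K')$ of $(S^4,K')$. Everything therefore hinges on choosing $K'$ so that $\Sigma(K')$ is a homotopy sphere that is a Gluck twist $(S^4)_S$.

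The paper takes $K'=\rho P$, the $0$-twist $1$-roll spin of the $(-2,3,7)$-pretzel knot. Then $\Sigma(K')$ is Miyazawa's homotopy sphere, which Kuhrman showed is a Gluck twist of $S^4$, so $X\#\Sigma(K')\cong (X\#S^4)_S$. Since $X$ is simply connected (because $\pi_1(X-X^c)=1$) and non-spin, $X$ minus a ball is a simply connected submanifold with odd intersection form in the complement of $\nu S$. Akbulut--Yasui then gives $(X\#S^4)_S\cong X$.

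This is not a ``twist along an embedded $S^2$ of odd self-intersection''. A Gluck twist is along a sphere with trivial normal bundle, and what must be odd is a simply connected piece of its complement. So no $\CPbar$-type summand is needed, and none exists in the irreducible examples.

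\textbf{The choice of $2$-knot.} With $K'$ unspecified, your other inputs cannot be checked either. That $\pi_1$ of the new branch-surface complement is still $\mathbb{Z}/2$ is a special property of $\rho P$, proved by Miyazawa. It fails, for example, for $2$-twist-spins of nontrivial knots, where one gets the $\pi$-orbifold group of $K$.

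\textbf{Topological triviality.} The paper does not invoke a classification of $\mathbb{Z}/2$-surfaces. You would need one for an arbitrary simply connected $Q$ and possibly disconnected or orientable $B$, and you do not cite one. Instead, the paper rewrites $B\#\rho P$ as a Fintushel--Stern knot surgery on a torus in $Q-B$ (Hughes--Kim--Miller). It then runs Kim--Ruberman's surgery argument rel boundary, which requires $\operatorname{Arf}(P)=0$; that input is absent from your plan.

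\textbf{The Seiberg--Witten part.} The degree behaves multiplicatively, not by Alexander-type linear growth. One has $|\deg_R(\Sigma,\iota,\mathfrak{s}_\Sigma)|=3$, and the equivariant connected-sum formula gives $|\deg_R(X\#n\Sigma,c\#n\iota,\mathfrak{s}\#n\mathfrak{s}_\Sigma)|=3^n|\deg_R(X,c,\mathfrak{s})|$. Crucially, $\mathfrak{s}\mapsto\mathfrak{s}\#n\mathfrak{s}_\Sigma$ is a bijection on Real spin$^c$ structures. Hence every nonzero degree of $(X,c_n)$ is a multiple of $3^n$, and the minimum $m(X,c_n)=3^n m(X,c)$ separates the $c_n$.

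The Real Taubes theorem gives $|\deg_R(X,c,\mathfrak{s}_\omega)|=2$, not $1$. This is because $|\deg_R|=2|\operatorname{SW}_{R,\mathbb{Z}}|$: the unique Real solution is counted twice in the global Coulomb slice. The anti-symplectic obstruction is then simply that $2$ is not divisible by $3^n$. An argument based on ``growth'' of some degrees cannot rule out a small degree on some other Real spin$^c$ structure, which is why the bijection is needed.

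\textbf{The blow-up bullet.} The topological equivalence must first be arranged to be the identity on an invariant ball about a fixed point before it extends over $X\#k\CPbar$. The lifted homeomorphism may equal $c$ on that ball and has to be corrected.
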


Here, a Real spin$^c$ structure consists of a spin$^c$ structure together with a compatible anti-linear lift of $c$ to the spinor bundle. We write $b_+(X)^{-c}$ for the dimension of the $(-1)$-eigenspace of $c^*$ on $H^2_+(X;\mathbb{R})$, and $\deg_R(X,c,\mathfrak{s})$ denotes the associated Real degree; see Section \ref{sec:RSW} for details. 

\vspace{1mm}
Our construction has a corresponding consequence for embeddings of surfaces. For a smooth $4$-manifold $Q$, two smoothly embedded closed surfaces $B,B'\subset Q$ are said to be \emph{topologically} (resp. \emph{smoothly}) \emph{isotopic} if there exists a homeomorphism (resp. diffeomorphism) $f:Q\to Q$ that is topologically (resp. smoothly) isotopic to the identity such that $f(B)=B'$. A family of smoothly embedded surfaces in $Q$ that are all topologically isotopic but pairwise smoothly non-isotopic is called an \emph{exotic family}. Exotic surfaces have been studied extensively; for example, see \cite{Finashin-Kreck-Viro-1988, Fintushel-Stern-1997, Finashin-2002, Kim-2006, Finashin-2009, Mark-2013, Auckly-2019, Hoffman-Sunukjian-2020, Levine-Lidman-Piccirillo-2023, Baraglia-2024}. 

Notably, Miyazawa \cite{Miyazawa-2023} constructed an exotic family $\{P_n\}_{n=0}^\infty$ of surfaces in $S^4$, where $P_0$ is the standard $\mathbb{RP}^2$ with normal Euler number $2$. Baraglia \cite{Baraglia-2026} extended this result to show that, for a certain class of $4$-manifold-involution pairs $(X,c)$ with connected fixed loci $X^c$, there is an infinite exotic family of surfaces in the quotient $Q:=X/\langle c\rangle$ containing $B\# P_0$, where $B$ is the image of $X^c$ in $Q$ (see Theorem \ref{thm:Baraglia}). Our second result shows that, under the hypotheses of Theorem \ref{thm:main}, there is an infinite exotic family of surfaces in $Q$ containing $B$ itself, without taking a connected sum with $P_0$ or assuming $X^c$ is connected. 

\begin{theorem}\label{thm:isotopy}
    Let $(X,c)$ satisfy the hypotheses of Theorem \ref{thm:main}, and let $B$ be the image of the fixed locus $X^c$ in $Q:=X/\langle c\rangle$. Then there exists an infinite exotic family $\{B_n\}_{n=0}^\infty$ of surfaces in $Q$ with $B_0=B$. 
\end{theorem}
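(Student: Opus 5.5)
The plan is to get the surfaces $B_n$ from the involutions $c_n$ of Theorem~\ref{thm:main}, making sure that each $c_n$ comes from a modification supported near a single point of the fixed locus.

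\textbf{Step 1: the construction.} Following Miyazawa and Baraglia, I expect $c_n$ to be produced by an equivariant cut-and-paste near a fixed point $p\in X^c$. Concretely, remove a $c$-invariant ball around $p$ and reglue. Alternatively, write $X\cong X\#S^4$ and replace the standard involution on $S^4$ by a Miyazawa-type involution, using the exotic pairs $(S^4,P_n)$ and their double branched covers. Either way the quotient $X/\langle c_n\rangle$ is obtained from $Q$ by a local modification of the pair $(Q,B)$ inside a ball. The resulting branch surface $B_n\subset Q$ has double branched cover $(X,c_n)$ (after identifying the quotient with $Q$), and $B_0=B$.

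\textbf{Step 2: topological isotopy.} Since $X-X^c$ is simply connected, $\pi_1(Q-B)\cong\mathbb{Z}/2$. The complement of $B_n$ has the same fundamental group, because the local modification does not change it. Use the topological equivalence $c_n\simeq c$ from Theorem~\ref{thm:main}. It descends to a homeomorphism of pairs $(Q,B_n)\to(Q,B)$, which is the identity outside a ball. By the Alexander trick, or by Quinn/Perron-type isotopy results for homeomorphisms of simply connected $4$-manifolds acting trivially on homology, this homeomorphism is topologically isotopic to the identity. If that descent is delicate, I would instead use the known fact that the local knotted pieces are topologically unknotted rel boundary, i.e.\ topologically isotopic to the standard piece of $B$ in the ball. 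Either route shows the $B_n$ are pairwise topologically isotopic.

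\textbf{Step 3: smooth non-isotopy.} Suppose $f\colon Q\to Q$ is a diffeomorphism isotopic to the identity with $f(B_n)=B_m$. Then $f$ restricts to a diffeomorphism of complements $Q-B_n\to Q-B_m$. Since $\pi_1\cong\mathbb{Z}/2$, this lifts to the unique double covers. It therefore lifts to a diffeomorphism $\tilde f\colon X\to X$ with $\tilde f\circ c_n=c_m\circ\tilde f$. The lift is orientation-preserving after possibly composing with $c_m$, which itself preserves orientation. Hence $c_n$ and $c_m$ are smoothly equivalent, and Theorem~\ref{thm:main} forces $n=m$. Note that the isotopy hypothesis is not even needed here.

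\textbf{Main obstacle.} The hard step is Step~1, namely ensuring that the exotic involutions $c_n$ really arise from modifying $B$ in $Q$ while keeping the ambient quotient diffeomorphic to $Q$ by a map that is the identity outside a ball. I would extract this from the proof of Theorem~\ref{thm:main}, where the $c_n$ should be built by such a local regluing; the smooth inequivalence there is detected by the Real degree together with the condition $b_+^{-c}=(c_1^2-\sigma)/8$. Beyond that, only the topological isotopy in Step~2 requires a cited isotopy theorem.
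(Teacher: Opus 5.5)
Your Step 3 matches the paper's argument. A diffeomorphism of pairs lifts to the unique double covers of complements with $\pi_1\cong\mathbb{Z}_2$. All you need is that $f$ preserves orientation; composing with $c_m$ plays no role. Step 1 is not the real difficulty. The proof of Theorem \ref{thm:main} already gives $B_n=B\#n(\rho P)$, the branch set of $(X,c)\#n(\Sigma,\iota)$. Miyazawa's pairs $(S^4,P_n)$ would instead give $B\#P_0\#n(\rho P)$ with cover $X\#\CPbar$, which is Baraglia's setting.

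The genuine gap is in Step 2. Your claim that the descended homeomorphism $(Q,B_n)\to(Q,B)$ is the identity outside the modification ball is false. So is your fallback that the local pieces are topologically unknotted rel boundary. Either would give a homeomorphism rel boundary from the ball pair containing the punctured $n(\rho P)$ to the trivial ball--disk pair, so $n(\rho P)$ would be topologically unknotted in $S^4$. But $\pi_1(S^4-\rho P)$ is the group of $P$ with its longitude made central. This surjects onto $\pi_1(S^3_0(P))\not\cong\mathbb{Z}$, so it is not $\mathbb{Z}$. This is exactly why the paper needs the global hypothesis $\pi_1(Q-B)\cong\mathbb{Z}_2$. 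The complement group of $B\#\rho P$ is $\pi_1(S^4-\rho P)/\langle\langle\mu^2\rangle\rangle$, and its being $\mathbb{Z}_2$ is Miyazawa's special computation (Proposition \ref{prop:Miyazawa_fundamental_group}). It does not follow from locality, as you assert. The homeomorphism behind Theorem \ref{thm:topological_equivalence} is genuinely global, since it comes from the Kim--Ruberman surgery argument on the knot-surgered complement. So its action on $H_2(Q;\mathbb{Z})$ is a priori unknown. Perron--Quinn cannot be applied to it, nor to an arbitrary descended topological equivalence $c_n\simeq c$.

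The missing step is to produce a homeomorphism of pairs that acts as the identity on $H_2(Q;\mathbb{Z})$. The paper does this in Corollary \ref{cor:isotopy} by tracking the construction. The homeomorphism is $F\circ G^{-1}$, where $G\colon(Q_P(\varphi),B)\to(Q,B\#\rho P)$ is the Hughes--Kim--Miller diffeomorphism, equal to the identity outside a ball $D$. The map $F=h\cup\mathrm{id}_{\nu B}$ is built from the Kim--Ruberman homeomorphism $h$. By the surgery argument, $h$ is homotopic rel boundary to the homotopy equivalence $f$, which is the identity outside $\nu T\subset D$. Hence $F\circ G^{-1}$ is homotopic to a map that is the identity on $Q-\operatorname{int}D$, so it acts trivially on homology. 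Since $Q$ is simply connected, Perron--Quinn gives a topological isotopy to the identity carrying $B\#\rho P$ to $B$. Induction then yields Corollary \ref{cor:isotopy_general}.
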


Our proof actually shows the stronger result that the pairs $(Q,B_n)$ and $(Q,B_m)$ are not diffeomorphic for any $n\neq m$. 

The hypotheses of Theorem \ref{thm:main} can be verified in terms of the ordinary Seiberg--Witten invariant.
\begin{corollary}\label{cor:ordinary_SW_formulation}
    Let $X$ be a closed, oriented, non-spin, smooth $4$-manifold with an orientation-preserving smooth involution $c:X\to X$ such that the fixed locus $X^c$ is a nonempty surface and $X-X^c$ is simply-connected. Suppose that $b_+(X)\equiv 3\pmod{4}$ and that $X$ admits a spin$^c$ structure $\mathfrak{s}$ such that $c^*\mathfrak{s}\cong \overline{\mathfrak{s}}$, $\operatorname{SW}(X,\mathfrak{s})\equiv 1\pmod{2}$, and \[
    \frac{c_1(\mathfrak{s})^2-\sigma(X)}{8}=\frac{b_+(X)+1}{2}=b_+(X)^{-c}.
    \]
    Then the conclusions of Theorems \ref{thm:main} and \ref{thm:isotopy} hold.
\end{corollary}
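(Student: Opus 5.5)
The plan is to reduce Corollary \ref{cor:ordinary_SW_formulation} to Theorem \ref{thm:main}. We must produce a Real spin$^c$ structure on $(X,c)$ with $b_+(X)^{-c}=(c_1(\mathfrak{s})^2-\sigma(X))/8$ and $\deg_R(X,c,\mathfrak{s})\neq 0$. The numerical condition is given outright by hypothesis, so two things remain: lifting $\mathfrak{s}$ to a Real structure, and showing the Real degree is nonzero. Once these are done, Theorem \ref{thm:isotopy} follows as well, since its hypotheses are exactly those of Theorem \ref{thm:main}.

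\textbf{Step 1: the Real lift.} We have $c^*\mathfrak{s}\cong\overline{\mathfrak{s}}$ and the fixed locus $X^c$ is a nonempty surface. We can therefore invoke the existence criterion for Real structures from Section \ref{sec:RSW}, in the style of Nakamura and Konno--Miyazawa--Taniguchi. An isomorphism $\overline{\mathfrak{s}}\to c^*\mathfrak{s}$ gives an antilinear lift $I$ of $c$. Then $I^2$ is a gauge transformation covering the identity. Since it commutes with $I$ up to the induced action, it can be rescaled to $\pm1$. The sign is determined locally at a fixed point, and a nonempty two-dimensional fixed locus forces the sign that makes the structure Real, i.e.\ $I^2=1$. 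If Section \ref{sec:RSW} states such a lemma, we simply cite it.

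\textbf{Step 2: nonvanishing of $\deg_R$.} Here we use the relation between the Real degree and the ordinary Seiberg--Witten invariant mod $2$. The Real moduli space is the fixed set of the involution induced by $I$ on the ordinary moduli space $\mathcal{M}(X,\mathfrak{s})$. The ordinary virtual dimension is
\[
d=\frac{c_1(\mathfrak{s})^2-\sigma(X)}{4}-1-b_+(X)=(b_+(X)+1)-1-b_+(X)=0 .
\]
The Real virtual dimension is
\[
d_R=\frac{c_1(\mathfrak{s})^2-\sigma(X)}{8}-b_+(X)^{-c}=0 .
\]
We would use a $c$-invariant generic perturbation; the paper presumably justifies that this is possible, and one can use an equivariant transversality argument on the free part. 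The ordinary moduli space is then a finite set of points with an involution. Its non-fixed points come in pairs, and the count of fixed points is exactly the Real count. Hence $\deg_R\equiv \operatorname{SW}(X,\mathfrak{s})\equiv 1\pmod 2$, so $\deg_R\neq 0$.

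The condition $b_+(X)\equiv 3\pmod 4$ enters in two ways. It makes $(b_+(X)+1)/2$ even, so the relevant spaces are compatible, and it ensures $b_+(X)^{-c}$ and $b_+(X)^{c}=(b_+(X)-1)/2$ are of the right size. In particular, $b_+(X)^{-c}\ge 1$ lets us avoid reducibles in the Real theory, and the unperturbed invariant is well defined. If the paper has a stated comparison ``$\deg_R\equiv \operatorname{SW} \pmod 2$ when both dimensions vanish'', we cite it; otherwise we argue as above.

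\textbf{Main obstacle.} The main obstacle is Step 2. One must check that the perturbation can be chosen $I$-equivariant while remaining generic for the \emph{ordinary} equations, so that the ordinary count still equals $\operatorname{SW}(X,\mathfrak{s})$. We also need $b_+(X)\ge 2$, which holds since $b_+(X)\ge 3$, so that $\operatorname{SW}$ is independent of the chamber. The natural first attempt is to perturb by a generic $c$-anti-invariant self-dual form, which is possible because $b_+(X)^{-c}>0$. Transversality at non-fixed points can be obtained by perturbing on a neighborhood of a free orbit. At fixed solutions, the deformation complex splits into $\pm$ eigenparts. The Real part is made transverse by Real genericity, and the anti-Real part has the same index $d-d_R=0$, so one must argue that it is transverse too. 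This is the delicate point. If it fails, we would instead compute the mod $2$ count via the Bauer--Furuta / equivariant degree description, using the fact that a $\mathbb{Z}/2$-equivariant map's degree agrees mod $2$ with the degree of its fixed-point map.
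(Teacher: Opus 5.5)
Your overall reduction (lift $\mathfrak{s}$ to a Real spin$^c$ structure, show $\deg_R\neq 0$, then apply Theorems \ref{thm:main} and \ref{thm:isotopy}) is the paper's. The paper, however, does not reprove nonvanishing. Since $\pi_1(X-X^c)\to\pi_1(X)$ is surjective, $X$ is simply connected, so $b_1(X)=b_1(X)^{-c}=0$. You use this silently in Step 1 and in your dimension count. With $b_1(X)=0$, the hypotheses are exactly condition (3) of Baraglia's admissibility, and $(X,c)$ is strongly admissible. Proposition \ref{prop:admissible_pair} then directly supplies a Real spin$^c$ structure with $d(\mathfrak{s})=b_+(X)^{-c}\in 2\mathbb{Z}$ and $\deg_R\neq 0$.

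Your Step 2, which tries to prove this from scratch, contains a false step. The conclusion $\deg_R\equiv\operatorname{SW}(X,\mathfrak{s})\equiv 1\pmod 2$ cannot hold. Here $d(\mathfrak{s})=b_+(X)^{-c}=(b_+(X)+1)/2$ is positive and even, so by \eqref{eq:degree_and_SW} we have $|\deg_R(X,c,\mathfrak{s})|=2|\operatorname{SW}_{R,\mathbb{Z}}(X,c,\mathfrak{s})|$, which is even.

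Your pairing argument, granting transversality, counts points of $\mathcal{M}_R$. So it would prove $\operatorname{SW}_R(X,c,\mathfrak{s})\equiv\operatorname{SW}(X,\mathfrak{s})\pmod 2$, not a statement about the degree. The degree sees each Real moduli point twice: once for each of the Coulomb-slice representatives $(a,\pm\psi)$, equivalently the two fixed points of $I$ acting as a reflection on the corresponding $S^1$-orbit. The correct finish is $\operatorname{SW}_{R,\mathbb{Z}}\equiv\operatorname{SW}_R\equiv 1\pmod 2$, hence $|\deg_R|=2|\operatorname{SW}_{R,\mathbb{Z}}|\geq 2$.

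This is also the real role of $b_+(X)\equiv 3\pmod 4$, which you leave vague. It makes $d(\mathfrak{s})$ even, which is needed for $\mathcal{M}_R$ to be orientable. Without it, $\deg_R=0$ automatically when $d(\mathfrak{s})$ is odd.

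Your fallback is misformulated for the same reason. The ordinary Bauer--Furuta map has no degree: its source sphere has one more dimension than its target, since $2d(\mathfrak{s})-b_+(X)=1$, the extra direction being the $S^1$-orbit. In any case, a mod $2$ comparison with the even number $\deg_R$ cannot detect an odd $\operatorname{SW}$. What must be compared are $S^1$-orbits in a preimage, which $I$ either preserves or swaps.

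Even after this correction, the equivariant transversality you flag remains unproved, and it is the actual content of the step. Perturbations $\eta\in\Omega^+(X,i\mathbb{R})^{-c^*}$ directly control only the Real part of the cokernel. You still need an argument that at each Real solution the anti-Real part of the linearization is surjective; this part has index $1$, with the orbit direction $i\psi$ in its kernel.

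You also need non-Real solutions to be regular. ``Perturbing near a free orbit'' is not something global form perturbations $\eta$ can do. You would need a richer class of equivariant perturbations, for example $O(2)$-equivariant perturbations of a finite-dimensional approximation, where the stabilizer of a Real irreducible point is only $\{1,I\}$.

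As written, the proposal does not establish $\deg_R\neq 0$. Either supply that argument, or do as the paper does and cite Proposition \ref{prop:admissible_pair}.
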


Applying this criterion to the canonical spin$^c$ structure of an anti-symplectic involution gives the following corollary. 
\begin{corollary}\label{cor:symplectic}
    Let $(X,\omega)$ be a closed, non-spin, symplectic $4$-manifold with $b_+(X)\equiv 3\pmod{4}$, and let $c:X\to X$ be a smooth involution with $c^*\omega=-\omega$ such that the fixed locus $X^c$ is a nonempty surface and $X-X^c$ is simply-connected. Then the conclusions of Theorems \ref{thm:main} and \ref{thm:isotopy} hold, for the orientation on $X$ induced by $\omega$.
\end{corollary}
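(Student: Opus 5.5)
The plan is to reduce Corollary \ref{cor:symplectic} to Corollary \ref{cor:ordinary_SW_formulation}, applied to the canonical spin$^c$ structure $\mathfrak{s}_\omega$ of $(X,\omega)$. The topological hypotheses (non-spin, $b_+(X)\equiv 3 \pmod 4$, $X^c$ a nonempty surface, $X-X^c$ simply-connected) are assumed directly. What remains is to check three things: the conjugation symmetry $c^*\mathfrak{s}_\omega\cong\overline{\mathfrak{s}_\omega}$, the parity of the Seiberg--Witten invariant, and the numerical identity relating $c_1^2-\sigma$, $b_+$ and $b_+^{-c}$.

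\textbf{Conjugation symmetry.} Choose an $\omega$-compatible almost complex structure $J$ with $c^*J=-J$. Such a $J$ exists by averaging: the space of compatible $J$ is contractible, and $J\mapsto -c^*J$ is an involution of it, so a fixed point exists. For instance, average the metric $g$ to be $c$-invariant and take the $J$ obtained by polar decomposition from $(\omega,g)$. Then $dc$ is an anti-linear bundle isomorphism $(TX,J)\to c^*(TX,J)$. Hence $c^*K_X\cong \overline{K_X}$, and $c$ carries the canonical spin$^c$ structure $\Lambda^{0,*}$ to its conjugate, so $c^*\mathfrak{s}_\omega\cong\overline{\mathfrak{s}_\omega}$.

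\textbf{Parity of the invariant.} Since $b_+(X)\geq 3>1$, Taubes' theorem gives $\operatorname{SW}(X,\mathfrak{s}_\omega)=\pm1$, which is odd.

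\textbf{Numerical identity.} We have $c_1(\mathfrak{s}_\omega)^2=c_1(X)^2=2\chi(X)+3\sigma(X)$. With $b_1(X)=0$ (see below), this gives
\[
\frac{c_1^2-\sigma}{8}=\frac{2\chi+2\sigma}{8}=\frac{2+2b_2+2b_+-2b_-}{8}=\frac{1+b_+}{2}.
\]
For the eigenspace count, write $H^2_+=\mathbb{R}[\omega]\oplus (H^2_+\cap [\omega]^\perp)$ for a $c$-invariant metric; then $[\omega]$ lies in the $(-1)$-eigenspace. To compute $b_+^{-c}$ I would use the $G$-signature theorem for the involution,
\[
\sigma(X,c)=\operatorname{tr}(c^*|H^2_+)-\operatorname{tr}(c^*|H^2_-)=[X^c]^2,
\]
together with the Lefschetz fixed point formula $L(c)=2+\operatorname{tr}(c^*|H^2)=\chi(X^c)$.

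\textbf{Vanishing of $b_1$.} To use $b_1(X)=0$: the manifold $X-X^c$ is simply-connected, and $X^c$ has real codimension $2$, so removing it does not change $\pi_1$ being generated by its image. Hence $X$ is simply-connected.

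\textbf{The main obstacle.} The key step is proving $b_+^{-c}=(b_++1)/2$, equivalently $b_+^{+c}=(b_+-1)/2$. The fixed-point formulas give $b_+^{+c}-b_+^{-c}$ in terms of $b_-^{\pm c}$ and topological data of $X^c$, so they are not obviously sufficient on their own. I would first try to use the fact that $X^c$ is Lagrangian, because $c$ is anti-symplectic and $\omega|_{X^c}=0$. Consequently the normal bundle of $X^c$ is $J$-isomorphic to $TX^c$, which gives $[X^c]^2=-\chi(X^c)$. Substituting this into the two formulas, the Lefschetz relation $2+\operatorname{tr}(c^*|H^2)=\chi(X^c)=-\sigma(X,c)$ yields
\[
2+2\operatorname{tr}(c^*|H^2_+)=0,
\]
so $\operatorname{tr}(c^*|H^2_+)=-1$, that is, $b_+^{+c}-b_+^{-c}=-1$. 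Together with $b_+^{+c}+b_+^{-c}=b_+$, this gives $b_+^{-c}=(b_++1)/2$, as required. Therefore Corollary \ref{cor:ordinary_SW_formulation} applies, and the conclusions of Theorems \ref{thm:main} and \ref{thm:isotopy} follow.
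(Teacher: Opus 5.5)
Your argument is correct, but it takes a longer route than the paper. The paper observes that the hypotheses make $(X,c)$ a strongly admissible pair satisfying condition \ref{item:1}: it is anti-symplectic, $b_+(X)-b_1(X)\equiv 3\pmod 4$, and $b_1(X)^{-c}=0$ because $\pi_1(X)=1$. Proposition \ref{prop:admissible_pair} (Baraglia's Proposition 11.2) then directly supplies a Real spin$^c$ structure with $d(\mathfrak{s})=b_+(X)^{-c}$ even and $\deg_R\neq 0$.

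You instead check by hand that $\mathfrak{s}_\omega$ satisfies the hypotheses of Corollary \ref{cor:ordinary_SW_formulation}, which is condition \ref{item:3}:
\begin{itemize}
\item conjugation symmetry, from a compatible $J$ anti-commuting with $dc$;
\item oddness of $\operatorname{SW}$, from Taubes;
\item $d(\mathfrak{s}_\omega)=(b_+(X)+1)/2$, from $c_1^2=2\chi+3\sigma$;
\item $b_+(X)^{-c}=(b_+(X)+1)/2$, by adding the Lefschetz formula $2+\operatorname{tr}(c^*|H^2)=\chi(X^c)$ to the $G$-signature formula $\operatorname{tr}(c^*|H^2_+)-\operatorname{tr}(c^*|H^2_-)=e(\nu X^c)=-\chi(X^c)$ for the Lagrangian fixed surface. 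This forces $\operatorname{tr}(c^*|H^2_+)=-1$.
\end{itemize}

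That last computation is a clean, purely topological proof of the eigenspace count. The paper delegates it to Baraglia (Theorem \ref{thm:symplectic_degree}), and in the K\"ahler examples obtains it via Hodge theory (Proposition \ref{prop:conjugation}(b)). What your route does not buy is independence from Baraglia. The nonvanishing of $\deg_R$ still enters through Corollary \ref{cor:ordinary_SW_formulation}, which the paper proves via condition \ref{item:3} and Proposition \ref{prop:admissible_pair}. Alternatively, once your numerics are in place, Theorem \ref{thm:symplectic_degree} gives $|\deg_R(X,c,\mathfrak{s}_\omega)|=2$, and Theorem \ref{thm:main} applies at once.

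Minor points:
\begin{itemize}
\item In your notation $2\chi(X)=4+2b_2(X)$, not $2+2b_2(X)$. Your intermediate fraction is therefore wrong, although the final value $(1+b_+)/2$ is right.
\item Contractibility of the space of compatible $J$ does not by itself give a fixed point of $J\mapsto -c^*J$, since free involutions of contractible infinite-dimensional spaces exist. Your explicit polar-decomposition construction with a $c$-invariant metric does work.
\item You should record that $c$ preserves the $\omega$-orientation, since $c^*(\omega\wedge\omega)=\omega\wedge\omega$.
\end{itemize}
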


Recall that a smooth $4$-manifold $X$ is \emph{irreducible} if for every smooth connected sum decomposition $X=X_1\#X_2$, either $X_1$ or $X_2$ is a homotopy $4$-sphere. Our construction can produce an infinite exotic family of involutions on simply-connected, irreducible $4$-manifolds. For integers $r\geq 2$ and $m\geq 1$, consider the cyclic $r$-fold cover of $\mathbb{CP}^2$ branched along a nonsingular curve of degree $rm$, equipped with its complex orientation. Its oriented diffeomorphism type depends only on $r$ and $m$, and its holomorphic Euler characteristic is given by \[
\chi_{r,m}:=\sum_{k=0}^{r-1}\frac{(km-1)(km-2)}{2};
\]
see Section \ref{sec:branched_covers} for details. 
\begin{theorem}\label{thm:branched_cover}
   If $m$ and $\chi_{r,m}$ are both even, then there exists a simply-connected, irreducible complex surface $X_{r,m}$ of general type given by the cyclic $r$-fold cover of $\mathbb{CP}^2$ branched along a nonsingular curve of degree $rm$, admitting a K\"ahler form $\omega$ and an anti-holomorphic involution $c:X_{r,m}\to X_{r,m}$ such that the fixed locus is \[
        X_{r,m}^c\cong \begin{cases}
            \mathbb{RP}^2 & \text{if $r$ is odd}, \\
            \mathbb{RP}^2\amalg \mathbb{RP}^2 & \text{if $r$ is even},
        \end{cases}
        \]
    and $(X_{r,m},\omega,c)$ satisfies the hypotheses of Corollary \ref{cor:symplectic}. Consequently, the following statements hold. \begin{itemize}[leftmargin=2em]
       \item $X_{r,m}$ admits an infinite exotic family $\{c_n\}_{n=0}^\infty$ of involutions with $c_0=c$, such that every $c_n$ with $n\geq 1$ is not anti-symplectic with respect to any orientation-compatible symplectic form.
       \item There exists an infinite exotic family $\{B_{n}\}_{n=0}^\infty$ of surfaces in the quotient $Q_{r,m}:=X_{r,m}/\langle c\rangle$ such that $B_{0}$ is the image of the fixed locus $X_{r,m}^c$ in $Q_{r,m}$.
   \end{itemize}
\end{theorem}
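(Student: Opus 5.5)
The plan is to build the pair $(X_{r,m},c)$ explicitly, check each hypothesis of Corollary \ref{cor:symplectic}, and then read off the two bulleted conclusions from Theorems \ref{thm:main} and \ref{thm:isotopy}.

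\textbf{Construction.} Choose a real nonsingular curve $C=\{f=0\}\subset\mathbb{CP}^2$ of degree $d=rm$ with $f$ a real homogeneous polynomial. Take $X_{r,m}=\{w^r=f(x,y,z)\}$ in the weighted projective space $\mathbb{P}(1,1,1,m)$, and let $c(x,y,z,w)=(\bar x,\bar y,\bar z,\bar w)$, or a twist of it such as $w\mapsto \zeta\bar w$. This $c$ is anti-holomorphic, so $c^*\omega=-\omega$ for a real Kähler form $\omega$, for instance one pulled back from an ample real class. Simple connectivity of $X_{r,m}$ is classical for such covers. Irreducibility follows because $X_{r,m}$ is a minimal surface of general type when $m(r-1)>3$ or so. Such surfaces have nonzero Seiberg--Witten invariants and $b_+>1$, and are therefore irreducible by standard results. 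Small cases with $\chi_{r,m}$ even should be excluded or handled separately.

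\textbf{Fixed locus.} The fixed locus lies over $\mathbb{RP}^2$, since $c$ covers conjugation on $\mathbb{CP}^2$. Over a real point we need $w^r=f$ with $w$ real. Because $d=rm$ with $m$ even, $d$ is even, so the sign of $f$ on $\mathbb{RP}^2$ is well-defined. Choose $f$ with empty real locus and $f>0$ on $\mathbb{RP}^2$, e.g.\ a small real perturbation of $(x^2+y^2+z^2)^{d/2}$. The fiber over each real point is then one real root when $r$ is odd and two roots $\pm$ when $r$ is even. This gives $\mathbb{RP}^2$ and $\mathbb{RP}^2\amalg\mathbb{RP}^2$ respectively. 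Evenness of $m$ also makes the weight compatible, so $w$ is a well-defined real coordinate over $\mathbb{RP}^2$.

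\textbf{Remaining hypotheses.}
\begin{itemize}[leftmargin=2em]
\item \emph{Non-spin.} $K=\pi^*\mathcal{O}(-3+m(r-1))$ with $H^2=r$. One checks $w_2$ nonzero via parity: $m$ even makes $m(r-1)-3$ odd, and $H$ is characteristic-relevant.
\item \emph{$b_+$ condition.} For a surface of general type $b_+=2p_g+1$, so $b_+\equiv 3\pmod 4$ amounts to $p_g$ odd. Since $q=0$, $\chi=1+p_g$, so this is exactly $\chi_{r,m}$ even. I would verify that $\chi(\mathcal{O})=\sum_k h^0$-type counts gives the stated formula for $\chi_{r,m}$.
\item \emph{Simple connectivity of $X-X^c$.} This is the main obstacle. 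I would use the quotient $Q=X/\langle c\rangle$. The complement $X-X^c$ is a double cover of $Q-B$, so it suffices to show $\pi_1(Q-B)\cong\mathbb{Z}/2$ generated by a meridian. First try a Lefschetz-pencil or branched-cover argument. $Q$ is a branched cover of $\mathbb{CP}^2/\mathrm{conj}\cong S^4$ branched along the image of $\mathbb{RP}^2$ and along the image of $C$ (real structure with $C(\mathbb{R})=\emptyset$). This makes $\pi_1$ of the complement computable via van Kampen, since $\pi_1(S^4-\mathbb{RP}^2)=\mathbb{Z}/2$ and $\pi_1$ of the complement of $C/\mathrm{conj}$ is abelian-ish by Zariski-type results. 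Alternatively, relate it to known results: for anti-holomorphic involutions with $\pi_1(X)=1$, the quotient is simply connected, and the complement condition holds when the fixed locus has a component with $w_1$ nontrivial meeting meridian conditions, i.e.\ the meridian of $\mathbb{RP}^2$ is killed appropriately.
\end{itemize}

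Finally, apply Corollary \ref{cor:symplectic} to obtain both bullets.
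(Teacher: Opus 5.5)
Your fixed-locus, $b_+$ and irreducibility arguments are fine. The gap is the step you flag yourself as the main obstacle: you never prove that $X_{r,m}-X_{r,m}^c$ is simply connected, and this is the real content of the theorem.

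The routes you sketch do not work as stated.
\begin{itemize}
\item The map $Q\to\mathbb{CP}^2/\mathrm{conj}=S^4$ is not a branched cover along the images of $\mathbb{RP}^2$ and the branch curve. For $r=2$ both lifts of $\mathbb{RP}^2$ are fixed by $c$, so $Q\to S^4$ is a double cover branched only along the image of the branch curve. For $r\geq 3$ it is a non-Galois degree-$r$ map, since the deck group together with $c$ generates a dihedral group.
\item Zariski- or Nori-type abelianness results concern curve complements in projective surfaces, not in $S^4-\mathbb{RP}^2$ or $\mathbb{CP}^2-\mathbb{RP}^2$.
\item Your ``alternative'' criterion is circular: it only restates that the meridians of the fixed components must die.
\item An unspecified small real perturbation of $(x^2+y^2+z^2)^{d/2}$ gives you nothing concrete to compute with.
\end{itemize}

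The missing idea is to work downstairs in $\mathbb{CP}^2$ and to choose the perturbation so that a degeneration argument applies.

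First, $X-X^c$ is obtained from $\pi^{-1}(\mathbb{CP}^2-\mathbb{RP}^2)$ by adding back the codimension-two lifts of $\mathbb{RP}^2$ that are not fixed by $c$. So Lemma~\ref{lem:simply_connected_branched_cover} reduces everything to showing that $\pi_1(\mathbb{CP}^2-(\mathbb{RP}^2\cup B))\cong\mathbb{Z}_{2rm}$, generated by a meridian of $B$.

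Your starting point is right, but the paper fixes the perturbation precisely. It takes $F=Q^d+\epsilon^d\ell_1\cdots\ell_{2d}$, where $Q=z_0^2+z_1^2+z_2^2$, $rm=2d$, and the $\ell_j$ are real lines whose product has $4d$ simple zeros on the conic $C_0=\{Q=0\}$. The argument then runs as follows.
\begin{itemize}
\item The gradient flow of $|Q|/\|z\|^2$ retracts $\mathbb{CP}^2-\mathbb{RP}^2-B$ onto $T-B$, where $T$ is a thin neighbourhood of $C_0$.
\item The family $Q=tv$ identifies $(T,B)$ with a disk bundle in $\mathcal{O}(2)|_{C_0}\cong\mathcal{O}_{\mathbb{P}^1}(4)$ containing the smooth curve $D=\{v^{\otimes d}+P=0\}$.
\item Compactifying to $\mathbb{F}_4$ and applying Proposition~\ref{prop:Nori} (with $[D]^2=4d^2>0$ and $E$ the section at infinity) shows that the complement has abelian fundamental group.
\item The intersection number $[C_0]\cdot[D]=4d$ then gives $\mathbb{Z}_{4d}$, generated by a meridian.
\end{itemize}

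There is also a smaller gap in your non-spin argument. From $w_2\equiv H\pmod 2$ and $H^2=r$ you can conclude $H\notin 2H^2(X;\mathbb{Z})$ only when $r\not\equiv 0\pmod 4$. However, $r\equiv 0\pmod 4$ is allowed here, e.g.\ $r=4$, $m=2$. The paper handles this by proving $H$ is primitive. It uses an auxiliary $F$ whose restriction to a line is a perfect $r$-th power, which gives a holomorphic sphere $S$ with $\langle H,[S]\rangle=1$.
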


A second family of examples is provided by smooth complete intersection surfaces in $\mathbb{CP}^4$. Recall that a surface in $\mathbb{CP}^4$ is said to be a \emph{complete intersection} of multidegree $(d,d')$ if it is given by $\{p=p'=0\}$ for some homogeneous polynomials $p$ and $p'$ of degrees $d$ and $d'$, respectively, such that their gradients are linearly independent at every nonzero common zero in $\mathbb{C}^5$. Its oriented diffeomorphism type depends only on $d$ and $d'$.

\begin{theorem}\label{thm:complete_intersection}
For each integer $k\geq 1$, there exists a simply-connected, irreducible complex surface $Y_k$ of general type given by a complete intersection in $\mathbb{CP}^4$ of multidegree $(2,4k)$, admitting a K\"ahler form $\omega$ and an anti-holomorphic involution $c:Y_k\to Y_k$ such that the fixed locus $Y_k^c$ is a $2$-sphere and $(Y_k,\omega,c)$ satisfies the hypotheses of Corollary \ref{cor:symplectic}. Consequently, the following statements hold. \begin{itemize}[leftmargin=2em]
    \item $Y_k$ admits an infinite exotic family $\{c_n\}_{n=0}^\infty$ of involutions with $c_0=c$, such that every $c_n$ with $n\geq 1$ is not anti-symplectic with respect to any orientation-compatible symplectic form. 
    \item There is an infinite exotic family $\{B_{n}\}_{n=0}^\infty$ of $2$-spheres in the quotient $Q_k:=Y_k/\langle c\rangle$ such that $B_{0}$ is the image of the fixed locus $Y_k^c$ in $Q_k$. 
\end{itemize}
\end{theorem}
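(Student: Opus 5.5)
The plan is to build an explicit real complete intersection $Y_k$ and verify the hypotheses of Corollary \ref{cor:symplectic} for it. The anti-symplectic involution will be complex conjugation, with the Kähler form restricted from Fubini--Study. Once the hypotheses are checked, both bulleted conclusions follow formally from Corollary \ref{cor:symplectic} together with Theorems \ref{thm:main} and \ref{thm:isotopy}.

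\textbf{Invariants.} Take $Y=\{p=p'=0\}$ with $p,p'$ real polynomials of degrees $2$ and $4k$, and let $c$ be the restriction of complex conjugation, so $c^*\omega=-\omega$. Let $H$ be the hyperplane class.
\begin{itemize}[leftmargin=2em]
\item Adjunction gives $K_Y=(4k-3)H$, so $K^2=8k(4k-3)^2$ and $Y$ is of general type for $k\ge1$.
\item By Lefschetz, $Y$ is simply connected.
\item Since $K$ is an odd multiple of $H$ and $H^2=8k$, the parity of $w_2$ can be read off. Non-spin requires $w_2=K \bmod 2\neq 0$. As $H$ is primitive (Lefschetz), $K\equiv H \bmod 2$, so $Y$ is non-spin.
\item Compute $\chi_h=(K^2+e)/12$ from the Chern classes $c(Y)=(1+H)^5(1+2H)^{-1}(1+4kH)^{-1}$. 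Then $b_+=2p_g+1$, and I expect $p_g$ to be odd, which gives $b_+\equiv 3 \pmod 4$. This is a direct, routine check that I will carry out explicitly.
\item Irreducibility follows from minimality of general-type surfaces together with Kotschick/Hamilton--Kotschick: simply connected minimal surfaces of general type are irreducible.
\end{itemize}

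\textbf{Choosing the real structure.} The fixed locus is the real part $\{p=p'=0\}\cap\mathbb{RP}^4$, and it must be a $2$-sphere. I would take a quadric $p=x_0^2-x_1^2-x_2^2-x_3^2-\epsilon x_4^2$ whose real locus is a $3$-sphere (or its projective image, which is $S^3$ in $\mathbb{RP}^4$). Then I choose $p'$ so that its real zero set cuts that $S^3$ in a single $2$-sphere, for instance a small real perturbation of $\ell^{4k}-\delta g$, with $\ell$ a linear form and $g$ positive on the real locus. Genericity (Bertini over $\mathbb{R}$) keeps $Y$ smooth.

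\textbf{Main obstacle: simple connectivity of $Y-Y^c$.} I expect this to be the hardest hypothesis. My first approach is to view $Y-Y^c\to (Y-Y^c)/c$ and aim to show $\pi_1(Q-B)=\mathbb{Z}/2$, generated by a meridian of $B$. Then the double cover is simply connected: the meridian lifts to a loop bounding a disk in a normal slice. To compute $\pi_1(Q-B)$, I would degenerate $p'$ to $\ell^{4k}$, or alternatively use the Finashin/Kharlamov-type principle that for real surfaces with $H_1(Q)=0$ one obtains $\pi_1(Q-B)=\mathbb{Z}/2$. Here $Q$ is simply connected because $c$ has fixed points and $Y$ is simply connected, so $\pi_1(Q-B)$ is normally generated by a meridian $\mu$ with $\mu^2$ trivial upstairs. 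What remains is to show that the double cover $Y-Y^c$ has no further fundamental group. A Mayer--Vietoris argument over a neighborhood of $Y^c\cong S^2$ reduces this to $\pi_1$ of a boundary lens space, with $H_1(Y-Y^c)=0$ coming from the Smith exact sequence. As a fallback, Seifert--van Kampen with the normal bundle of $S^2$ shows that $\pi_1(Y-Y^c)$ is normally generated by a meridian, which is null-homotopic upstairs since $Y^c$ is fixed and the meridian of the downstairs loop squares to it.
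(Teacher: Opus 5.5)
\textbf{Verdict.} There is a genuine gap: you do not establish the hypothesis $\pi_1(Y-Y^c)=1$, and none of your three arguments can prove it.

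Your van Kampen observation is correct as far as it goes. Since $\pi_1(Y)=1$, the group $\pi_1(Y-Y^c)$ is normally generated by a meridian $\tilde\mu$ of $Y^c$, and granting $H_1(Y-Y^c)=0$ it is perfect. But a perfect group normally generated by an element of order at most $2$ need not be trivial. So no homological input (Smith sequence, Mayer--Vietoris over $\nu Y^c$, $H_1(Q)=0$) can finish the job. There is also no principle that $H_1(Q)=0$ forces $\pi_1(Q-B)\cong\mathbb{Z}/2$; only the abelianization $H_1(Q-B)\cong\mathbb{Z}/2$ is automatic. Your fallback claims that $\mu^2$, whose lift is $\tilde\mu$, is null-homotopic in $Y-Y^c$ ``since $Y^c$ is fixed''. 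That is exactly equivalent to the statement being proved, so the argument is circular. The degeneration of $p'$ to $\ell^{4k}$ is not developed into an argument at all.

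\textbf{Missing idea: a geometric dual sphere.} The paper chooses $q=-z_0^2+z_1^2+z_2^2+z_3^2+z_4^2$ and $F_m=z_4(\lambda_m z_0^{m-1}+z_4^{m-1})+(z_0-z_1)P_m$ so that the complex line $L=\{z_4=0,\ z_0=z_1,\ z_2=iz_3\}$ lies in $X_m$. This line meets the real locus transversely in the single point $[1:1:0:0:0]$. Then $L$ minus a small disk is a disk in $X_m-X_m^c$ bounded by $\tilde\mu$, which kills $\pi_1$.

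Forcing such a curve into $Y$ does not fit your ``generic real perturbation plus Bertini'' route to smoothness, since a generic perturbation destroys $L\subset Y$. This is why the paper proves smoothness of its explicit surface by hand. It uses a $\mathbb{Q}$-derivation $D$ with $D(\lambda_m)=1$, which exists because $\lambda_m$ is transcendental.

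\textbf{Your sample equation fails the fixed-locus requirement.} Since $4k$ is even, the real zero set of $\ell^{4k}-\delta g$ on the real $S^3$ is $\{\ell=\pm(\delta g)^{1/(4k)}\}$. That is two disjoint $2$-spheres, not one. You need a dominant term that is odd in a linear form, as with $\lambda_m z_4z_0^{m-1}$ in the paper; its real locus is a graph over $\{x_4=0\}\cap S^3\cong S^2$.

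\textbf{The rest is fine.} Non-spin, general type, minimality and irreducibility all go through as you describe. Your deferred parity check also works: $\chi_h=2k(32k^2-36k+13)/3$ is always even, so $b_+\equiv 3\pmod{4}$.
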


\begin{remark}
(1) In both Theorems \ref{thm:branched_cover} and \ref{thm:complete_intersection}, although each $c_n$ for $n\geq 1$ is not anti-symplectic with respect to any orientation-compatible symplectic form, we have $c_n^*[\omega]=-[\omega]$ on cohomology for the K\"ahler form $\omega$; see Remarks \ref{rmk:cohomology_antisymplectic} and \ref{rmk:complete_antisymplectic}.

(2) In Theorem \ref{thm:branched_cover}, the condition that both $m$ and $\chi_{r,m}$ are even is equivalent to either $m\equiv 0\pmod{4}$ and $r\equiv 0 \pmod{2}$, or $m\equiv 2\pmod{4}$ and $r\equiv 0,3\pmod {4}$; see Remark \ref{rmk:chi_r,m}. For example, Theorem \ref{thm:branched_cover} applies to the double cover $X_{2,4k}$ of $\mathbb{CP}^2$ branched along a nonsingular curve of degree $8k$.

(3) The oriented homeomorphism types of $X_{r,m}/\langle c\rangle$ and $Y_k/\langle c\rangle$ are given in Propositions \ref{prop:homeomorphism_type} and \ref{prop:quotient_homeo_type_complete_intersection}, respectively.
\end{remark}

Next, we consider equivariant connected sums. For comparison, we first recall Baraglia's results. For a closed, oriented, smooth $4$-manifold $X$ with an orientation-preserving smooth involution $c:X\to X$, Baraglia \cite[Definition 1.1]{Baraglia-2026} defined the notion of \emph{admissibility} for the pair $(X,c)$; see Section \ref{sec:main_proofs} for the precise definition. Baraglia proved that an equivariant connected sum of admissible pairs admits an infinite exotic family after one additional equivariant blow-up:
\begin{theorem}[{\cite[Theorem 1.2]{Baraglia-2026}}]\label{thm:Baraglia}
    Let $(X_1,c_1),\dots,(X_k,c_k)$ be admissible pairs, and let $\kappa$ denote complex conjugation on $\CPbar$. Form the equivariant connected sum \[
    (X,c):=(X_1\# \cdots \# X_k, c_1\# \cdots \# c_k)
    \]
    at fixed points. Then $X\# \CPbar$ admits an infinite exotic family of involutions containing $c\# \kappa$. Furthermore, if every fixed locus $X_i^{c_i}$ is connected, then there exists an infinite exotic family of surfaces in the quotient $Q:=X/\langle c\rangle$ containing $B\# P_0$, where $B$ is the image of $X^c$ in $Q$ and $P_0\subset S^4$ is the standard $\mathbb{RP}^2$ with normal Euler number $2$.
\end{theorem}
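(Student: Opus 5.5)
The plan is to import Miyazawa's exotic $\mathbb{RP}^2$'s into the quotient of $X$ by a connected sum, and to separate the resulting pairs by a Real Seiberg--Witten gluing argument.

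\textbf{Construction.} Let $P_0\subset S^4$ be the standard $\mathbb{RP}^2$ with normal Euler number $2$. The double cover of $S^4$ branched along $P_0$ is $\CPbar$, and its covering involution is $\kappa$. Miyazawa's surfaces $P_n\subset S^4$ are all topologically isotopic to $P_0$. Let $(Z_n,\kappa_n)$ denote the double branched cover along $P_n$. By the diffeomorphism result of Hughes--Kim--Miller, each $Z_n$ is diffeomorphic to $\CPbar$. Now put $Q=X/\langle c\rangle$ and $B$ the image of $X^c$, and consider the pairs $(Q\# S^4,\,B\# P_n)=(Q,\,B\# P_n)$, the sum taken at points of $B$ and $P_n$. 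Their branched double covers are the equivariant sums
\[
(X\# Z_n,\; c\# \kappa_n)\cong (X\#\CPbar,\; c_n'),
\]
which gives the candidate family $c_n'$ on $X\#\CPbar$ and $B\# P_n$ in $Q$. Each $X_i^{c_i}$ is connected by hypothesis, so $B$ has one component in each summand of $Q$ and is connected. Hence the point at which we sum is immaterial, and $B\# P_n$ is well defined.

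\textbf{Topological equivalence.} I would start from a topological isotopy of $S^4$ carrying $P_n$ to $P_0$ and lift it to a homeomorphism of branched covers $(Z_n,\kappa_n)\to(\CPbar,\kappa)$ that commutes with the involutions. Performing the isotopy away from a small ball around the summing point lets it extend by the identity over the $X$-side. This makes all $c_n'$ topologically equivalent, and the $B\# P_n$ topologically isotopic. Locally flat regularity of the surfaces is enough for the lift.

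\textbf{Smooth distinction.} I would use Real Seiberg--Witten invariants of the branched covers. Admissibility of each $(X_i,c_i)$ should supply a Real spin$^c$ structure whose $b_+^{-c}$ matches the index of the Real Dirac operator and whose Real degree is nonzero. A Real gluing/connected-sum formula, in the spirit of the blow-up formula, then computes the Real invariant of $(X\#Z_n,c\#\kappa_n)$ for Real spin$^c$ structures of the form $\mathfrak{s}\#\mathfrak{t}$. The factor $\deg_R(X,c,\mathfrak{s})\neq 0$ is fixed, so the $n$-dependence sits entirely in the $Z_n$-side. Miyazawa's computation shows this side depends on $n$: the set of classes on which the invariant of $(Z_n,\kappa_n)$ is nonzero changes with $n$, for example by a characteristic class whose square grows with $n$.

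Suppose an equivariant diffeomorphism $c_n'\sim c_m'$ existed. It would preserve the collection of Real basic classes up to the action on $H^2$. Comparing squares of basic classes, or the count of them, forces $n=m$. The same argument rules out diffeomorphisms of pairs $(Q,B\# P_n)\cong(Q,B\# P_m)$ by passing to branched covers, which yields the exotic surfaces.

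\textbf{Main obstacle.} I expect the hardest step to be the Real gluing formula across the neck $S^3$ through a fixed point. One must check that the reducible solutions on the $\CPbar$-side contribute exactly as in the ordinary blow-up formula, with the Real conditions, including the parity conventions for $b_+^{-c}$, being compatible. Since orientations of Real moduli spaces may be unavailable, the count should be taken mod $2$ or as a degree as Baraglia does. I would first attempt this by a neck-stretching argument modeled on the Fintushel--Stern proof of the blow-up formula, adapted to $\mathbb{Z}/2$-equivariant configurations.
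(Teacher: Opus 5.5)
Your construction and the topological half follow the route the paper attributes to Baraglia: $P_n=P_0\#n(\rho P)$, so $Z_n=\CPbar\#n\Sigma\cong\CPbar$ by Hughes--Kim--Miller, and topological equivalence comes from lifting Miyazawa's isotopy. The smooth distinction as you describe it, however, does not work.

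Because $\Sigma$ is a homotopy sphere, $H^2(Z_n;\mathbb{Z})=H^2(\CPbar;\mathbb{Z})$ with the same intersection form. Because $(\Sigma,\iota)$ has a unique Real spin$^c$ structure $\mathfrak{s}_\Sigma$ (Theorem~\ref{thm:Miyazawa_sphere}), the map $\mathfrak{t}\mapsto \mathfrak{t}\# n\mathfrak{s}_\Sigma$ is a $c_1$-preserving bijection of Real spin$^c$ structures, and this still holds after summing with $(X,c)$ (Corollary~\ref{cor:equivariant_connected_sum_degree}). So the set of classes with nonzero invariant, their number, and their squares are all independent of $n$. There is no ``characteristic class whose square grows'', and comparing basic classes or counting them can never force $n=m$. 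The only $n$-dependence is in the \emph{size} of the invariant. Since $|\deg_R(\Sigma,\iota,\mathfrak{s}_\Sigma)|=3$, one gets $|\deg_R(X\#Z_n,c\#\kappa_n,\mathfrak{s}\#\mathfrak{t}\#n\mathfrak{s}_\Sigma)|=3^n|\deg_R(X,c,\mathfrak{s})|$. For the same reason your fallback of counting mod $2$ fails outright, because $3^n$ is odd.

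The missing idea is to use the integer-valued Real degree, together with an invariant of the equivariant diffeomorphism type that records its magnitude. One choice is the invariant $m(\,\cdot\,)$ of Section~\ref{sec:main_proofs}: the minimum of the nonzero $|\deg_R|$ over Real spin$^c$ structures with $d(\mathfrak{s})=b_+^{-c}$. Admissibility (Proposition~\ref{prop:admissible_pair}) together with the product and blow-up formulas gives $m(X\#\CPbar,c\#\kappa)>0$. Then $m(X\#Z_n,c\#\kappa_n)=3^n\, m(X\#\CPbar,c\#\kappa)$ separates the family.

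There are three smaller points. First, the gluing formula you single out as the main obstacle is already available as a product formula for Real degrees (Theorem~\ref{thm:connected_sum_formula}). It is proved at the level of the finite-dimensional approximations rather than by neck-stretching, and the $\CPbar$ factor simply contributes $|\deg_R(\CPbar,\kappa,\mathfrak{t})|=1$.

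Second, for the surfaces, ``passing to branched covers'' needs care. When $\pi_1(X-X^c)\neq 1$, the complement $Q-(B\#P_n)$ may carry several double covers branched along $B\#P_n$. A diffeomorphism $(Q,B\#P_n)\cong(Q,B\#P_m)$ then need not lift to $X\#Z_n\to X\#Z_m$. One remedy is the finiteness argument in the proof of Theorem~\ref{thm:connected_sum}: there are only finitely many branched double covers, so one can extract infinitely many pairwise inequivalent pairs $(Q,B\#P_{n_j})$.

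Third, the isotopy from $P_n$ to $P_0$ must be made the identity near the summing point before it extends over the $X$-side. Miyazawa's statement does not give this directly; it requires a relative or local adjustment.
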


Motivated by Theorem \ref{thm:main}, let us call an admissible pair $(X,c)$ \emph{strongly admissible} if $X$ is non-spin and the complement $X-X^c$ of the fixed locus $X^c$ is simply-connected. Any admissible pair $(X,c)$ admits a Real spin$^c$ structure $\mathfrak{s}$ satisfying \[
    b_+(X)^{-c}=\frac{c_1(\mathfrak{s})^2-\sigma(X)}{8} \quad \text{and} \quad \deg_R(X,c,\mathfrak{s})\neq 0
    \]
by \cite[Proposition 11.2]{Baraglia-2026}. Therefore, a strongly admissible pair $(X,c)$ satisfies the hypotheses of Theorem \ref{thm:main}. Our next result shows that if at least one of the admissible summands is strongly admissible, the additional $\CPbar$ summand on $X$ and the additional $P_0$ summand on $B$ in Theorem \ref{thm:Baraglia} can be omitted. 

\begin{theorem}\label{thm:connected_sum}
    Let $(X_1,c_1),\dots,(X_k,c_k)$ be admissible pairs, and suppose that at least one $(X_i,c_i)$ is strongly admissible. Form the equivariant connected sum \[
    (X,c):=(X_1\# \cdots \# X_k, c_1\# \cdots \# c_k)
    \]
    at fixed points. Then $X$ admits an infinite exotic family of involutions containing $c$. Furthermore, if every fixed locus $X_i^{c_i}$ is connected, then there exists an infinite exotic family of surfaces in the quotient $Q:=X/\langle c\rangle$ containing $B$, where $B$ is the image of $X^c$ in $Q$.
\end{theorem}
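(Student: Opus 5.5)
The plan is to reduce Theorem \ref{thm:connected_sum} to Theorem \ref{thm:main} and Theorem \ref{thm:isotopy}, applied directly to the equivariant connected sum $(X,c)$. There are three things to check: the topological hypotheses on $(X,c)$, the existence of a Real spin$^c$ structure with the dimension condition, and nonvanishing of the Real degree.

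\emph{Topological hypotheses.} Say $(X_1,c_1)$ is strongly admissible.
\begin{itemize}[leftmargin=2em]
\item $X$ is non-spin, since $X_1$ is non-spin and the intersection form of $X$ contains that of $X_1$ as an orthogonal summand.
\item The fixed locus $X^c$ is the equivariant connected sum of the fixed surfaces $X_i^{c_i}$ at the chosen fixed points, so it is a nonempty surface.
\item For $\pi_1(X-X^c)$, decompose $X-X^c$ as the union of the punctured complements $X_i-X_i^{c_i}-D_i$ glued along $S^3$ minus an unknotted circle. The gluing region is homotopy equivalent to $S^1\times D^2$. Van Kampen then expresses $\pi_1(X-X^c)$ as an amalgamation in which each meridian of the fixed locus is identified with the meridians of the others.
\item In $X_1-X_1^{c_1}$ the meridian is trivial, since this complement is simply connected. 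Hence the common meridian is trivial in $\pi_1(X-X^c)$.
\item For $i\ge2$, $\pi_1(X_i-X_i^{c_i})$ need not vanish. I will use that for an admissible pair this group is normally generated by meridians. I expect this from Baraglia's admissibility definition (which should force $X_i/c_i$ simply connected, with $\pi_1$ of the branched complement generated by meridians), or from a direct Smith-theory argument.
\end{itemize}
Given this, killing the common meridian kills every factor, so $X-X^c$ is simply connected. This step is the main obstacle: I must extract the needed property from the precise definition of admissibility. If it fails, I would instead use the fact that only the $\mathbb Z/2$ deck quotient of the orbifold fundamental group is relevant, and argue via the quotient $Q$.

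\emph{Real spin$^c$ structure and dimension condition.} By \cite[Proposition 11.2]{Baraglia-2026}, each $(X_i,c_i)$ carries a Real spin$^c$ structure $\mathfrak s_i$ with $b_+(X_i)^{-c_i}=(c_1(\mathfrak s_i)^2-\sigma(X_i))/8$ and $\deg_R\ne0$. Real spin$^c$ structures glue along an equivariant connected sum at fixed points, giving a structure $\mathfrak s$ on $X$. The quantities $c_1^2$, $\sigma$ and $b_+^{-c}$ are all additive: the neck $S^3$ carries positive scalar curvature and contributes nothing to cohomology. So $\mathfrak s$ satisfies the dimension condition of Theorem \ref{thm:main}.

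\emph{Nonvanishing of the Real degree.} I expect a Real connected sum formula, presumably established earlier in the paper or in \cite{Baraglia-2026}, saying that $\deg_R$ is multiplicative under equivariant connected sum at fixed points when each summand satisfies the dimension condition. In the Bauer--Furuta picture, the Real degree is the equivariant degree of a map between representation spheres, and the stable homotopy class of the glued monopole map is the smash product of the summands' classes. The degree of a smash product is the product of the degrees, so $\deg_R(X,c,\mathfrak s)=\prod_i\deg_R(X_i,c_i,\mathfrak s_i)\ne0$, and Theorem \ref{thm:main} gives the exotic family of involutions. If only mod $2$ or over a field is available, I would work mod $2$ throughout, since Baraglia's admissible summands have odd degree.

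\emph{Surfaces.} Theorem \ref{thm:isotopy} needs no connectedness assumption, so it directly produces the exotic family of surfaces $B_n$ in $Q$ with $B_0=B$. The connectedness hypothesis in Theorem \ref{thm:connected_sum} is only retained for parity with Theorem \ref{thm:Baraglia}.
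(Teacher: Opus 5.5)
Your reduction to Theorem~\ref{thm:main} fails at the step you flagged, and it cannot be repaired: in general $X-X^c$ is \emph{not} simply connected.

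Admissibility puts no condition on $\pi_1(X_i)$. It only requires $b_1(X_i)^{-c_i}=0$ and a nonempty fixed surface. Conditions (1) and (3) explicitly allow $b_1(X_i)\neq 0$, which is why $b_+(X)-b_1(X)$ appears there, and in condition (5) nothing prevents $\pi_1(N)\neq 1$. Since $X^c$ has codimension two, $\pi_1(X-X^c)\to\pi_1(X)\cong\pi_1(X_1)*\cdots*\pi_1(X_k)$ is surjective. So as soon as one summand is not simply connected, $(X,c)$ violates the hypotheses of Theorems~\ref{thm:main} and~\ref{thm:isotopy}.

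Your van Kampen computation, done correctly for connected fixed loci, actually gives $\pi_1(X-X^c)\cong\pi_1(X)$. Killing the meridian of $X_i^{c_i}$ in $\pi_1(X_i-X_i^{c_i})$ leaves $\pi_1(X_i)$, not the trivial group. For disconnected fixed loci, only the meridians of the components used in the sums are identified. So your argument covers only the case where every $X_i$ is simply connected with connected fixed locus.

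Passing to the quotient does not rescue this. The ingredients of Theorem~\ref{thm:main} need $\pi_1(Q-B)\cong\mathbb{Z}_2$, both for the Kim--Ruberman argument with finite cyclic $\pi_1$ and for the lifting step. They also need $\pi_1(X)=1$, for Proposition~\ref{prop:Gluck_twist} via Akbulut--Yasui.

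The missing idea is to localize the construction on the strongly admissible summand, say $(X_1,c_1)$, rather than on $X$. Sum the copies of $(\Sigma,\iota)$ into $X_1$ away from the balls where the other summands are attached. Proposition~\ref{prop:Gluck_twist} applies to the simply connected non-spin $X_1$, giving a diffeomorphism from the whole sum to $X$. Theorem~\ref{thm:topological_equivalence} applies to $(X_1,c_1)$. Remark~\ref{rmk:small_ball}, including the cut-off rotation that corrects the lift from $c_1$ to the identity on each attaching ball, makes the equivariant homeomorphism the identity near those balls, so it extends over the remaining summands. Your degree argument, via Theorem~\ref{thm:connected_sum_formula} and Corollary~\ref{cor:equivariant_connected_sum_degree}, is fine and then gives $m(X,d_n)=3^n m(X,c)\neq 0$.

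For the surfaces, Theorem~\ref{thm:isotopy} is likewise unavailable, for two reasons:
\begin{enumerate}
\item Its proof applies Corollary~\ref{cor:isotopy_general} to $(Q,B)$, which needs $\pi_1(Q-B)\cong\mathbb{Z}_2$, and Perron--Quinn needs $\pi_1(Q)=1$.
\item It turns inequivalent involutions into non-diffeomorphic pairs using uniqueness of the branched double cover, which is lost once $\pi_1(X-X^c)\neq 1$.
\end{enumerate}
The paper instead isotopes $B_1\#n(\rho P)$ to $B_1$ inside $Q_1$ rel a ball $E$ (Corollary~\ref{cor:isotopy_general}, using Orson--Powell) and extends by the identity. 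It then uses finiteness of $H^1(Q-\tilde{B}_n;\mathbb{Z}_2)$ to pass to a subsequence of pairwise non-diffeomorphic pairs.

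Your mod-$2$ fallback would also fail: by \eqref{eq:degree_and_SW}, any summand with $b_+(X_i)^{-c_i}>0$ has even Real degree.
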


\subsection*{Outline of the proofs}
We begin by comparing our construction with Baraglia's. Here are the building blocks:
\begin{itemize}[leftmargin=2em]
    \item Let $P\subset S^3$ be the $(-2,3,7)$-pretzel knot, and let $\rho P=\tau^0\rho^1(P)\subset S^4$ be its $0$-twist $1$-roll spin. The double cover $\Sigma$ of $S^4$ branched along $\rho P$ is a homotopy $4$-sphere \cite[Theorem 4.44]{Miyazawa-2023}. Let $\iota$ denote the covering involution on $\Sigma$.
    \item Let $\kappa$ denote complex conjugation on $\CPbar$, and let $P_0$ denote the standard $\mathbb{RP}^2$ in $S^4$ with normal Euler number $2$; it is the image of the composition $\mathbb{RP}^2\hookrightarrow \CPbar \to \CPbar/\langle \kappa\rangle=S^4$.
\end{itemize}
The following statements were already known: \begin{enumerate}[label=(\arabic*), leftmargin=2em]
    \item \label{item_(1)} For each $n\geq 1$, $P_0\# n(\rho P)$ is topologically isotopic to $P_0$ in $S^4$ \cite[Theorem 4.38]{Miyazawa-2023}.
    \item \label{item_(2)} For each $n\geq 1$, $(\CPbar, \kappa)\#n(\Sigma,\iota)$ is topologically equivalent to $(\CPbar,\kappa)$. (This follows from \ref{item_(1)}.)
    \item \label{item_(3)} $\CPbar\#\Sigma$ is diffeomorphic to $\CPbar$ \cite[Corollary 1.2]{Hughes-Kim-Miller-2024}.
\end{enumerate}
Given a $4$-manifold-involution pair $(X,c)$, Baraglia studies the family \[
\left\{(X,c)\#(\CPbar,\kappa)\# n(\Sigma, \iota)\right\}_{n=0}^\infty 
\]
to get an exotic family of involutions on $X\#\CPbar$, and we shall consider the family \[
\left\{(X,c)\# n(\Sigma,\iota)\right\}_{n=0}^\infty
\]
to get an exotic family of involutions on $X$ itself. For the first family, \ref{item_(2)} directly gives topological equivalence, and \ref{item_(3)} directly gives a diffeomorphism $X\#\CPbar \#n\Sigma\cong X\# \CPbar$. But for the second family, we must establish topological equivalences $(X,c)\#n(\Sigma ,\iota)\cong (X,c)$ and diffeomorphisms $X\#n\Sigma \cong X$. 

Similarly, for the quotient $Q:=X/\langle c\rangle$ with the image $B$ of $X^c$ in $Q$, Baraglia considers the family $\{B\#P_0\#n(\rho P)\}_{n=0}^\infty$ of surfaces, and \ref{item_(1)} directly gives topological isotopy. However, we shall consider the family $\{B\# n(\rho P)\}_{n=0}^\infty$, for which topological isotopy is not direct.

\vspace{1mm}
We now outline the proof of Theorem \ref{thm:main}. To show that $(X\#n\Sigma,c\#n\iota)$ is topologically equivalent to $(X,c)$, we show that the quotient pairs $(Q,B\#n(\rho P))$ and $(Q,B)$ are homeomorphic; then topological equivalence is obtained by lifting this homeomorphism. We first use a trick of Hughes--Kim--Miller \cite{Hughes-Kim-Miller-2024} which gives a diffeomorphism $(Q,B\#\rho P)\cong (Q_P(\varphi),B)$, where $Q_P(\varphi)$ is obtained from $Q$ by \emph{knot surgery} along a torus disjoint from $B$. We then follow the argument of Kim--Ruberman \cite{Kim-Ruberman-2008} to show that $(Q_P(\varphi),B)$ is homeomorphic to $(Q,B)$, using the assumption that the complement $X-X^c$ is simply-connected, and the fact that the knot $P$ has zero Arf invariant. 

To show that $X\# n\Sigma$ is diffeomorphic to $X$, we use Kuhrman's observation \cite{Kuhrman-2025} that $\Sigma$ is obtained from $S^4$ by a \emph{Gluck twist}. Then we apply a criterion of Akbulut--Yasui \cite{Akbulut-Yasui-2013} to conclude that $X\#\Sigma$ is diffeomorphic to $X$, using the assumption that $X$ is non-spin and $X-X^c$ is simply-connected. Iteration yields $X\#n\Sigma \cong X$. 

To prove that the pairs $(X\#n\Sigma, c\# n\iota)$ are pairwise smoothly inequivalent, we use Real Seiberg--Witten theory, based on the results of Miyazawa \cite{Miyazawa-2023} and Baraglia \cite{Baraglia-2026}. More precisely, we use the \emph{Real degree}, which is an invariant of a smooth $4$-manifold-involution pair equipped with a Real spin$^c$ structure. We consider the set of all absolute values of nonzero Real degrees of a pair. The minimum of this set turns out to be pairwise distinct for the pairs $(X\#n\Sigma, c\# n\iota)$, which proves smooth inequivalence. The additional statement that this family remains exotic after equivariant blow-ups is an easy consequence of the blow-up formula for the Real degree.

Finally, to prove that $c\#n\iota$ cannot be anti-symplectic for any orientation-compatible symplectic form, we use the observation (Theorem \ref{thm:symplectic_degree}) that if $(X,\omega)$ is a closed, connected, symplectic $4$-manifold with $b_1(X)=0$ and $b_+(X)\equiv 3\pmod{4}$, and if $c:X\to X$ is an anti-symplectic involution whose fixed locus is a nonempty surface, then the Real degree satisfies $|\deg_R(X,c,\mathfrak{s}_\omega)|=2$ where $\mathfrak{s}_\omega$ is the canonical spin$^c$ structure. This is a Real counterpart of Taubes' result \cite{Taubes-1994} that a closed, connected, symplectic $4$-manifold $(X,\omega)$ with $b_+(X)\geq 2$ satisfies $\operatorname{SW}(X,\mathfrak{s}_\omega)=\pm 1$. We prove that $(X\#n\Sigma,c\#n\iota)$ does not have a Real spin$^c$ structure with Real degree $\pm 2$, thereby completing the proof of Theorem \ref{thm:main}.

\vspace{1mm}
To prove Theorem \ref{thm:isotopy}, we take a little care to show that the homeomorphism of the pairs $(Q,B\#n(\rho P))$ and $(Q,B)$ obtained above can be chosen to induce the identity on $H_*(Q;\mathbb{Z})$. Then the results of Perron \cite{Perron-1986} and Quinn \cite{Quinn-1986} imply that the surfaces $B\# n(\rho P)$ and $B$ are topologically isotopic. The fact that the involutions $c\# n\iota$ are pairwise smoothly inequivalent implies that the surfaces $B\# n(\rho P)$ are pairwise smoothly non-isotopic. 

\vspace{1mm}
For Theorem \ref{thm:connected_sum}, we carry out the construction on a strongly admissible summand. A little extra care is needed to extend an isotopy on the chosen strongly admissible summand to the whole connected sum, and we use the relative isotopy result of Orson--Powell \cite{Orson-Powell-2025}.

\begin{remark}
    If the Miyazawa homotopy $4$-sphere $\Sigma$ is diffeomorphic to the standard $S^4$, then the conclusions of Theorems \ref{thm:main} and \ref{thm:isotopy} hold without the non-spin assumption; this assumption is used only to prove $X\#\Sigma$ is diffeomorphic to $X$.  
\end{remark}

\subsection*{Organization of the paper}
Section \ref{sec:topological_equivalence} establishes topological equivalence of $(X,c)\#n(\Sigma,\iota)$ and $(X,c)$. We first construct homeomorphisms of the quotient-branch pairs and then lift to the double covers. We also prove topological isotopy of the branch surfaces. 

Section \ref{sec:RSW} recalls the Real Seiberg--Witten invariants and the Real degree, mostly following Miyazawa and Baraglia. We also collect the connected-sum formula, the blow-up formula, and some computational results needed later.

Section \ref{sec:main_proofs} proves the main results. After establishing a diffeomorphism $X\#n\Sigma \cong X$ for simply-connected non-spin $X$, we combine the results from Sections \ref{sec:topological_equivalence} and \ref{sec:RSW} to prove Theorems \ref{thm:main}, \ref{thm:isotopy}, and \ref{thm:connected_sum}, together with Corollaries \ref{cor:ordinary_SW_formulation} and \ref{cor:symplectic}.

Sections \ref{sec:branched_covers} and \ref{sec:complete_intersection} are essentially independent of Sections \ref{sec:topological_equivalence}, \ref{sec:RSW}, and \ref{sec:main_proofs}. Section \ref{sec:branched_covers} studies cyclic branched covers of $\mathbb{CP}^2$ and constructs involutions whose fixed loci have simply-connected complements. The main geometric input is the computation of the fundamental group of this complement for a carefully chosen branch curve. We then prove Theorem \ref{thm:branched_cover} and determine the oriented homeomorphism type of the quotients. Section \ref{sec:complete_intersection} proves Theorem \ref{thm:complete_intersection} and determines the oriented homeomorphism type of the quotients.

\subsection*{Acknowledgments}
The authors would like to thank Marco Golla, Marco Marengon, Alice Merz, Jin Miyazawa, Mark Powell, Diego Santoro, and Andr{\'as} Stipsicz for their invaluable comments and suggestions. The first author was supported by ERC Advanced Grant KnotSurf4d. The second author was supported by the grant EXCELLENCE-151337.
The use of AI and LLMs during this project was limited to literature research.
\section{Topological equivalence}\label{sec:topological_equivalence}

For $i=1,2$, let $X_i$ be an oriented topological $4$-manifold with an orientation-preserving involution $c_i:X_i\to X_i$. The pairs $(X_1,c_1)$ and $(X_2,c_2)$ are said to be \emph{topologically equivalent} if there is an orientation-preserving homeomorphism $f:X_1\to X_2$ such that $f\circ c_1=c_2\circ f$. The goal of this section is to prove the following theorem.

\begin{theorem}\label{thm:topological_equivalence}
    Let $X$ be a closed, oriented, smooth $4$-manifold with an orientation-preserving smooth involution $c:X\to X$ such that the fixed locus $X^c$ is a nonempty surface and $X-X^c$ is simply-connected. For each $n\geq 0$, the equivariant connected sum $(X,c)\#n(\Sigma,\iota)=(X\#n\Sigma, c\#n\iota)$ formed at points of the fixed surfaces is topologically equivalent to $(X,c)$. 
\end{theorem}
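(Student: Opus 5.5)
The plan follows the outline in the introduction. We work downstairs in the quotient and then lift. Let $Q=X/\langle c\rangle$ and let $B\subset Q$ be the image of $X^c$. The equivariant connected sum $(X,c)\#(\Sigma,\iota)$ taken at fixed points is the double cover of $Q$ branched along $B\#\rho P$; the sum is formed in a ball of $Q$ meeting $B$ in a trivial disk, and $(\Sigma,\iota)$ is the branched cover of $(S^4,\rho P)$. So it suffices to build a homeomorphism of pairs $(Q,B\#n(\rho P))\cong(Q,B)$ and lift it. By induction on $n$ we reduce to $n=1$: we replace $(X,c)$ by $(X\#\Sigma,c\#\iota)$ and need the hypotheses to persist. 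The complement of the fixed locus in $X\#\Sigma$ is simply connected by van Kampen, because $\Sigma-\rho P$ has $\pi_1$ normally generated by a meridian, which bounds a disk in the branched double cover context. This is cleaner if we verify topological equivalence directly, since then everything is homeomorphic to $(X,c)$.

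For $n=1$ we first use the Hughes--Kim--Miller trick. The roll spin $\rho P$ is built from $P$ by spinning, and the connected sum $B\#\rho P$ can be undone by a diffeomorphism $(Q,B\#\rho P)\cong(Q_P(\varphi),B)$. Here $Q_P(\varphi)$ is the result of Fintushel--Stern knot surgery with the knot $P$ along a torus $T$ of self-intersection $0$ in a neighbourhood of a meridian circle of $B$ times a circle, so $T$ lies in $Q-B$. Next we show $(Q_P(\varphi),B)\cong(Q,B)$ topologically, following Kim--Ruberman. Knot surgery replaces $T^2\times D^2$ by $S^1\times(S^3-\nu P)$, and the exterior of $P$ is a homology $S^1\times D^2$. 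Consider the complement $W=Q-\nu B$. Its double cover is $X-\nu X^c$, which has $\pi_1=1$, so $\pi_1(W)$ is $\mathbb Z/2$ or trivial, generated by a meridian of $B$. Knot surgery preserves this, because the meridian of $P$ is killed by the attaching (the $S^1$ factor of $T$ is nullhomotopic in $W$, or the meridian of $B$ generates). The homology and intersection form of $W_P$ agree with those of $W$ rel boundary. Because $\operatorname{Arf}(P)=0$, the Kirby--Siebenmann invariant is unchanged. Then we invoke the relative classification of compact 4-manifolds with good (finite cyclic) fundamental group and fixed boundary, due to Freedman--Quinn and Boyer. This gives a homeomorphism $W_P\cong W$ that restricts to the identity on $\partial\nu B$, and we extend it over $\nu B$ by the identity.

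Finally we lift. A homeomorphism of pairs $f:(Q,B')\to(Q,B)$ lifts to the branched double covers if it preserves the covering class in $H^1(Q-B;\mathbb Z/2)$. When $\pi_1(Q-B)=\mathbb Z/2$ this class is unique, so the lift exists. Alternatively, the lift exists automatically because $f$ is the identity near $\partial\nu B$. The lift commutes with the involutions up to composing with the deck transformation, and it is orientation-preserving after possibly composing with $c$.

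The main obstacle is the Kim--Ruberman step. We must check carefully that $\pi_1$ of the complement is preserved and is of the right type, that the equivariant intersection form over $\mathbb Z[\mathbb Z/2]$ is unchanged, and that the relative classification applies with the given boundary identification. The case $\pi_1(W)=\mathbb Z/2$ with non-orientable $B$ requires a careful use of Hambleton--Kreck-type results for finite cyclic groups.
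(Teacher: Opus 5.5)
Your outline (pass to $(Q,B)$, turn $B\#\rho P$ into knot surgery on a torus $T\subset Q-B$ via Hughes--Kim--Miller, show $(Q_P(\varphi),B)\cong(Q,B)$, then lift) matches the paper. However, the two steps that carry the weight are not actually established.

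\textbf{The homeomorphism $W_P\cong W$ rel boundary.} You appeal to a ``relative classification'' of compact $4$-manifolds with finite cyclic $\pi_1$ and fixed boundary. No such result is available in the form you need. Boyer's rel-boundary classification is for simply connected manifolds, and Hambleton--Kreck's results are for closed ones. For $\pi_1=\mathbb{Z}/2$, no theorem says that matching homology, intersection forms and Kirby--Siebenmann invariants gives a homeomorphism $W_P\to W$ \emph{extending the given identification of $\partial\nu B$}. That extension is exactly what lets you glue back $\operatorname{id}_{\nu B}$ while keeping $B$ fixed.

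Your account of where $\operatorname{Arf}(P)=0$ enters is also wrong. $W$ and $W_P$ are both smooth, so both Kirby--Siebenmann invariants vanish for every knot, and the Arf hypothesis would play no role in your argument.

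The missing idea (Theorem~\ref{thm:Kim-Ruberman}) is to work with a specific map rather than a classification.
\begin{itemize}
\item Surgery with the unknot returns $W$ rel $W-\operatorname{int}\nu T$. Using this, Kim--Ruberman build a homotopy equivalence $f\colon W_P\to W$ that is the identity on $W-\operatorname{int}\nu T$, hence on $\partial W$. This is where $\pi_1(W_P)\cong\pi_1(W)$ is needed.
\item Viewed as a degree-one normal map rel boundary, $f$ has normal invariant $\bigl(0,\operatorname{Arf}(P)\operatorname{PD}[T']\bigr)\in H^4(W,\partial W;\mathbb{Z})\oplus H^2(W,\partial W;\mathbb{Z}/2)$. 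This is where $\operatorname{Arf}(P)=0$ is used.
\item $f$ is simple, and $L^s_5(\mathbb{Z}[\mathbb{Z}/2])=0$, so $f$ is $s$-cobordant rel boundary to $\operatorname{id}_W$.
\item Since $\mathbb{Z}/2$ is good, that $s$-cobordism is topologically a product, which gives the required homeomorphism extending the identity on $\partial\nu B$.
\end{itemize}

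\textbf{Preservation of $\pi_1$.} Your reason that $\pi_1$ survives the surgery is incorrect. The meridian of $P$ is not killed: it is identified with the meridian of $B$, which has order $2$. This is not a formal property of knot surgery. By van Kampen, $\pi_1(Q-(B\#\rho K))$ is $\pi_1(S^4-\rho K)$ modulo the normal closure of $\mu^2$. That group is $\mathbb{Z}/2$ exactly when the double branched cover of $S^4$ along $\rho K$ is simply connected, and this depends on $K$. That cover has $H_1$ of order $|\Delta_K(-1)|$, so it fails, for example, for the stevedore knot, which has Arf invariant $0$.

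For $K=P$ the needed fact is Miyazawa's theorem (Proposition~\ref{prop:Miyazawa_fundamental_group}), equivalently $\pi_1(\Sigma)=1$, and you must invoke it explicitly. The same fact is what makes your van Kampen argument for the complement of the fixed surface in $X\#\Sigma$ valid. Normal generation of a $2$-knot group by a meridian is not enough: you need $\pi_1(\Sigma)=1$.

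The lifting step is fine once both complements have $\pi_1\cong\mathbb{Z}/2$. Any lift is then automatically orientation-preserving and commutes with the involutions, so composing with $c$ is unnecessary.
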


Here, $\Sigma$ denotes the Miyazawa homotopy $4$-sphere, which is the total space of a double branched cover of $S^4$, and $\iota$ is the covering involution on $\Sigma$; see Section \ref{subsec:Miyazawa} below for details. 

\subsection{Fintushel--Stern knot surgery and twist-roll spun knots}  
For a classical knot $K\subset S^3$, we use the following notation. \begin{itemize}[leftmargin=2em]
    \item $\nu K\subset S^3$ denotes a closed tubular neighborhood of $K$ in $S^3$, and $E(K)$ denotes $S^3-\operatorname{int}\nu K$. 
    \item $\mu_K$ and $\lambda_K$ denote a meridian and a $0$-framed longitude of $K$, respectively, in $\partial E(K)$. 
    \item $\operatorname{Arf}(K)\in \mathbb{Z}_2$ denotes the Arf invariant of $K$. It is determined by the Alexander polynomial $\Delta_K(t)$ of $K$ \cite{Levine-1966}. In fact, \begin{equation}\label{eq:Arf_invariant}
         \operatorname{Arf}(K)=\begin{cases}
        0 & \text{if $\Delta_K(-1)\equiv \pm 1\pmod{8}$}, \\
        1 & \text{if $\Delta_K(-1)\equiv \pm 3\pmod{8}$}.
    \end{cases}
    \end{equation}
    \item For integers $m,n\in \mathbb{Z}$, $\tau^m\rho^n(K)$ denotes the $m$-twist $n$-roll spin of $K$ \cite{Litherland-1979}. In the special case $(m,n)=(0,1)$, we denote $\tau^0\rho^1(K)$ simply by $\rho K$.
    \end{itemize}

We now recall the knot surgery operation on $4$-manifolds, introduced by Fintushel and Stern \cite{Fintushel-Stern-1998}. Let $Z$ be a compact, oriented, smooth $4$-manifold, possibly with boundary. Suppose $T\subset \operatorname{int}Z$ is a smoothly embedded torus with trivial normal bundle. Let $\nu T\cong T\times D^2$ denote a closed tubular neighborhood of $T$ in $\operatorname{int}Z$, and let $\mu_T$ be a meridian of $T$. For a classical knot $K\subset S^3$, let $\varphi:S^1\times \partial E(K)\to \partial \nu T$ be a diffeomorphism satisfying $\varphi_*[\{*\}\times \lambda_K]=[\mu _T]$ on homology. Then the $4$-manifold \[
Z_K(\varphi):= (Z-\operatorname{int}\nu T)\cup_\varphi (S^1\times E(K))
\]
is said to be obtained by \emph{knot surgery} from $Z$. Observe that this operation does not change the boundary of $Z$, so there is a natural identification $\partial Z\cong \partial Z_K(\varphi)$. The following theorem shows that this identification extends to a homeomorphism $Z\cong Z_K(\varphi)$, provided that $\operatorname{Arf}(K)=0$ and both $\pi_1(Z)$ and $\pi_1(Z_K(\varphi))$ are finite cyclic of the same order.

\begin{theorem}\label{thm:Kim-Ruberman}
    Let $Z$ be a compact, connected, oriented, smooth $4$-manifold, possibly with boundary. Let $T\subset \operatorname{int}Z$ be a smoothly embedded torus with a trivial normal bundle. Suppose $K\subset S^3$ is a classical knot with $\operatorname{Arf}(K)=0$. If $\pi_1(Z)\cong \pi_1(Z_K(\varphi))\cong \mathbb{Z}_d$ for some $d\geq 1$, then the natural identification $\partial Z_K(\varphi)\cong  \partial Z$ extends to an orientation-preserving homeomorphism $Z_K(\varphi)\cong Z$. 
\end{theorem}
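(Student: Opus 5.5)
The plan is to follow Kim--Ruberman \cite{Kim-Ruberman-2008}. We compare $Z$ and $Z_K(\varphi)$ relative to the common piece $Z_0:=Z-\operatorname{int}\nu T$. Inside $Z$ we reinsert $\nu T\cong S^1\times(S^1\times D^2)=S^1\times E(U)$, where $U$ is the unknot, while inside $Z_K(\varphi)$ we insert $S^1\times E(K)$. The basic idea is that, since $\operatorname{Arf}(K)=0$, the knot $K$ can be turned into the unknot by a sequence of crossing changes and band (pass) moves, and this can be mirrored topologically in dimension four.

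The cleanest route I would take goes through a relative $h$-cobordism/homology cobordism statement. First I will construct a compact oriented $5$-dimensional cobordism $W$ rel boundary between $Z$ and $Z_K(\varphi)$. Take $Z_0\times I$ and glue in $S^1\times V$, where $V$ is a $4$-dimensional cobordism rel $\partial$ from $E(U)$ to $E(K)$. Concretely, $V$ is the complement in $S^3\times I$ of a concordance-like immersed or embedded annulus. Because $\operatorname{Arf}(K)=0$, $K$ bounds a topologically locally flat surface in $B^4$ whose complement behaves, after surgery, like a homology circle. More usefully, $K$ is topologically $\mathbb{Z}$-homology-concordant to the unknot in a homology $S^3\times I$. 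In fact I expect to use the fact that the $0$-surgery on $K$ with Arf zero bounds a spin manifold with suitable $\pi_1$ properties.

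Second, I will verify homological conditions. Since $E(K)$ and $E(U)$ are both homology circles with the meridian generating, Mayer--Vietoris shows that $H_*(Z_K(\varphi))\cong H_*(Z)$ compatibly with $Z_0$. The intersection forms agree, since the new piece $S^1\times E(K)$ contributes no new classes beyond those of $T^3\times I$. The Kirby--Siebenmann invariants agree as well: $\operatorname{ks}$ is additive, and the pieces $S^1\times E(K)$ and $S^1\times E(U)$ are smooth. Combined with $\pi_1(Z)\cong\pi_1(Z_K(\varphi))\cong\mathbb{Z}_d$, this sets us up for a classification theorem rel boundary. Since $\mathbb{Z}_d$ is a good group, I would invoke Boyer's classification of simply-connected manifolds with boundary, suitably extended to finite cyclic $\pi_1$ (Hambleton--Kreck in the closed case). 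These results say that homeomorphism rel boundary is determined by the equivariant intersection form on $\pi_2$ together with $\operatorname{ks}$ and the $w_2$-type.

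Third, I need a homotopy equivalence rel boundary $Z_K(\varphi)\to Z$ that is the identity on $Z_0$. For this I map $S^1\times E(K)\to S^1\times E(U)$ by $\mathrm{id}\times f$, where $f\colon E(K)\to E(U)$ is a degree-one map rel boundary, collapsing the Seifert surface complement. On $\pi_1$ this map kills the commutator subgroup of the knot group. Because $\varphi$ sends $\lambda_K$ to $\mu_T$ and $\pi_1(Z_K(\varphi))$ is finite cyclic, that subgroup already dies in $\pi_1(Z_K(\varphi))$, so the induced map is an isomorphism on $\pi_1$ and, by the above, on homology of universal covers.

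The main obstacle is the normal invariant. This homotopy equivalence is a priori only in the surgery structure set, and the obstruction to being homotopic to a homeomorphism lies in $[Z/\partial Z, G/TOP]$, detected by a codimension-$2$ Arf/Kervaire invariant on $T$. Computing it, I expect it to equal $\operatorname{Arf}(K)$ times the class dual to $T$ (the splitting invariant along the torus $S^1\times$Seifert surface). It therefore vanishes precisely because $\operatorname{Arf}(K)=0$. The surgery exact sequence for the good group $\mathbb{Z}_d$ then shows that the structure is equivalent, rel boundary, to the identity, possibly after composing with a self-homotopy-equivalence realized by a homeomorphism. The homeomorphism is orientation-preserving because $f$ has degree one.
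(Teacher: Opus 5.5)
Only your last two paragraphs give a workable argument, and they follow the paper's route: compare with the unknot surgery rel $Z-\operatorname{int}\nu T$, use $\operatorname{id}\times f$ with $f\colon E(K)\to E(U)$ of degree one, and kill the normal invariant using $\operatorname{Arf}(K)=0$. But the conclusion ``the surgery exact sequence for the good group $\mathbb{Z}_d$ then shows \ldots'' skips the inputs that make the argument close.

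First, the normal invariant lives in $H^4(Z,\partial Z;\mathbb{Z})\oplus H^2(Z,\partial Z;\mathbb{Z}_2)$. So besides the Arf class you must also note that the $H^4$-component, $(\sigma(Z_K(\varphi))-\sigma(Z))/8$, vanishes.

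More importantly, a trivial normal invariant only gives a normal cobordism from $f$ to the identity. To turn it into a product you need two further facts:
\begin{enumerate}
\item $f$ must be a \emph{simple} homotopy equivalence.
\item $L^s_5(\mathbb{Z}[\mathbb{Z}_d])$ must vanish.
\end{enumerate}
With these, the normal cobordism can be surgered to an $s$-cobordism rel boundary, which is then a product by Freedman--Quinn. Goodness of $\mathbb{Z}_d$ gives neither fact. $\operatorname{Wh}(\mathbb{Z}_d)$ is nonzero in general (e.g.\ $\operatorname{Wh}(\mathbb{Z}_5)\cong\mathbb{Z}$). The paper shows $\tau(f)=0$ by using $f|_{\partial}=\operatorname{id}$ to get $\tau(f)=-\overline{\tau(f)}$, and then using that the involution on $\operatorname{Wh}(\mathbb{Z}_d)$ is trivial and $\operatorname{Wh}(\mathbb{Z}_d)$ is torsion-free. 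It then uses $L^s_5(\mathbb{Z}[\mathbb{Z}_d])=0$. Your fallback of ``composing with a self-homotopy equivalence realized by a homeomorphism'' is unjustified, and it is unnecessary once $L^s_5=0$.

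Two further steps need repair. First, the homotopy-equivalence claim does not follow ``by the above''. Your Mayer--Vietoris computation uses $\mathbb{Z}$ coefficients and says nothing about the $d$-fold covers. The right argument uses that $\operatorname{id}\times f$ has degree one rel boundary and is a $\pi_1$-isomorphism, so the kernel modules $K_*$ on the $d$-fold covers satisfy Poincar\'e duality. Then $K_1=0$ and $K_3\cong K^1=0$. Since $\chi(Z_K(\varphi))=\chi(Z)$, $K_2$ is torsion, so $K_2\cong K^2\cong\operatorname{Hom}(K_2,\mathbb{Z})=0$. This is Kim--Ruberman's Lemma 2.3, which the paper cites.

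Second, your ``first step'' rests on a false statement. $\operatorname{Arf}(K)=0$ does not make $K$ $\mathbb{Z}$-homology concordant to the unknot: $T_{2,3}\#T_{2,3}$ has Arf invariant $0$ but signature $\pm 4$. So the cobordism $W$ cannot be built this way, and it is never used later anyway. Likewise, Boyer's classification is not available in the form you need, namely rel a prescribed boundary identification with $\pi_1\cong\mathbb{Z}_d$. Even if it were, the integral intersection form you checked would not be the relevant input. Drop both routes and complete the surgery argument with the additions above.
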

\begin{proof}
This is essentially the Arf-zero case of the argument given in \cite[Section 2]{Kim-Ruberman-2008}. Although Kim and Ruberman focus on the case where $Z$ is of the form $X-\operatorname{int}\nu \Sigma$ for an oriented $4$-manifold $X$ and a smoothly embedded oriented surface $\Sigma \subset X$, the same argument applies to an arbitrary $Z$ as stated above. So here we only give a sketch. 

Let $O\subset S^3$ denote the unknot. Identify $\partial E(K)$ with $\partial E(O)$ by a diffeomorphism sending $\mu_K$ to $\mu_O$ and $\lambda_K$ to $\lambda_O$. Using this identification, we can also consider the knot surgery $4$-manifold $Z_O(\varphi)$. The proof of \cite[Lemma 2.2]{Kim-Ruberman-2008} then shows that there is an orientation-preserving diffeomorphism $Z\cong Z_O(\varphi)$ rel $Z-\operatorname{int}\nu T$. Therefore, it suffices to show that the natural identification $\partial Z_K(\varphi)\cong \partial Z_O(\varphi)$ extends to an orientation-preserving homeomorphism $Z_K(\varphi)\cong Z_O(\varphi)$. 

Next, the proof of \cite[Lemma 2.3]{Kim-Ruberman-2008} provides a homotopy equivalence $f:Z_K(\varphi)\to Z_O(\varphi)$ such that $f|_{Z-\operatorname{int}\nu T}=\operatorname{id}$. In particular, $f$ restricts to the identity on the boundary. 

Write $Z':=Z_O(\varphi)$. The map $f:Z_K(\varphi)\to Z'$ is a normal map in the sense of \cite{Wall-1970}, with the bundle data given as follows. Choose a homotopy inverse $g:Z'\to Z_K(\varphi)$ rel boundary. If $\nu$ is the stable normal bundle of $Z_K(\varphi)$, the homotopy $gf\simeq \operatorname{id}$ rel boundary gives a stable bundle isomorphism $\nu \cong f^*g^*\nu$, and hence a bundle map covering $f$, with the prescribed normal data on the boundary.

The topological normal cobordism class rel boundary is classified by the invariant \[
n(f)\in H^4(Z',\partial Z';\mathbb{Z})\oplus H^2(Z',\partial Z';\mathbb{Z}_2), 
\]
where the first component of $n(f)$ is $(\sigma(Z_K(\varphi))-\sigma(Z'))/8=0$ \cite{Kirby-Taylor-2001}. The second component of $n(f)$ is $\operatorname{Arf}(K)\operatorname{PD}[T']$ \cite[Lemma 2.4]{Kim-Ruberman-2008}, where $T'\subset S^1\times E(O)\subset Z'$ is the product of the $S^1$ factor with a push-off of $\mu_O$ into $E(O)$. Since $\operatorname{Arf}(K)=0$ by hypothesis, it follows that $f$ is normally cobordant to the identity map of $Z'$. Observe that $f$ is a simple homotopy equivalence. Indeed, since $f$ is a homotopy equivalence with $f|_\partial =\operatorname{id}$, its Whitehead torsion satisfies $\tau(f)=-\overline{\tau(f)}$ in the Whitehead group $\operatorname{Wh}(\mathbb{Z}_d)$. But the involution acts trivially on $\operatorname{Wh}(\mathbb{Z}_d)$ \cite{Bass-1974}, and $\operatorname{Wh}(\mathbb{Z}_d)$ is torsion-free \cite{Cohen-1973}; hence $\tau(f)=0$. Since the Wall group $L^s_5(\mathbb{Z}[\mathbb{Z}_d])$ is trivial, there is no surgery obstruction, so a normal cobordism from $f$ to $\operatorname{id}_{Z'}$ can be surgered to give an $s$-cobordism from $Z_K(\varphi)$ to $Z'$ rel boundary. Such an $s$-cobordism is topologically a product \cite{Freedman-Quinn-1990}. Hence the identification $\partial Z_K(\varphi)\cong \partial Z'$ extends to an orientation-preserving homeomorphism $h:Z_K(\varphi)\cong Z'$, completing the proof. For later use, we note that this $h$ is homotopic to $f$ rel boundary.
\end{proof}

Next, let $Q$ be an oriented smooth $4$-manifold, and let $B\subset Q$ be a smoothly embedded closed surface, possibly nonorientable or disconnected. For a classical knot $K\subset S^3$ and integers $m,n\in \mathbb{Z}$, consider the connected sum \[
(Q,B)\# (S^4,\tau^m \rho^n(K)) \cong (Q,B\#\tau^m \rho^n(K)).
\]
The following proposition, which is a straightforward consequence of \cite[Proposition 2.1]{Hughes-Kim-Miller-2024}, shows that this connected sum can be viewed as a knot surgery.
\begin{proposition}\label{prop:Hughes-Kim-Miller}
    There exists a smoothly embedded torus $T\subset Q-B$ with a trivial normal bundle such that there is an orientation-preserving diffeomorphism of pairs \[
    (Q_K(\varphi_{m,n}),B)\cong(Q, B\#\tau^m\rho^n(K))
    \]
    for some gluing diffeomorphism $\varphi_{m,n}:S^1\times \partial E(K)\to \partial \nu T$ satisfying $(\varphi_{m,n})_*[\{*\}\times \lambda_K]=[\mu_T]$. 
\end{proposition}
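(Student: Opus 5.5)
The plan is to localize the connected sum near a point of $B$ and invoke \cite[Proposition 2.1]{Hughes-Kim-Miller-2024} in a model ball. That result, as I recall it, says the following. The $m$-twist $n$-roll spin $\tau^m\rho^n(K)$ arises by removing from $S^4$ a neighborhood $S^1\times B^3$ of a circle. This circle links the unknotted $2$-sphere $U$ in a standard way. One then reglues $S^1\times(B^3,\text{knotted arc})$ with a twist/roll monodromy. Equivalently, there is a torus $T_0\subset S^4-U$, namely the product of a circle linking $U$ with a meridian of the arc, such that knot surgery on $T_0$ with an appropriate gluing $\varphi_{m,n}$ takes $(S^4,U)$ to $(S^4,\tau^m\rho^n(K))$.

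\textbf{Step 1 (local model).} Fix a point $p\in B$ and a small ball $D\subset Q$ around $p$ with $(D,D\cap B)\cong(B^4,B^2)$ standard. The pair $(S^4,\tau^m\rho^n(K))$ minus a small unknotted ball pair around a point of the knot is a ball pair $(B^4,\Delta)$. By construction,
\[
(Q,B\#\tau^m\rho^n(K)) = \bigl((Q,B)-(D,D\cap B)\bigr)\cup (B^4,\Delta).
\]
Choose the removed ball pair away from the circle used in the spinning construction. The Hughes--Kim--Miller description then applies to $(B^4,\Delta)$ relative to its boundary. There is a torus $T_0\subset B^4-B^2$ with trivial normal bundle, and a gluing map $\varphi_{m,n}$ with $(\varphi_{m,n})_*[\{*\}\times\lambda_K]=[\mu_{T_0}]$, such that $(B^4,B^2)_K(\varphi_{m,n})\cong(B^4,\Delta)$ rel boundary, by an orientation-preserving diffeomorphism. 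The gluing condition on the longitude is the standard one in knot surgery: the meridian of $T_0$ is glued to the longitude of the knot.

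\textbf{Step 2 (globalize).} Set $T:=T_0\subset D-B\subset Q-B$. Knot surgery is local to $\nu T\subset\operatorname{int}D$, so
\[
Q_K(\varphi_{m,n}) = (Q-\operatorname{int}D)\cup D_K(\varphi_{m,n}).
\]
Gluing the diffeomorphism from Step 1 to the identity on $Q-\operatorname{int}D$ gives
\[
(Q_K(\varphi_{m,n}),B)\cong(Q,B\#\tau^m\rho^n(K)),
\]
which is orientation-preserving. The normal bundle of $T$ is trivial and the homological gluing condition holds, both inherited from the model.

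\textbf{Main obstacle.} The main point needing care is Step 1: extracting a relative, ball-pair version of \cite[Proposition 2.1]{Hughes-Kim-Miller-2024}, which is stated for the closed pair in $S^4$. The fix is to note that their torus lies in the complement of a neighborhood of a point of the unknotted sphere. Their diffeomorphism can then be isotoped to be standard near that point, using uniqueness of tubular neighborhoods and the fact that a knot surgery diffeomorphism supported near $T_0$ fixes a small ball pair away from $T_0$. Removing that ball pair yields the relative statement. The orientation conventions and the longitude condition are carried over verbatim from their statement.
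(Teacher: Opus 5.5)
Your proposal is correct and is essentially the paper's argument. The paper takes the sphere--torus link $S\amalg T\subset S^4$ from \cite[Proposition 2.1]{Hughes-Kim-Miller-2024}, which gives $(S^4_K(\varphi_{m,n}),S)\cong(S^4,\tau^m\rho^n(K))$, and then forms the connected sum with $(Q,B)$ away from the surgery region; this is exactly your Steps 1--2. For your ball-pair refinement, you only need that a diffeomorphism of pairs can be isotoped to carry a small standard ball pair at a point of $S$ onto a standard ball pair at a point of $\tau^m\rho^n(K)$ (uniqueness of disk-pair neighborhoods); the claim that the Hughes--Kim--Miller diffeomorphism is ``supported near $T_0$'' is unnecessary and not quite accurate, since that map goes between different manifolds.
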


Note that the surgery leaves a sufficiently small tubular neighborhood $\nu B$ unchanged, since the torus $T$ is disjoint from $B$. 

\begin{proof}
    Let $S\amalg T\subset S^4$ be the sphere-torus link given in \cite[Proposition 2.1]{Hughes-Kim-Miller-2024}; here $S$ is an unknotted $2$-sphere. There it is proved that, for some diffeomorphism $\varphi_{m,n}:S^1\times \partial E(K)\to \partial \nu T$ satisfying $(\varphi_{m,n})_*[\{*\}\times \lambda_K]=[\mu_T]$, the pair $(S^4_K(\varphi_{m,n}),S)$ is diffeomorphic to $(S^4,\tau^m\rho^n(K))$. Thus, the result follows by taking connected sums with $(Q,B)$ away from the surgery region. 
\end{proof}

\subsection{The Miyazawa homotopy sphere}\label{subsec:Miyazawa}
Let $P$ denote the pretzel knot $P(-2,3,7)$, and consider its roll $\rho P=\tau^0\rho^1(P)\subset S^4$. Let $\Sigma$ denote the double cover of $S^4$ branched along $\rho P$, and let $\iota:\Sigma\to \Sigma$ be its covering involution. Miyazawa proved that $\Sigma$ is a homotopy $4$-sphere \cite[Theorem 4.44]{Miyazawa-2023}. We call $\Sigma$ the \emph{Miyazawa homotopy $4$-sphere}. 

\begin{proposition}[{\cite[Theorem 4.40]{Miyazawa-2023}}]\label{prop:Miyazawa_fundamental_group}
    Let $Q$ be a closed, connected, oriented, smooth $4$-manifold, and let $B\subset Q$ be a smoothly embedded closed surface. If $\pi_1(Q-B)\cong \mathbb{Z}_2$ and a meridian of every component of $B$ represents the nontrivial element, then $\pi_1(Q-(B\# \rho P))\cong \mathbb{Z}_2$ and the meridian of every component of $B\#\rho P$ remains nontrivial.
\end{proposition}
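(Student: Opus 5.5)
The plan is to compute $\pi_1$ of the complement of $B\#\rho P$ through the knot surgery description of Proposition \ref{prop:Hughes-Kim-Miller}, using that the group of the roll spin $\rho P$ is a quotient of the knot group of $P$ in which the meridian acts in a controlled way. By Proposition \ref{prop:Hughes-Kim-Miller} with $(m,n)=(0,1)$, there is a torus $T\subset Q-B$ and a diffeomorphism $(Q_P(\varphi_{0,1}),B)\cong (Q,B\#\rho P)$. Hence
\[
Q-(B\#\rho P)\cong \bigl((Q-B)-\operatorname{int}\nu T\bigr)\cup_{\varphi_{0,1}} \bigl(S^1\times E(P)\bigr),
\]
and I compute $\pi_1$ of the right-hand side by Seifert--van Kampen. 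Choosing $T$ inside a small ball meeting $B$ in a disk, as in the local model of Hughes--Kim--Miller, the torus $T$ together with the sphere $S$ forms a link in $S^4$, so everything happens locally near a point of $B$.

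The key steps, in order, are as follows.

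First, compute $\pi_1$ of the local piece. This is $\pi_1(S^4-\rho P)$, the roll spun knot group. For the $0$-twist $1$-roll spin, Litherland's description presents it as the quotient of $\pi_1(E(P))$ by the relation that $\mu_P$ is central, or more precisely that the roll acts by conjugation by $\mu_P$. In either form the group is generated by meridians and abelianizes to $\mathbb{Z}$.

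Second, glue this local piece to $Q-B$ minus a ball. The result is the amalgamation
\[
\pi_1(Q-B) *_{\mathbb{Z}} \pi_1(S^4-\rho P),
\]
where $\mathbb{Z}$ is generated by the meridian. The meridian maps to the nontrivial element $t\in\mathbb{Z}_2$ on the left, and to a meridian $\mu$ on the right. This is the standard computation for the complement of a connected sum of surfaces. The quotient of the amalgam is $\pi_1(S^4-\rho P)/\langle\langle \mu^2\rangle\rangle$, so the task becomes showing that this group is $\mathbb{Z}_2$. Since the group is normally generated by $\mu$, it is a quotient of the group of the $2$-fold branched cover situation: the kernel of its map to $\mathbb{Z}_2$ is $\pi_1$ of the double branched cover $\Sigma$ minus the branch locus, modulo the lifted meridian. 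In other words, the quotient is an extension of $\mathbb{Z}_2$ by $\pi_1(\Sigma)$. Since $\Sigma$ is simply-connected \cite[Theorem 4.44]{Miyazawa-2023}, the quotient is $\mathbb{Z}_2$ and the meridian survives.

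Third, treat the components of $B$ not involved in the connected sum. Their meridians map to the same nontrivial element, since the map from $\pi_1(Q-B)$ to the amalgamated group is an isomorphism on the quotient.

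I expect the main obstacle to be the middle step: justifying that $\pi_1(S^4-\rho P)/\langle\langle\mu^2\rangle\rangle\cong\mathbb{Z}_2$. The approach I would try first is the covering argument just sketched, namely identifying $\pi_1(\Sigma)$ with the index-$2$ subgroup modulo $\mu^2$. If that identification is not clean, the fallback is to verify it directly from the Wirtinger presentation of $P(-2,3,7)$ and Litherland's relation, checking that adding $\mu^2=1$ collapses the group to $\mathbb{Z}_2$. This is essentially Miyazawa's computation in \cite{Miyazawa-2023}.
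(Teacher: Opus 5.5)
Your argument is correct, but it is not what the paper does. The paper gives no proof and simply imports the statement from \cite[Theorem 4.40]{Miyazawa-2023}.

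The computation you actually carry out uses the connected-sum splitting of $Q-(B\#\rho P)$ along $S^3$ minus an unknot. The knot-surgery description of Proposition \ref{prop:Hughes-Kim-Miller} in your first paragraph plays no role and can be dropped. Van Kampen then gives $G/\langle\langle\mu^2\rangle\rangle$ with $G=\pi_1(S^4-\rho P)$. Your covering argument identifies its index-two subgroup $K/\langle\langle\mu^2\rangle\rangle$, where $K=\ker(G\to\mathbb{Z}_2)$, with $\pi_1(\Sigma)$. For that identification you should say why the normal closure of $\mu^2$ in $G$ agrees with the normal closure in $K$ of the lifted meridian. The reason is that $G=K\cup\mu K$ and $\mu(k\mu^2k^{-1})\mu^{-1}=(\mu k\mu^{-1})\mu^2(\mu k\mu^{-1})^{-1}$ with $\mu k\mu^{-1}\in K$.

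What your route buys is that it isolates the single input needed. It therefore proves the statement with $\rho P$ replaced by any $2$-knot whose double branched cover is simply-connected. What it costs is that this input is equivalent to the statement itself. For $(Q,B)=(S^4,P_0)$ the proposition says precisely $G/\langle\langle\mu^2\rangle\rangle\cong\mathbb{Z}_2$, and by your own covering argument this is equivalent to $\pi_1(\Sigma)=1$. So all the real content, a group computation specific to $P(-2,3,7)$, sits inside the citation of \cite[Theorem 4.44]{Miyazawa-2023}. Within this paper that citation is legitimate. However, you should check that Miyazawa's proof that $\Sigma$ is simply-connected does not itself go through his Theorem 4.40. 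If it does, your argument reduces the general case to this special case rather than proving it, and only your fallback computation removes the dependence.

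If you pursue that fallback, note that your description of Litherland's presentation is wrong. The meridian $\mu_P$ cannot become central: a group normally generated by a central element is cyclic, which would force $\pi_1(S^4-\rho P)\cong\mathbb{Z}$ and make the statement trivial. In Litherland's description, the group of $\tau^m\rho^n(K)$ is obtained from $\pi_1(E(K))$ by making $\mu_K^m\lambda_K^n$ central. So for $\rho P=\tau^0\rho^1(P)$ it is the longitude that becomes central: $\pi_1(S^4-\rho P)\cong\pi_1(E(P))/\langle\langle[\lambda_P,x]:x\in\pi_1(E(P))\rangle\rangle$. Also, since $\mathbb{Z}\to\mathbb{Z}_2$ is not injective, the van Kampen group is a pushout rather than an amalgamated free product. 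You compute it correctly, but the notation $*_{\mathbb{Z}}$ is misleading.
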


Combining this proposition with the preceding results on knot surgery yields the following corollary.

\begin{corollary}\label{cor:homeomorphism_of_pairs}
    Let $Q$ be a closed, connected, oriented, smooth $4$-manifold, and let $B\subset Q$ be a smoothly embedded closed surface. If $\pi_1(Q-B)\cong \mathbb{Z}_2$ and a meridian of every component of $B$ represents the nontrivial element, then there is an orientation-preserving homeomorphism $(Q,B)\cong (Q,B\# \rho P)$ of pairs.
\end{corollary}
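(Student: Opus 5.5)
The plan is to combine Proposition \ref{prop:Hughes-Kim-Miller}, Proposition \ref{prop:Miyazawa_fundamental_group}, and Theorem \ref{thm:Kim-Ruberman}, applied to the complement of a tubular neighborhood of $B$.

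First, apply Proposition \ref{prop:Hughes-Kim-Miller} with $K=P$ and $(m,n)=(0,1)$. This gives a torus $T\subset Q-B$ with trivial normal bundle and a gluing map $\varphi=\varphi_{0,1}$. It also gives an orientation-preserving diffeomorphism of pairs $(Q_P(\varphi),B)\cong (Q,B\#\rho P)$, which is the identity near $B$ after choosing $\nu B$ small and disjoint from $\nu T$. Now set $Z:=Q-\operatorname{int}\nu B$, a compact connected oriented smooth $4$-manifold with boundary $\partial\nu B$, and note $T\subset\operatorname{int}Z$. Since the knot surgery happens away from $\nu B$, we have $Z_P(\varphi)=Q_P(\varphi)-\operatorname{int}\nu B$. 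Hence $Z_P(\varphi)$ is diffeomorphic to $Q-\operatorname{int}\nu(B\#\rho P)$, the complement of $B\#\rho P$.

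Next, verify the hypotheses of Theorem \ref{thm:Kim-Ruberman} with $d=2$.
\begin{itemize}
\item $\pi_1(Z)\cong\pi_1(Q-B)\cong\mathbb{Z}_2$ by assumption.
\item $\pi_1(Z_P(\varphi))\cong\pi_1(Q-(B\#\rho P))\cong\mathbb{Z}_2$ by Proposition \ref{prop:Miyazawa_fundamental_group}.
\item For the Arf invariant, $\Delta_P(t)$ is the Alexander polynomial of $P(-2,3,7)$, which is the Lehmer polynomial $t^{10}-t^9+t^7-t^6+t^5-t^4+t^3-t+1$ up to normalization. Evaluating at $t=-1$ gives $1+1-1-1-1-1-1+1+1=-1$, so $\Delta_P(-1)\equiv\pm1\pmod 8$ and $\operatorname{Arf}(P)=0$ by \eqref{eq:Arf_invariant}.
\end{itemize}

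Theorem \ref{thm:Kim-Ruberman} then yields an orientation-preserving homeomorphism $h:Z_P(\varphi)\to Z$ that restricts to the natural identification on $\partial Z_P(\varphi)=\partial\nu B=\partial Z$. Extend $h$ by the identity over $\nu B$. This gives an orientation-preserving homeomorphism $(Q_P(\varphi),B)\cong(Q,B)$ carrying $B$ to $B$. Composing with the diffeomorphism from the first step gives $(Q,B)\cong(Q,B\#\rho P)$.

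The main point requiring care is the compatibility of boundary identifications. The diffeomorphism from Proposition \ref{prop:Hughes-Kim-Miller} must be the identity on $\nu B$, or at least preserve $\nu B$ as a pair. This holds because the Hughes--Kim--Miller construction is a connected sum with $(S^4_P(\varphi),S)$ performed away from the surgery region. The connected sum sphere $S$ becomes part of $B\#\rho P$, and $\nu B$ on the $Q$ side is untouched, so the identification of $\partial Z_P(\varphi)$ with $\partial\nu(B\#\rho P)$ is the natural one. The Arf computation is routine, and the rest follows directly.
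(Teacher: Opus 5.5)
Your proposal is correct and follows the paper's proof essentially step for step. The steps are: the knot surgery description from Proposition \ref{prop:Hughes-Kim-Miller}, $\pi_1\cong\mathbb{Z}_2$ for both complements via Proposition \ref{prop:Miyazawa_fundamental_group}, $\operatorname{Arf}(P)=0$ from $\Delta_P(-1)=\pm1$, and Theorem \ref{thm:Kim-Ruberman} on $Z=Q-\operatorname{int}\nu B$, glued with $\operatorname{id}_{\nu B}$ and composed with the diffeomorphism of pairs.

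The boundary-compatibility worry in your last paragraph is unnecessary. The Hughes--Kim--Miller map is not literally the identity on all of $\nu B$, since the connected-sum ball meets $B$. But you only use that it is a diffeomorphism of pairs, which gives $Q_P(\varphi)-B\cong Q-(B\#\rho P)$ for the $\pi_1$ computation and suffices for the final composition.
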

\begin{proof}
    Let $Z:=Q-\operatorname{int}\nu B$, where $\nu B$ is a small closed tubular neighborhood of $B$ in $Q$. Proposition \ref{prop:Hughes-Kim-Miller} gives an orientation-preserving diffeomorphism $F:(Q_P(\varphi),B)\to (Q,B\#\rho P)$ of pairs, where the knot surgery is performed along a torus $T\subset \operatorname{int}Z$ so that $\nu B$ remains unchanged in the knot surgery manifold $Q_P(\varphi)$. Thus, $F(\nu B)$ is a tubular neighborhood $\nu(B\#\rho P)$ of $B\# \rho P$ in $Q$, and $F$ restricts to a diffeomorphism $Z_P(\varphi)\to Q-\operatorname{int}\nu (B\#\rho P)$. It follows from Proposition \ref{prop:Miyazawa_fundamental_group} that both $Z\simeq Q-B$ and \[
    Z_P(\varphi)\cong Q-\operatorname{int} \nu (B\#\rho P) \simeq Q-(B\# \rho P)
    \]
    have fundamental groups isomorphic to $\mathbb{Z}_2$. It is well-known that $\Delta_P(-1)=1$, so the formula \eqref{eq:Arf_invariant} gives $\operatorname{Arf}(P)=0$. Theorem \ref{thm:Kim-Ruberman} therefore shows that the natural identification $\partial Z_P(\varphi)\cong \partial Z$ extends to an orientation-preserving homeomorphism $Z_P(\varphi)\cong Z$. Gluing with $\operatorname{id}_{\nu B}$ gives an orientation-preserving homeomorphism $(Q_P(\varphi),B)\cong (Q,B)$. Since $(Q_P(\varphi),B)\cong (Q,B\#\rho P)$ via $F$, this completes the proof. 
\end{proof}

Now we can give a proof of Theorem \ref{thm:topological_equivalence}.
\begin{proof}[Proof of Theorem \ref{thm:topological_equivalence}]
    Let $Q:=X/\langle c\rangle$. The quotient map $p:X\to Q$ is a double cover branched along $B:=p(X^c)$ \cite[Lemma 7.5]{Hambleton-Hausmann-2011}. Since $\pi_1(X-X^c)=1$ by hypothesis, we have $\pi_1(Q-B)\cong \mathbb{Z}_2$, and a meridian of any component of $B$ represents the nontrivial element of $\pi_1(Q-B)$. 

    For each $n\geq 1$, write $(X_n,c_n):=(X,c)\#n(\Sigma,\iota)$, and let $F_n\subset X_n$ be the fixed locus $(X_n)^{c_n}$. We have $X_n/\langle c_n\rangle =X/\langle c\rangle \# n(\Sigma /\langle \iota\rangle) =Q\# nS^4\cong Q$, and the projection $p_n:X_n\to Q$ is a double cover branched along the connected sum $B\# n(\rho P)$, with covering involution $c_n$. Using Proposition \ref{prop:Miyazawa_fundamental_group} and Corollary \ref{cor:homeomorphism_of_pairs} inductively, we get an orientation-preserving homeomorphism \[
    G_n:(Q,B\# n(\rho P))\longrightarrow (Q,B)
    \]
    for each $n\geq 1$. Elementary covering space theory shows that the restriction $G_n:Q-(B\# n(\rho P))\cong Q-B$ lifts to a homeomorphism $\tilde{G}_n:X_n-F_n\to X-X^c$ satisfying $p\circ \tilde{G}_n=G_n\circ p_n|_{X_n-F_n}$ and $\tilde{G}_n\circ c_n|_{X_n-F_n}=c\circ \tilde{G}_n$. We now extend $\tilde{G}_n$ to a map $X_n\to X$ by defining $\tilde{G}_n(x)$ to be the unique point in $p^{-1}(G_n(p_n(x)))$ for each $x\in F_n$. Then it is straightforward to see that $\tilde{G}_n:X_n\to X$ is an orientation-preserving homeomorphism satisfying $\tilde{G}_n\circ c_n=c\circ \tilde{G}_n$. This completes the proof.     
\end{proof}

The following observation will be used in the proof of Theorem \ref{thm:connected_sum}. 

\begin{remark}\label{rmk:small_ball}
Let $(X,c)$ be a pair satisfying the conditions of Theorem \ref{thm:topological_equivalence}, and let $D\subset X$ be a sufficiently small invariant closed ball centered at a fixed point. If all equivariant connected sums with $(\Sigma,\iota)$ are performed away from $D$, then the topological equivalence $(X\#n\Sigma, c\# n\iota)\cong (X,c)$ of Theorem \ref{thm:topological_equivalence} can be chosen to be the identity on $D$. 

To see this, consider the quotient pair $(Q,B)$ and let $E\subset Q$ denote the image of $D$. We may assume that $E$ is contained in $\operatorname{int}\nu B$. Taking the connected sums away from $E$, the proof of Proposition \ref{prop:Hughes-Kim-Miller} shows that the resulting diffeomorphism $(Q_P(\varphi),B)\cong (Q,B\# \rho P)$ is the identity on $E$. Then the proof of Corollary \ref{cor:homeomorphism_of_pairs} shows that a homeomorphism $(Q,B)\cong (Q,B\#\rho P)$ can be chosen to be the identity on $E$. Repeating this process gives a homeomorphism $(Q,B\# n(\rho P))\cong (Q,B)$ which is the identity on $E$. Now the result follows by choosing the lift to be the identity on $D$ in the proof of Theorem \ref{thm:topological_equivalence}. 

With a little extra care, we can do the same thing for finitely many balls. Suppose $D_1,\dots,D_k\subset X$ are pairwise disjoint small invariant closed balls centered at fixed points, and that all equivariant connected sums with $(\Sigma,\iota)$ are performed away from these balls. We claim that, after shrinking some $D_i$'s if necessary, the topological equivalence $(X\#n\Sigma, c\# n\iota)\cong (X,c)$ of Theorem \ref{thm:topological_equivalence} can be chosen to be the identity on each $D_i$. Let $E_1,\dots,E_k$ be the images of these balls in $Q$. As above, we obtain a homeomorphism $(Q,B\#n(\rho P))\cong (Q,B)$ which is the identity on each $E_i$. But the problem is that we cannot assume that its lift $\tilde{G}:(X\#n\Sigma,c\#n\iota)\cong (X,c)$ is simultaneously the identity on each $D_i$; $\tilde{G}$ is either the identity or the covering involution $c$ on $D_i$. We can correct this as follows. Suppose $\tilde{G}=c$ on $D_i$. Choose local coordinates $(u,z)\in \mathbb{R}^2\times \mathbb{C}$ so that $c(u,z)=(u,-z)$ on $D_i$. Let $\beta:\mathbb{R}\to \mathbb{R}$ be a smooth function that is identically $1$ near $0$ and supported on $(-\epsilon,\epsilon)$ for some small $\epsilon>0$, and define $F_i:D_i\to D_i$ by $F_i(u,z)=(u,e^{i\pi \beta(|u|^2+|z|^2)}z)$; $F_i$ equals $c$ on a smaller ball $D_i'$ in $D_i$ and is the identity outside a slightly larger ball. $F_i$ extends by identity to a diffeomorphism $F_i:X\to X$ commuting with $c$. Now the composition $F_i\circ \tilde{G}$ is a homeomorphism $(X\#n\Sigma,c\#n\iota)\cong (X,c)$ restricting to the identity on $D_i'$. Repeating this finitely many times proves our claim.
\end{remark}

The conclusion of Corollary \ref{cor:homeomorphism_of_pairs} can be strengthened as follows. 
\begin{corollary}\label{cor:isotopy}
    Let $Q$ be a closed, connected, oriented, smooth $4$-manifold, and let $B\subset Q$ be a smoothly embedded closed surface. If $\pi_1(Q-B)\cong \mathbb{Z}_2$ and a meridian of every component of $B$ represents the nontrivial element, then the surfaces $B,B\#\rho P\subset Q$ are topologically isotopic. Furthermore, for a small standard closed ball-disk pair $(E,E\cap B)\subset (Q,B)$, if the connected sum $B\#\rho P$ is performed away from $E$, then $B,B\#\rho P\subset Q$ are topologically isotopic by an isotopy rel $E$. 
\end{corollary}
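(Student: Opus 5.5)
The plan is to upgrade the homeomorphism of pairs from Corollary \ref{cor:homeomorphism_of_pairs} to one that acts trivially on $H_*(Q;\mathbb{Z})$. Then we quote the Perron--Quinn theorem, that a self-homeomorphism of a closed simply-connected topological $4$-manifold inducing the identity on homology is topologically isotopic to the identity. This is the route announced in the introduction. The relative statement will be handled the same way, using the relative isotopy result of Orson--Powell.

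\textbf{Step 1.} Build the homeomorphism $G:(Q,B\#\rho P)\to(Q,B)$ as a composite of two maps.
\begin{enumerate}
\item The first map is the diffeomorphism $F^{-1}:(Q,B\#\rho P)\to(Q_P(\varphi),B)$ from Proposition \ref{prop:Hughes-Kim-Miller}. It comes from the sphere--torus link in $S^4$ of \cite[Proposition 2.1]{Hughes-Kim-Miller-2024}, with all the surgery taking place inside a ball $S^4-\text{pt}$ summed into $Q$. So $F$ is the identity outside a $4$-ball $W\subset Q$ that meets $B$ in a disk.
\item The second map is the homeomorphism $h:Z_P(\varphi)\to Z$ from Theorem \ref{thm:Kim-Ruberman}, extended by $\operatorname{id}_{\nu B}$. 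By the final remark in that proof, $h$ is homotopic rel boundary to $f$, and $f$ is the identity on $Z-\operatorname{int}\nu T$.
\end{enumerate}

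\textbf{Step 2.} Show that $G$ acts as the identity on homology. Since $H_*(W)$ is trivial, classes of $H_*(Q)$ can be represented by cycles in $Q-W$ (the only class needing care is the fundamental class, which is preserved because $G$ is orientation-preserving). We may assume the torus $T$ lies in $W$. Then on any cycle $\alpha$ in $Q-W$ the map $F$ is the identity, and $h$ is homotopic to $f$, which is the identity on $\alpha$. Hence $G_*=\operatorname{id}$.

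Nontrivial second homology inside $S^1\times E(P)$ would be the danger here. But $Q_P(\varphi)$ restricted to $W$ is a homotopy ball by the construction, so its $H_2$ comes entirely from the complement. I expect this bookkeeping to be the main obstacle: one has to identify $Q$ with $Q_P(\varphi)$ via $F$ compatibly outside $W$, and make sure the homotopy $h\simeq f$ respects this.

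\textbf{Step 3.} Conclude the unrelative statement. The quotient $Q$ is simply-connected: since $\pi_1(Q-B)\cong\mathbb{Z}_2$ is generated by meridians, and meridians die in $Q$, we get $\pi_1(Q)=1$. By Perron \cite{Perron-1986} and Quinn \cite{Quinn-1986}, $G$ is topologically isotopic to $\operatorname{id}_Q$. Therefore $G^{-1}$ is isotopic to the identity and carries $B$ to $B\#\rho P$, so the two surfaces are topologically isotopic.

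\textbf{Step 4.} For the relative version, choose $W$ and $T$ disjoint from $E$. Then $G$ is the identity on $E$, exactly as in Remark \ref{rmk:small_ball}, and it still acts trivially on homology. The pair $Q-\operatorname{int}E$ is simply-connected with boundary $S^3$, and $G$ restricts to a homeomorphism of it that is the identity on the boundary and trivial on $H_*$. The relative isotopy theorem of Orson--Powell \cite{Orson-Powell-2025} then gives an isotopy to the identity rel $E$. This yields the isotopy of $B$ to $B\#\rho P$ rel $E$.
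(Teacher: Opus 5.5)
Your proposal is correct and follows essentially the same route as the paper. The composite is homotopic, via $h\simeq f$ rel boundary, to a map that is the identity off the connected-sum ball, so it acts trivially on $H_*(Q;\mathbb{Z})$; Perron--Quinn then gives the isotopy, and Orson--Powell applied to $Q-\operatorname{int}E$ gives the relative version. For that relative statement, make explicit, as the paper does, that $E$ is taken inside $\operatorname{int}\nu B$: the Kim--Ruberman homeomorphism $h$ is only known to be the identity on $\partial Z$, not on $Z-\operatorname{int}\nu T$, so it is the gluing with $\operatorname{id}_{\nu B}$ that makes the composite fix $E$.
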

\begin{proof} Let $Z:=Q-\operatorname{int}\nu B$. The homeomorphism $(Q,B)\cong (Q,B\#\rho P)$ of Corollary \ref{cor:homeomorphism_of_pairs} is given by the composition \[
(Q,B)\xleftarrow[\cong]{F} (Q_P(\varphi),B)\xrightarrow[\cong]{G} (Q,B\#\rho P),
\]
where:  \begin{itemize}[leftmargin=2em]
\item $G$ comes from Proposition \ref{prop:Hughes-Kim-Miller}; it is obtained from a diffeomorphism $(S^4_P(\varphi),S)\cong (S^4,\rho P)$ by taking connected sums with $(Q,B)$. Thus, if $D\subset Q$ is a closed $4$-ball used in forming the connected sum, then $G$ restricts to the identity on $Q-\operatorname{int}D$. 
\item $F$ is obtained by gluing a homeomorphism $h:Z_P(\varphi)\to Z$ such that $h|_\partial=\operatorname{id}$ with the identity map of $\nu B$. The homeomorphism $h$ comes from Theorem \ref{thm:Kim-Ruberman}, and its proof shows that $h$ is homotopic rel boundary to a map $f:Z_P(\varphi)\to Z$ such that $f|_{Z-\operatorname{int}\nu T}=\operatorname{id}$, where $T\subset \operatorname{int}Z$ is the torus where the knot surgery is performed. 
\end{itemize}
Set $F':=f\cup \operatorname{id}_{\nu B}:Q_P(\varphi)\to Q$; it is homotopic to $F$. The composition $F'\circ G^{-1}:Q\to Q$ restricts to the identity on $Q-\operatorname{int}D$, so it induces the identity on homology. Thus, $F\circ G^{-1}$ also induces the identity on homology. Now, note that $Q$ is simply-connected. It was shown by Perron \cite{Perron-1986} and Quinn \cite[Theorem 1.1]{Quinn-1986} that a self-homeomorphism on a simply-connected, closed, topological $4$-manifold inducing the identity on integral homology is topologically isotopic to the identity map. Hence $F\circ G^{-1}$ is topologically isotopic to the identity. This completes the proof of the first statement.

For the second statement, we may assume that $E\subset \operatorname{int}\nu B$ and that the ball $D$ chosen above is disjoint from $E$. Then the homeomorphism $F\circ G^{-1}$ is the identity on $E$, and the restriction of $F\circ G^{-1}$ to $Q^0:=Q-\operatorname{int}E$ induces the identity on homology. Hence $F\circ G^{-1}|_{Q^0}$ is isotopic to $\operatorname{id}_{Q^0}$ rel boundary by \cite[Corollary C(1)]{Orson-Powell-2025}. Extending the isotopy by the identity map of $E$ completes the proof. 
\end{proof}

Combining this with Proposition \ref{prop:Miyazawa_fundamental_group} and using induction gives the following result, which will be used in the proofs of Theorems \ref{thm:isotopy} and \ref{thm:connected_sum}.
\begin{corollary}\label{cor:isotopy_general}
    Let $Q$ be a closed, connected, oriented, smooth $4$-manifold, and let $B\subset Q$ be a smoothly embedded closed surface. If $\pi_1(Q-B)\cong \mathbb{Z}_2$ and a meridian of every component of $B$ represents the nontrivial element, then the surfaces $B\#n(\rho P)\subset Q$ for $n\geq 0$ are all topologically isotopic. Furthermore, for a small standard closed ball-disk pair $(E,E\cap B)\subset (Q,B)$, if the connected sums with $\rho P$ are performed away from $E$, then $B$ and $B\#n(\rho P)\subset Q$ are topologically isotopic by an isotopy rel $E$. 
\end{corollary}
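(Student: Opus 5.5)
The plan is induction on $n$, using Corollary \ref{cor:isotopy} at each step and Proposition \ref{prop:Miyazawa_fundamental_group} to keep the hypotheses in force. Set $B_0:=B$ and $B_n:=B_{n-1}\#\rho P = B\# n(\rho P)$. Here the $n$-th connected sum with $(S^4,\rho P)$ is performed in a small ball-disk pair on $B_{n-1}$ that is disjoint from the fixed pair $(E,E\cap B)$ and from the previous summing regions.

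The first step is to check that each $B_{n-1}$ satisfies the hypotheses of Corollary \ref{cor:isotopy}. For $n=1$ this is the assumption. If $\pi_1(Q-B_{n-1})\cong\mathbb{Z}_2$ with every meridian nontrivial, then Proposition \ref{prop:Miyazawa_fundamental_group} gives the same for $B_n=B_{n-1}\#\rho P$. Hence every $B_{n-1}$ qualifies.

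Next, apply Corollary \ref{cor:isotopy} to the pair $(Q,B_{n-1})$. We may take the small ball-disk pair to be $(E,E\cap B_{n-1})$. This is legitimate because all connected sums happen away from $E$, so $E\cap B_{n-1}=E\cap B$ and $(E,E\cap B_{n-1})$ is still a small standard ball-disk pair on $B_{n-1}$. The corollary then gives a topological isotopy rel $E$ from $B_{n-1}$ to $B_n$. Concatenating the isotopies from $B_0$ to $B_1$, through to $B_{n-1}$ to $B_n$, produces an ambient isotopy rel $E$ from $B$ to $B_n$. The final homeomorphism is the composition of the end-maps, and it is isotopic to the identity rel $E$. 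Since topological isotopy is an equivalence relation, all the $B_n$ are mutually topologically isotopic, and this proves both statements.

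The only subtle point is the bookkeeping: the same $E$ must serve at every stage, and "the connected sum" $B_{n-1}\#\rho P$ in Corollary \ref{cor:isotopy} must be the particular one used to define $B_n$. I would handle this by fixing all the summing balls in advance, pairwise disjoint and disjoint from $E$. I would then note that Corollary \ref{cor:isotopy} applies to any such choice, because its proof only requires the connected-sum ball $D$ to avoid $E$.
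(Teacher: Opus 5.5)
Your proposal is correct and is essentially the paper's argument: induct on $n$, using Proposition \ref{prop:Miyazawa_fundamental_group} to keep the hypothesis on $\pi_1(Q-B\#k(\rho P))$ in force, applying Corollary \ref{cor:isotopy} (with the same $E$, which the summing regions avoid) at each step, and composing the resulting homeomorphisms that are isotopic to the identity rel $E$. Your bookkeeping remark, that the corollary only needs the connected-sum ball $D$ to avoid $E$ and $E$ to lie in the unchanged part of the tubular neighborhood, is exactly the point that makes the induction go through.
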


\section{Real Seiberg--Witten theory}\label{sec:RSW}

We recall the basic facts from Real Seiberg--Witten theory on $4$-manifolds; see \cite{Miyazawa-2023, Konno-Miyazawa-Taniguchi-2024, Baraglia-2026} for details. Throughout this section, we assume that $X$ is a closed, connected, oriented, smooth $4$-manifold and $c:X\to X$ is an orientation-preserving smooth involution whose fixed locus is a nonempty surface. Fix a $c$-invariant Riemannian metric $g$ on $X$. We use the following notation. \begin{itemize}[leftmargin=2em]
    \item For $k\geq 0$, $b_k(X)^{-c}$ denotes the dimension of the $(-1)$-eigenspace of $c^*$ on $H^k(X;\mathbb{R})$.
    \item $b_+(X)^{-c}$ denotes the dimension of the $(-1)$-eigenspace $H^2_+(X;\mathbb{R})^{-c^*}$ of $c^*$ on the space $H^2_+(X;\mathbb{R})$ of $g$-self-dual harmonic $2$-forms on $X$. 
    \item For $k\geq 0$, $\Omega^k(X,i\mathbb{R})^{-c^*}$ denotes the space of imaginary-valued $k$-forms $a$ with $c^*a=-a$. 
    \item $\Omega^+(X,i\mathbb{R})^{-c^*}$ denotes the space of imaginary-valued $g$-self-dual $2$-forms $\eta$ on $X$ with $c^*\eta=-\eta$.
\end{itemize}
We also assume throughout the section that $b_1(X)^{-c}=0$.

\subsection{The Real Seiberg--Witten invariant}
Let $\mathfrak{s}$ be a spin$^c$ structure on $X$ with spinor bundle $\mathbb{S}=\mathbb{S}^+\oplus \mathbb{S}^-$ and Clifford multiplication $\rho:T^*X\to \operatorname{End}_{\mathbb{C}}(\mathbb{S})$. We say that $\mathfrak{s}$ is a \emph{Real spin$^c$ structure} on $(X,c)$ if $c$ lifts to an anti-linear fiberwise isometry $I:\mathbb{S}\to \mathbb{S}$ preserving $\mathbb{S}^\pm$ with $I^2=\operatorname{id}$, compatible with the Clifford multiplication in the sense that $I(\rho(a)\psi) = \rho (c^*a) I(\psi)$ for $a\in \Omega^1(X)$ and $\psi\in \Gamma(\mathbb{S})$. There exists such a lift $I$ if and only if $c^*\mathfrak{s}$ is isomorphic to the conjugate $\overline{\mathfrak{s}}$, and there are exactly $2^{b_1(X)}$ distinct lifts up to isomorphism (here we are using that the fixed-point set of $c$ has codimension $2$ and that $b_1(X)^{-c}=0$). We denote by $\operatorname{Spin}^c_R(X,c)$ the set of isomorphism classes of all Real spin$^c$ structures on $(X,c)$. Therefore, the natural map \[
\operatorname{Spin}^c_R(X,c)\longrightarrow \operatorname{Spin}^c(X)
\]
forgetting the lift $I$ is a $2^{b_1(X)}$-to-$1$ map with image $\{\mathfrak{s}\in \operatorname{Spin}^c(X):c^*\mathfrak{s}\cong \overline{\mathfrak{s}}\}$. (The lift $I$ is always part of the data of a Real spin$^c$ structure.)

Suppose now $\mathfrak{s}$ is a Real spin$^c$ structure with a lift $I$ of $c$ as above. A spin$^c$ connection $A$ is said to be \emph{Real} if $I^*A:=I\circ A\circ I=A$. For a fixed Real spin$^c$ connection $A_0$ and $a\in \Omega^1(X,i\mathbb{R})$, we have $I^*(A_0+a)=A_0-c^*a$, and the space $\mathcal{A}_R$ of all Real spin$^c$ connections is the affine space $A_0+\Omega^1(X,i\mathbb{R})^{-c^*}$. For a Real spin$^c$ connection $A$, the Dirac operator $D_A:\Gamma(\mathbb{S}^+)\to \Gamma(\mathbb{S}^-)$ satisfies $I^- D_A=D_AI^+$ where $I^\pm=I|_{\mathbb{S}^\pm}$, and the curvature $F_A\in \Omega^2(X,i\mathbb{R})$ of the induced $\operatorname{U}(1)$-connection on the determinant line bundle $\det \mathbb{S}^+$ satisfies $c^*F_A=-F_A$, i.e., $F_A\in \Omega^2(X,i\mathbb{R})^{-c^*}$. A spinor $\psi\in \Gamma(\mathbb{S})$ is said to be \emph{Real} if $I\psi =\psi$. The space of Real $\pm$-spinors is denoted by $\Gamma(\mathbb{S}^\pm)^I$. For $\eta\in \Omega^2_+(X,i\mathbb{R})^{-c^*}$, the (perturbed) \emph{Real Seiberg--Witten equations} are \begin{equation}\label{eq:perturbed_RSW_equations}
  D_A\psi=0, \quad F_A^+=q(\psi)+\eta \quad ((A,\psi)\in \mathcal{A}_R\times \Gamma(\mathbb{S}^+)^I).  
\end{equation}
Here $q$ is the quadratic map given by $\rho (q(\psi))=(\psi\otimes \psi^*)_0$, where $(\psi\otimes \psi^*)_0$ denotes the trace-free part of $\psi\otimes \psi^*\in \operatorname{End}_{\mathbb{C}}(\mathbb{S}^+)$; the natural extension $\rho:\Lambda^*T^*X\otimes \mathbb{C}\to \operatorname{End}_{\mathbb{C}}(\mathbb{S})$ of the Clifford multiplication identifies $\Lambda^2_+T^*X\otimes i\mathbb{R}$ with the space of traceless Hermitian endomorphisms of $\mathbb{S}^+$. 

The \emph{Real gauge group} $\mathcal{G}_R:=\{u\in C^\infty(X,S^1):u\circ c=\bar{u}\}$ acts on $\mathcal{A}_R\times \Gamma(\mathbb{S}^+)^I$ by $u\cdot (A,\psi)=(A-u^{-1}du, u\psi)$. The Real Seiberg--Witten equations are $\mathcal{G}_R$-equivariant. A Real Seiberg--Witten solution $(A,\psi)$ is said to be \emph{reducible} if $\psi\equiv 0$ and \emph{irreducible} otherwise. The stabilizer of a reducible solution is $\{\pm 1\}$, and irreducible solutions have trivial stabilizer. The moduli space of solutions modulo $\mathcal{G}_R$ is denoted by $\mathcal{M}_R:=\mathcal{M}_R(X,c,\mathfrak{s},g,\eta)$. Its (real) expected dimension is $d(\mathfrak{s})-b_+(X)^{-c}$, where \[
d(\mathfrak{s}):=\operatorname{ind}_{\mathbb{C}}D_A=\frac{c_1(\mathfrak{s})^2-\sigma(X)}{8}.
\]
If $b_+(X)^{-c}>0$, there are no reducible solutions for a generic choice of $\eta$, and $\mathcal{M}_R$ is a smooth compact manifold of dimension $d(\mathfrak{s})-b_+(X)^{-c}$. When $b_+(X)^{-c}>1$, the cobordism class of $\mathcal{M}_R$ is independent of the choice of $g$ and $\eta$; when $b_+(X)^{-c}=1$ a chamber has to be chosen. The quotient space $\mathcal{B}_R^*:=(\mathcal{A}_R\times (\Gamma(\mathbb{S}^+)^I-\{0\}))/\mathcal{G}_R$ has the homotopy type of $\mathbb{RP}^\infty$. If $u$ is the generator of $H^1(\mathcal{B}_R^*;\mathbb{Z}_2)$, then the \emph{mod-$2$ Real Seiberg--Witten invariant} is defined by \[
\operatorname{SW}_R(X,c,\mathfrak{s}):=\left\langle u^{d(\mathfrak{s})-b_+(X)^{-c}}, [\mathcal{M}_R]\right\rangle \in \mathbb{Z}_2,
\]
assuming $d(\mathfrak{s})\geq b_+(X)^{-c}>1$. In particular, if $d(\mathfrak{s})=b_+(X)^{-c}$, then $\operatorname{SW}_R(X,c,\mathfrak{s})$ is exactly the number of gauge equivalence classes of solutions modulo $2$. If $d(\mathfrak{s})=b_+(X)^{-c}>0$ is even, then the moduli space $\mathcal{M}_R$ can be oriented, so we can define the \emph{integer-valued Real Seiberg--Witten invariant} \begin{equation}\label{eq:integer_SW}
    \operatorname{SW}_{R,\mathbb{Z}}(X,c,\mathfrak{s})=\sum_{[A,\psi]\in \mathcal{M}_R} \operatorname{sign}\,[A,\psi]\in \mathbb{Z},
\end{equation}
which is well-defined up to an overall sign, and reduces to $\operatorname{SW}_R(X,c,\mathfrak{s})$ modulo $2$. 

\subsection{The Real degree}
We now consider the Real version of the Bauer--Furuta invariant \cite{Bauer-Furuta-2004}. The identity component $\mathcal{G}_R^0$ of $\mathcal{G}_R$ is given by $\{e^{if}:f\in C^\infty(X,\mathbb{R}), \,f\circ c=-f\}$, and $\mathcal{G}_R/\mathcal{G}_R^0\cong \mathbb{Z}_2=\{\pm 1\}$. For the restricted $\mathcal{G}_R^0$-action on $\mathcal{A}_R$, every orbit has a unique representative $A\in \mathcal{A}_R$ satisfying $d^*(A-A_0)=0$. We may therefore work on the global Coulomb slice $\ker d^* \oplus \Gamma(\mathbb{S}^+)^I\subset \Omega^1(X,i\mathbb{R})^{-c^*}\oplus \Gamma(\mathbb{S})^I$. For a fixed integer $k\geq 4$, take Sobolev completions \begin{align*}
    \mathcal{U}_k&:=\left\{a\in L^2_k(X,T^*X\otimes i\mathbb{R}):c^*a=-a, \, d^*a=0 \right\}, \\
    \mathcal{U}'_{k-1}&:=L^2_{k-1}(X,\Lambda^2_+T^*X\otimes i\mathbb{R})^{-c^*}= \left\{\eta \in L^2_{k-1}(X,\Lambda^2_+T^*X\otimes i\mathbb{R}):c^*\eta=-\eta\right\}, \\
    \mathcal{V}_k^+&:=L^2_k(X,\mathbb{S}^+)^I=\left\{\psi\in L^2_k(X,\mathbb{S}^+):I\psi=\psi \right\}, \\  \mathcal{V}_{k-1}^-&:=L^2_{k-1}(X,\mathbb{S}^-)^I=\left\{ \psi\in L^2_{k-1}(X,\mathbb{S}^-):I\psi=\psi\right\}.
\end{align*}
The \emph{Real monopole map} (in Coulomb gauge) is defined by \begin{align*}
    \mu_R: \mathcal{U}_k\oplus \mathcal{V}_k^+ & \longrightarrow \mathcal{U}_{k-1}'\oplus \mathcal{V}_{k-1}^-, \\
    (a,\psi) &\longmapsto \left(2d^+a-q(\psi)  ,\, D_{A_0}\psi+\rho(a)\psi\right).
\end{align*}
Note that $(a,\psi)$ is a solution to \eqref{eq:perturbed_RSW_equations} if and only if $\mu_R(a,\psi)=(\eta-F_{A_0}^+,0)$. The map $\mu_R$ is a compact perturbation of the real Fredholm map \[
2d^+\oplus D_{A_0}^I:\mathcal{U}_k\oplus \mathcal{V}_k^+ \longrightarrow \mathcal{U}_{k-1}'\oplus \mathcal{V}_{k-1}^-,
\]
where $D_{A_0}^I$ denotes the restriction of $D_{A_0}$ to Real spinors. We have \[
\ker d^+=0, \quad \operatorname{coker}d^+\cong H^2_+(X;i\mathbb{R})^{-c^*}, \quad \operatorname{ind}_{\mathbb{R}}D_{A_0}^I=\operatorname{ind}_{\mathbb{C}}D_{A_0}=d(\mathfrak{s}),
\]
and in particular, $\operatorname{ind}_{\mathbb{R}}(2d^+\oplus D_{A_0}^I)=d(\mathfrak{s})-b_+(X)^{-c}$. The map $\mu_R$ is $\mathbb{Z}_2$-equivariant, where the nontrivial element of $\mathbb{Z}_2$ acts trivially on $\mathcal{U}_k$, $\mathcal{U}_{k-1}'$, and acts as $-1$ on $\mathcal{V}_k^+$, $\mathcal{V}_{k-1}^-$.

For the finite-dimensional approximation, choose sufficiently large finite-dimensional subspaces $U'\subset \mathcal{U}_{k-1}'$ and $V'\subset \mathcal{V}_{k-1}^-$, so that the natural maps $U'\to \operatorname{coker}d^+$ and $V'\to \operatorname{coker}D_{A_0}^I$ are surjective. Set $U:=(d^+)^{-1}(U')$ and $V:=(D_{A_0}^I)^{-1}(V')$; then $U$, $V$ are finite-dimensional real vector spaces with $\dim_{\mathbb{R}}U=\dim_{\mathbb{R}}U'-b_+(X)^{-c}$ and $\dim_{\mathbb{R}}V=\dim_{\mathbb{R}}V'+d(\mathfrak{s})$. Then the procedure of \cite{Bauer-Furuta-2004} gives a based $\mathbb{Z}_2$-equivariant map \[
f:S^{U\oplus V}\longrightarrow S^{U'\oplus V'}
\]
between the one-point compactifications. If we assume that $d(\mathfrak{s})=b_+(X)^{-c}$, then the source and target spheres have the same dimension. After choosing orientations on the vector spaces, the \emph{Real degree} is defined by forgetting the $\mathbb{Z}_2$-actions and taking the mapping degree of $f$: \[
\operatorname{deg}_R(X,c,\mathfrak{s}):=\deg (f)\in \mathbb{Z}.
\]
Its absolute value $|\deg_R(X,c,\mathfrak{s})|$ is independent of all the auxiliary choices that have been made, so it is an invariant of $(X,c,\mathfrak{s})$. It has the following properties \cite[Proposition 5.9]{Baraglia-2026}.
\begin{itemize}[leftmargin=2em]
    \item $\deg_R(X,c,\mathfrak{s})\neq 0$ implies $d(\mathfrak{s})$ is even. Indeed, since $\mathbb{Z}_2$ acts trivially on $U,U'$ and by $-1$ on $V,V'$, the $\mathbb{Z}_2$-equivariance of $f$ implies that $(-1)^{\dim_{\mathbb{R}}V} \deg (f)=(-1)^{\dim_{\mathbb{R}}V'}\deg(f)$; hence $\deg(f)\neq 0$ implies $\dim_{\mathbb{R}}V-\dim_{\mathbb{R}}V'=d(\mathfrak{s})$ is even.
    \item If $d(\mathfrak{s})=b_+(X)^{-c}$ is a positive even integer, then \begin{equation}\label{eq:degree_and_SW}
        |\deg_R(X,c,\mathfrak{s})|=2|\operatorname{SW}_{R,\mathbb{Z}}(X,c,\mathfrak{s})|.
    \end{equation}
    For a generic perturbation, the solution space in the global Coulomb slice is a double cover of $\mathcal{M}_R$, and the two points of each class in $\mathcal{M}_R$ have the same sign since $d(\mathfrak{s})$ is even. This explains the factor $2$.
\end{itemize}

\subsection{Computations}
We now summarize computational results that will be used in the proofs of the main results. We begin with the formula for equivariant connected sums. 
\begin{theorem}[{\cite[Theorem 9.2]{Baraglia-2026}}]\label{thm:connected_sum_formula}
    For $i=1,2$, let $X_i$ be a closed, connected, oriented, smooth $4$-manifold and $c_i:X_i\to X_i$ be an orientation-preserving smooth involution whose fixed locus is a nonempty surface with $b_1(X_i)^{-c_i}=0$. Suppose $\mathfrak{s}_i$ is a Real spin$^c$ structure on $(X_i,c_i)$ with $d(\mathfrak{s}_i)=b_+(X_i)^{-c_i}\in 2\mathbb{Z}$. Then \[
    |\deg_R(X_1\#X_2,c_1\#c_2,\mathfrak{s}_1\#\mathfrak{s}_2)|=|\deg_R(X_1,c_1,\mathfrak{s}_1)|\cdot |\deg_R(X_2,c_2,\mathfrak{s}_2)|.
    \]
\end{theorem}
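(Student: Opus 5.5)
The plan is a Bauer--Furuta style neck-stretching argument, adapted to the $\mathbb{Z}_2$-equivariant Real setting.

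\textbf{Setup.} Form $X_1\#X_2$ by removing small $c_i$-invariant balls $D_i$ centered at fixed points. Glue along the invariant neck $S^3\times[-T,T]$, on which the involution acts by reflection $(x_1,x_2,x_3,x_4)\mapsto(x_1,x_2,-x_3,-x_4)$. Using a $c$-invariant metric with positive scalar curvature on the neck, the Real spin$^c$ structures $\mathfrak{s}_1,\mathfrak{s}_2$ glue to $\mathfrak{s}_1\#\mathfrak{s}_2$. The lifts $I_1,I_2$ are identified over the neck; this is possible, and unique up to isomorphism on $S^3$, since the Real spin$^c$ structure on $S^3$ with its reflection is unique. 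Additivity of the index data is direct:
\[
d(\mathfrak{s}_1\#\mathfrak{s}_2)=d(\mathfrak{s}_1)+d(\mathfrak{s}_2), \qquad b_+^{-c}=b_+(X_1)^{-c_1}+b_+(X_2)^{-c_2}.
\]
Likewise $b_1^{-c}=0$ on the sum, so the degree is defined.

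\textbf{Main step.} Show that the Real monopole map of the connected sum is stably $\mathbb{Z}_2$-equivariantly homotopic to the smash product $\mu_{R,1}\wedge\mu_{R,2}$ of the finite-dimensional approximations. Concretely, I would follow Bauer's proof of the connected sum formula and restrict every step to the $I$-invariant and $(-c^*)$-invariant subspaces. The steps are:
\begin{itemize}[leftmargin=2em]
\item Write the monopole map on the sum as a family over a parameter $T\to\infty$.
\item Use cut-off functions that are chosen $c$-invariant.
\item Use the fact that the unperturbed operator on the Real part of the cylinder $S^3\times\mathbb{R}$ is invertible. This holds because there are no Real harmonic spinors or reducible harmonic forms there: positive scalar curvature on $S^3$ gives $H^1=0$ and no harmonic spinors.
\end{itemize}
This yields an equivariant homotopy, through proper maps, between the glued map and the disjoint-union map on $X_1\amalg X_2$, which is the product map. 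All the homotopies in Bauer's argument are built from linear splittings and cut-offs, and these commute with the involutions once the cut-offs are invariant. Each homotopy therefore restricts to the fixed subspaces of the antilinear involution, giving properness and compactness in the Real setting.

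\textbf{Degree computation.} Once the stable homotopy $f\simeq f_1\wedge f_2$ is in hand, forget the $\mathbb{Z}_2$-action. The mapping degree is multiplicative under smash products of maps between equidimensional spheres, so $|\deg f|=|\deg f_1|\cdot|\deg f_2|$. Two independence facts complete the argument: orientation choices only affect the sign, and the absolute degree does not depend on the finite-dimensional approximation, as recalled above.

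The hypothesis that $d(\mathfrak{s}_i)=b_+(X_i)^{-c_i}$ is even is used in two places. First, it makes each $\deg_R$ defined. Second, it ensures the smash product involves no sign ambiguity from reordering the $V$ and $U'$ factors. Even if such a reordering sign appeared, it would disappear after taking absolute values.

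\textbf{Main obstacle.} I expect the hardest step to be verifying the neck-stretching homotopy in the Real category. Specifically, one must check that the Real gauge group restricted to the neck, namely functions with $u\circ c=\bar u$ on $S^3$ with reflection, creates no extra component or stabilizer that would change the count by a factor. Since the fixed locus is codimension two and meets the neck in a circle, $\mathcal{G}_R/\mathcal{G}_R^0=\mathbb{Z}_2$ on each piece and on the sum. The glued framework should then match $\mathbb{Z}_2\times\mathbb{Z}_2$ reduced to the diagonal, exactly as in the ordinary $S^1\times S^1\to S^1$ gluing. I would resolve this by computing $H^0$ and $H^1$ of the Real deformation complex on the neck directly.
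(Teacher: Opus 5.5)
The paper gives no proof of its own. It quotes Baraglia's Theorem~9.2 (cf.\ Miyazawa's Corollary~2.14), and your plan is essentially the standard argument behind that citation: run Bauer's connected-sum argument $\mathbb{Z}_2$-equivariantly on the Real ($I$- and $-c^*$-invariant) slices to get $f\simeq f_1\wedge f_2$ stably, then use multiplicativity of degree under smash products.

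Two small corrections follow. First, Bauer's homotopy compares $(X_1\#X_2)\amalg S^4$ with $X_1\amalg X_2$, which are built from the same pieces, not $X_1\#X_2$ with $X_1\amalg X_2$ directly; so you also need that $S^4$ with its reflection has Real invariant a degree-$\pm1$ linear map, which positive scalar curvature gives. On the neck, invertibility also uses $H^0(S^3)^{-c}=0$, which is automatic in the Real setting, alongside $H^1(S^3)=0$ and the absence of harmonic spinors. Second, evenness is not what makes $\deg_R$ defined; the equality $d(\mathfrak{s}_i)=b_+(X_i)^{-c_i}$ does that. Evenness is not used anywhere in your argument either, because an odd $d(\mathfrak{s}_i)$ forces $\deg_R(X_i,c_i,\mathfrak{s}_i)=0$, and then both sides vanish.
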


See also \cite[Corollary 2.14]{Miyazawa-2023}. For complex conjugation on $\CPbar$, we have the following result.

\begin{theorem}[{\cite[Proposition 4.33]{Miyazawa-2023}}]\label{thm:conjugation} Let $\kappa:\CPbar\to \CPbar$ be complex conjugation. There exists a Real spin$^c$ structure $\mathfrak{t}$ on $(\CPbar,\kappa)$ with $c_1(\mathfrak{t})^2=-1$ such that $d(\mathfrak{t})=b_+(\CPbar)^{-\kappa}=0$ and \[
|\deg_R(\CPbar,\kappa,\mathfrak{t})|=1. 
\] 
\end{theorem}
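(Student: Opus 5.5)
The statement is Theorem~\ref{thm:conjugation}, on the Real degree of complex conjugation on $\CPbar$. I would prove it by making the Real Seiberg--Witten equations on $(\CPbar,\kappa)$ explicit and then computing the Real degree as a linear degree.

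\textbf{Step 1: topological bookkeeping.} We have $b_+(\CPbar)=0$, so $b_+(\CPbar)^{-\kappa}=0$. Also $b_1=0$, so the standing hypothesis $b_1^{-c}=0$ holds.

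\textbf{Step 2: choice of $\mathfrak{t}$.} Let $e\in H^2(\CPbar;\mathbb{Z})$ be the generator, with $e^2=-1$. Take $\mathfrak{t}$ to be a spin$^c$ structure with $c_1(\mathfrak{t})=\pm e$. Then $c_1(\mathfrak{t})^2=-1$ and
\[
d(\mathfrak{t})=\frac{-1-(-1)}{8}=0 .
\]
Complex conjugation acts by $-1$ on $H^2$, so $\kappa^*\mathfrak{t}$ has first Chern class $-c_1(\mathfrak{t})$. Since $H^2$ has no $2$-torsion, spin$^c$ structures are determined by $c_1$, hence $\kappa^*\mathfrak{t}\cong\overline{\mathfrak{t}}$. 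By the criterion recalled in Section~\ref{sec:RSW}, an anti-linear lift $I$ exists, and it is unique up to isomorphism because $b_1=0$. This gives a Real spin$^c$ structure $\mathfrak{t}$.

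\textbf{Step 3: the Real degree.} Choose a $\kappa$-invariant metric of positive scalar curvature, for instance the Fubini--Study metric, which $\kappa$ preserves. Since $b_+=0$, we have $\mathcal{U}'\cong\operatorname{im} d^+$ and $d^+$ is injective on $\mathcal{U}$, so there is no $H^2_+$ contribution. By the Weitzenböck formula, positive scalar curvature and $F_A^+=q(\psi)$ force $\psi=0$.

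To handle the Dirac part, choose the reference connection $A_0$ to be Real. Then Lichnerowicz gives $\ker D_{A_0}=0$, and since the complex index is $d(\mathfrak{t})=0$, also $\operatorname{coker}D_{A_0}=0$. Restricting to Real spinors, $D^I_{A_0}$ is therefore an isomorphism.

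Consequently the linear part $2d^+\oplus D^I_{A_0}$ is an isomorphism. The moduli space consists of a single reducible solution, cut out transversally: the unperturbed equations (with $\eta=0$, since $\mathcal{U}'$ has no cokernel to avoid) have only the reducible solution, and it is regular.

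Now run the finite-dimensional approximation with $V=V'$ and $U=U'$ identified via the linear part. The map $f$ is homotopic, through proper maps, to the suspension of the linear isomorphism $(d^+)|_U\oplus D^I_{A_0}|_V$. Properness along the homotopy holds because the quadratic term is compact and there are no solutions at infinity, again by the Weitzenböck argument applied to the rescaled family $t\mapsto$ (linear part $+\,t\cdot$ quadratic part). Hence $\deg f=\pm1$, so $|\deg_R(\CPbar,\kappa,\mathfrak{t})|=1$.

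\textbf{Main obstacle.} The delicate step is justifying that the Bauer--Furuta finite-dimensional approximation is homotopic to the linearization. This needs an a priori bound on solutions uniform in $t\in[0,1]$; positive scalar curvature supplies it, since any solution of the $t$-scaled equations has $\psi=0$.
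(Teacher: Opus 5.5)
The paper gives no proof of this statement; it quotes it from Miyazawa. Your route is the standard way to obtain it: a $\kappa$-invariant positive scalar curvature metric, no irreducibles, and a Bauer--Furuta map homotopic to the linear isomorphism $2d^+\oplus D^I_{A_0}$ because $d(\mathfrak{t})=b_+(\CPbar)^{-\kappa}=0$. Steps 1--2 are fine.

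\textbf{A step that fails as written.} For an arbitrary Real reference connection, the Lichnerowicz formula $D_{A_0}^*D_{A_0}=\nabla_{A_0}^*\nabla_{A_0}+\tfrac{s}{4}+\tfrac12\rho(F_{A_0}^+)$ does not give $\ker D_{A_0}=0$, because the curvature term can outweigh $s/4$. The same choice matters for your zero-set argument. Since $\mu_R(a,\psi)=(\eta-F^+_{A_0},0)$ encodes the equations with perturbation $\eta$, the zeros of $\mu_R$ and of your rescaled maps solve the equations perturbed by $\eta=F^+_{A_0}$. The Weitzenb\"ock argument does not directly apply to those.

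You must choose $A_0$ Real with $F^+_{A_0}=0$, which is possible. Take any spin$^c$ connection $A$ whose curvature is the harmonic representative; this is anti-self-dual because $b_+=0$. Set $A_0=\tfrac12(A+I^*A)$. Since $F_{I^*A}=-\kappa^*F_A$ and $\kappa$ is an orientation-preserving isometry, $F^+_{A_0}=\tfrac12(F_A^+-\kappa^*F_A^+)=0$. Alternatively, correct any Real connection by $a\in\Omega^1(\CPbar,i\mathbb{R})^{-\kappa^*}$ with $2d^+a=-F^+_{A_0}$, using $\operatorname{coker}d^+\cong H^2_+(\CPbar;i\mathbb{R})^{-\kappa^*}=0$. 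This choice is legitimate because $|\deg_R|$ is independent of auxiliary choices.

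\textbf{The uniform bound.} What you flag as the main obstacle is not literally ``every zero of the $t$-scaled map has $\psi=0$.'' Write $\mu_t:=L+tN$ with $L=2d^+\oplus D^I_{A_0}$ and $N(a,\psi)=(-q(\psi),\rho(a)\psi)$. The finite-dimensional approximation needs $\mu_t^{-1}(B_R)$ to be bounded uniformly in $t\in[0,1]$, where $B_R$ is the ball of radius $R$.

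With $F^+_{A_0}=0$ this follows from the scaling identity $\mu_t(x)=t^{-1}\mu_1(tx)$. By Weitzenb\"ock, $\mu_1^{-1}(0)=\{0\}$; moreover $L$ is invertible and $N$ is compact and quadratic. A contradiction argument then gives $\|\mu_1(y)\|\geq\|y\|/C$ whenever $\|\mu_1(y)\|\leq 1$. Combined with the ordinary boundedness of $\mu_1$, this bounds $\|x\|$ on $\mu_t^{-1}(B_R)$ independently of $t$.

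With these two repairs the approximation is homotopic to the linear isomorphism $U\oplus V\to U'\oplus V'$, so $|\deg_R(\CPbar,\kappa,\mathfrak{t})|=1$.
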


Combining Theorems \ref{thm:connected_sum_formula} and \ref{thm:conjugation} yields the following formula for equivariant blow-ups.
\begin{theorem}\label{thm:blow_up_formula}
    Let $X$ be a closed, connected, oriented, smooth $4$-manifold and $c:X\to X$ be an orientation-preserving smooth involution whose fixed locus is a nonempty surface. Suppose $\mathfrak{s}$ is a Real spin$^c$ structure on $(X,c)$ with $d(\mathfrak{s})=b_+(X)^{-c}\in 2\mathbb{Z}$. Then \[
    |\deg_R(X\#\CPbar,c\#\kappa,\mathfrak{s}\#\mathfrak{t})|=|\deg_R(X,c,\mathfrak{s})|. \]
\end{theorem}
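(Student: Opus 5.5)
The blow-up formula will be the special case $X_1=X$, $X_2=\CPbar$ of the connected-sum formula, Theorem \ref{thm:connected_sum_formula}, combined with the computation in Theorem \ref{thm:conjugation}. Take $(X_1,c_1,\mathfrak{s}_1)=(X,c,\mathfrak{s})$ and $(X_2,c_2,\mathfrak{s}_2)=(\CPbar,\kappa,\mathfrak{t})$, where $\mathfrak{t}$ is the Real spin$^c$ structure of Theorem \ref{thm:conjugation}. The equivariant connected sum is formed at fixed points, which is possible since both fixed loci are nonempty surfaces; the fixed locus of $\kappa$ is $\mathbb{RP}^2$. Then $|\deg_R(X\#\CPbar,c\#\kappa,\mathfrak{s}\#\mathfrak{t})|=|\deg_R(X,c,\mathfrak{s})|\cdot 1$.

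The remaining work is to check the hypotheses of Theorem \ref{thm:connected_sum_formula} for both summands.

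\textbf{The summand $(X,c,\mathfrak{s})$.} We have $d(\mathfrak{s})=b_+(X)^{-c}\in 2\mathbb{Z}$ by assumption. The condition $b_1(X)^{-c}=0$ is the standing assumption of this section.

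\textbf{The summand $(\CPbar,\kappa,\mathfrak{t})$.} Theorem \ref{thm:conjugation} gives $d(\mathfrak{t})=b_+(\CPbar)^{-\kappa}=0$, which is even. Also $b_1(\CPbar)=0$, so $b_1(\CPbar)^{-\kappa}=0$ holds trivially.

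\textbf{The glued structure.} I would note that $\mathfrak{s}\#\mathfrak{t}$ is a Real spin$^c$ structure on the connected sum, obtained by gluing the anti-linear lifts over the invariant neck; this is implicit in the statement of Theorem \ref{thm:connected_sum_formula}. The additivity $d(\mathfrak{s}\#\mathfrak{t})=d(\mathfrak{s})+d(\mathfrak{t})=d(\mathfrak{s})$ follows from the additivity of $c_1^2$ and $\sigma$. Similarly, $b_+(X\#\CPbar)^{-c\#\kappa}=b_+(X)^{-c}$, since $b_+(\CPbar)=0$. Hence $\deg_R$ of the sum is defined.

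There is no real obstacle here. The only point needing care is that the connected sum is taken at fixed points, so that the fixed locus of $c\#\kappa$ is again a nonempty surface (the connected sum of $X^c$ with $\mathbb{RP}^2$ along the relevant component). This is exactly the setting of Theorem \ref{thm:connected_sum_formula}.
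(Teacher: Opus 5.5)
Your proposal is correct and is exactly the paper's argument: apply the connected-sum formula (Theorem \ref{thm:connected_sum_formula}) with second summand $(\CPbar,\kappa,\mathfrak{t})$, and use $|\deg_R(\CPbar,\kappa,\mathfrak{t})|=1$ from Theorem \ref{thm:conjugation}. Your hypothesis checks are right: $d(\mathfrak{t})=b_+(\CPbar)^{-\kappa}=0$, and $b_1(X)^{-c}=0$ is the standing assumption of that section.
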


For the Miyazawa homotopy $4$-sphere $(\Sigma,\iota)$ (see Section \ref{subsec:Miyazawa}), we have the following result. 
\begin{theorem}[{\cite[Theorem 4.32]{Miyazawa-2023}}]\label{thm:Miyazawa_sphere}
    The Miyazawa sphere $(\Sigma,\iota)$ has a unique Real spin$^c$ structure $\mathfrak{s}_\Sigma$, and it satisfies $d(\mathfrak{s}_\Sigma)=b_+(\Sigma)^{-\iota}=0$ and \[
    |\deg_R(\Sigma,\iota,\mathfrak{s}_\Sigma)|=3.
    \]
\end{theorem}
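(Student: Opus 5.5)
The plan is to deduce this statement from the general machinery for Real degrees, as in Miyazawa's Section 4, rather than reprove it by hand. There are three things to establish: uniqueness of the Real spin$^c$ structure, the index identity $d(\mathfrak{s}_\Sigma)=b_+(\Sigma)^{-\iota}=0$, and the value $|\deg_R|=3$. The first two are formal; the third is where the content lies.

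\textbf{Uniqueness.} Since $\Sigma$ is a homotopy $4$-sphere, $H^2(\Sigma;\mathbb{Z})=0$, so $\Sigma$ carries a unique spin$^c$ structure, and $\iota^*\mathfrak{s}\cong\overline{\mathfrak{s}}$ holds automatically. Moreover $b_1(\Sigma)=0$, so the forgetful map $\operatorname{Spin}^c_R(\Sigma,\iota)\to\operatorname{Spin}^c(\Sigma)$ is $2^{0}$-to-$1$. Hence there is exactly one Real spin$^c$ structure $\mathfrak{s}_\Sigma$.

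\textbf{Index identity.} We have $c_1(\mathfrak{s}_\Sigma)^2=0$, $\sigma(\Sigma)=0$ and $b_+(\Sigma)=0$, so $d(\mathfrak{s}_\Sigma)=0=b_+(\Sigma)^{-\iota}$.

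\textbf{The Real degree.} In this setting the finite-dimensional approximation is a $\mathbb{Z}_2$-map $S^{U\oplus V}\to S^{U'\oplus V}$ with $\dim U=\dim U'$. There is no wall-crossing, so the degree is a genuine invariant. To compute it I would stretch along the boundary of a neighborhood of a point of the fixed surface, or more naturally realize $(\Sigma,\iota)$ as a double branched cover. The branch set $\rho P$ is the $1$-roll spin of $P=P(-2,3,7)$. Following Miyazawa, I would decompose $(S^4,\rho P)$ as a union of $S^1\times(B^3,\text{arc})$ pieces twisted by the roll. Upstairs this expresses $\Sigma$ as a mapping torus type construction: it is built from $S^1\times \Sigma_2(B^3,\text{arc})$, i.e.\ $S^1$ times the punctured double branched cover $Y_P$ of $P$, with $Y_P=\Sigma(2,3,7)$ (up to orientation), glued to a $D^2\times S^2$-type piece.

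The Real degree can then be computed from a gluing formula. It is an $S^1$-family (mapping torus) count of the Real monopole Floer theory of $Y_P$ with its involution, twisted by the roll diffeomorphism. Equivalently, one uses the Real Frøyshov/Lefschetz-type count: the degree equals the Lefschetz number of the roll monodromy acting on the Real monopole Floer homology of $(\Sigma(2,3,7),\tau)$. For $\Sigma(2,3,7)$ the Real Seiberg--Witten solutions correspond to Real flat connections/irreducibles that can be enumerated explicitly, giving rank contributions summing to $3$ in absolute value. An independent check is available: by Theorem~\ref{thm:connected_sum_formula} together with the fact that $\Sigma$ is exotic only if the degree is not $\pm1$, the value must be odd. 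Indeed, $d=0$ and the $\mathbb{Z}_2$-fixed part is the map on $S^{U}$, whose degree is $\pm1$ since there are no reducible issues. The mod $2$ equivariant degree relation then forces $\deg\equiv 1\pmod 2$, so $3$ is consistent.

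\textbf{Main obstacle.} The main obstacle is the actual computation yielding exactly $3$. This requires identifying the Real Floer homology of $\Sigma(2,3,7)$ with its covering-type involution and determining the roll's action on it. The first approach I would try is to exploit the fact that $\rho P$ is a roll-spun knot, so that $\Sigma$ fibres over $S^1$ away from a sphere. The Bauer--Furuta map then reduces to a fixed-point count over the Real moduli of $\Sigma(2,3,7)$, where each nondegenerate Real irreducible contributes $\pm1$ and the reducible contributes $1$.
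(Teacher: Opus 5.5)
\textbf{The value $3$ is asserted, not proved.} Note first that the paper gives no proof of this statement; it quotes it from Miyazawa's Theorem 4.32.

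Your first two steps are correct and complete. Since $H^2(\Sigma;\mathbb{Z})=0$, there is a unique spin$^c$ structure, and it is necessarily isomorphic to its conjugate. The fixed set of $\iota$ is the $2$-sphere over $\rho P$ and $b_1(\Sigma)=0$, so the forgetful map $\operatorname{Spin}^c_R(\Sigma,\iota)\to\operatorname{Spin}^c(\Sigma)$ is a bijection. Finally, $c_1^2=\sigma=b_+=0$ gives $d(\mathfrak{s}_\Sigma)=b_+(\Sigma)^{-\iota}=0$.

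The equality $|\deg_R(\Sigma,\iota,\mathfrak{s}_\Sigma)|=3$ is the only nontrivial content, and your proposal does not establish it. You name a route, but each load-bearing step is only asserted:
\begin{enumerate}
\item[(i)] You need a gluing theorem expressing the Real Bauer--Furuta degree of $\Sigma=(\text{mapping torus of the lifted roll on }\Sigma(2,3,7)^\circ)\cup(D^2\times S^2)$ as a Lefschetz number of the lifted roll. As you state it, a Lefschetz number on monopole Floer homology is not even finite, because of the reducible tower. One needs a finite model, such as a Real Seiberg--Witten Floer homotopy type, together with a gluing theorem for it.
\item[(ii)] You need the actual Real critical set of $(\Sigma(2,3,7),\tau)$ for some metric and perturbation: how many Real irreducibles there are, that they are nondegenerate, and with which signs or gradings they enter. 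They are not ``Real flat connections''; only the reducible is flat.
\item[(iii)] You need the action of the lifted roll on this Floer homotopy type. It cannot be ignored: with trivial monodromy one gets the spin of $\Sigma(2,3,7)$, which is not even simply connected.
\end{enumerate}
``Rank contributions summing to $3$'' is the conclusion, not an argument.

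Your parity check is also argued incorrectly. The right input is the congruence $\deg f\equiv\deg f^{\mathbb{Z}_2}\pmod 2$ for a $\mathbb{Z}_2$-map that is free off the fixed sphere. Here $f^{\mathbb{Z}_2}=2d^+\colon S^U\to S^{U'}$ is exactly the reducible part, and it has degree $\pm1$ because $b_+(\Sigma)^{-\iota}=0$. Neither the connected sum formula nor exoticness of $\Sigma$ enters. Exoticness of $\Sigma$ is unknown, and the nonstandardness of the pair $(\Sigma,\iota)$ is a consequence of the value $3$, so invoking it would be circular.

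In any case, oddness cannot separate $3$ from $1$. Degree $1$ is precisely the case in which $m(X,c_n)=3^n m(X,c)$ would distinguish nothing. As written, your proposal relies on Miyazawa for its one nontrivial claim, just as the paper does, but without saying so.
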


Combining this with Theorem \ref{thm:connected_sum_formula} gives the following corollary; see also the proof of \cite[Theorem 11.4]{Baraglia-2026}.

\begin{corollary}\label{cor:equivariant_connected_sum_degree}
    Let $X$ be a closed, connected, oriented, smooth $4$-manifold and $c:X\to X$ be an orientation-preserving smooth involution whose fixed locus is a nonempty surface with $b_+(X)^{-c}\in 2\mathbb{Z}$. Then for each integer $n\geq 1$, the connected sum map \begin{align*}
        \operatorname{Spin}^c_R(X,c)&\longrightarrow \operatorname{Spin}^c_R(X\#n\Sigma, c\#n\iota), \\
        \mathfrak{s} & \longmapsto \mathfrak{s}\# n\mathfrak{s}_\Sigma,
    \end{align*}
    is a bijection. Moreover, if $\mathfrak{s}$ is a Real spin$^c$ structure on $(X,c)$ satisfying $d(\mathfrak{s})=b_+(X)^{-c}$, then $\mathfrak{s}\#n\mathfrak{s}_\Sigma$ also satisfies $d(\mathfrak{s}\# n\mathfrak{s}_\Sigma)=b_+(X\#n\Sigma)^{-(c\#n\iota)}$, and \[
     |\deg_R(X\#n\Sigma ,c\#n\iota,\mathfrak{s}\#n\mathfrak{s}_\Sigma)|=3^n|\deg_R(X,c,\mathfrak{s})|.
    \]
\end{corollary}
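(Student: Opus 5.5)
The proof has two parts: a bijection on Real spin$^c$ structures, and a multiplicative degree computation that follows from Theorem \ref{thm:connected_sum_formula}. Throughout, I induct on $n$, so it suffices to treat a single summand $(X',c'):=(X\#(n-1)\Sigma,\,c\#(n-1)\iota)$ joined with $(\Sigma,\iota)$. Note that $b_1(X)^{-c}=0$ is the standing assumption of this section. Also $\Sigma$ is a homotopy sphere, so $H^1$ and $H^2$ of $X\#n\Sigma$ agree with those of $X$. Hence $b_1$, $b_+$ and $b_+^{-c}$ are unchanged, and the hypotheses are preserved at every stage.

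\emph{Bijection.} Since $H^2(\Sigma)=0$, the spin$^c$ structures on $X\#\Sigma$ correspond bijectively to those on $X$ via $\mathfrak{s}\mapsto \mathfrak{s}\#\mathfrak{s}_\Sigma^{\rm ord}$, where $\mathfrak{s}_\Sigma^{\rm ord}$ is the unique spin$^c$ structure on $\Sigma$. This correspondence respects the condition $c^*\mathfrak{s}\cong\overline{\mathfrak{s}}$, because the first Chern class lives entirely on the $X$ side and $H^2$ is torsion-compatible. The Real lifts are counted by $2^{b_1}$ on both sides, since $b_1(X\#\Sigma)=b_1(X)$ and, by Theorem \ref{thm:Miyazawa_sphere}, $\Sigma$ has a unique Real structure. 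It remains to see that gluing is injective on lifts. The lifts on $X$ form a torsor over the $c$-twisted real line bundles, classified by $H^1$ data. Equivariant gluing at a fixed point, together with the uniqueness of $\mathfrak{s}_\Sigma$, identifies this torsor with the corresponding one on $X\#\Sigma$, because $H^1(X\#\Sigma)\cong H^1(X)$ equivariantly. A map of torsors over isomorphic groups that is equivariant for the group action is a bijection. This gives the bijection $\operatorname{Spin}^c_R(X,c)\to\operatorname{Spin}^c_R(X\#n\Sigma,c\#n\iota)$. I expect this step to be the main point needing care, specifically checking that connected sum of lifts is compatible with the torsor actions. Everything else is formal.

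\emph{Numerics and degree.} We have $c_1(\mathfrak{s}\#n\mathfrak{s}_\Sigma)^2=c_1(\mathfrak{s})^2$ and $\sigma(X\#n\Sigma)=\sigma(X)$, so $d$ is unchanged. Likewise $b_+^{-c}$ is unchanged, because $b_+(\Sigma)^{-\iota}=0$. Therefore the condition $d=b_+^{-c}$ persists. Moreover $d(\mathfrak{s})=b_+(X)^{-c}$ is even by hypothesis, and $d(\mathfrak{s}_\Sigma)=0$ is also even. So Theorem \ref{thm:connected_sum_formula} applies at each inductive step. Combined with $|\deg_R(\Sigma,\iota,\mathfrak{s}_\Sigma)|=3$ from Theorem \ref{thm:Miyazawa_sphere}, each step multiplies the degree by $3$, and the inductive step yields $3^n|\deg_R(X,c,\mathfrak{s})|$.
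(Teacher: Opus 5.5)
Your proposal is correct and follows the paper's route. The paper obtains this corollary by combining Theorem \ref{thm:Miyazawa_sphere} (uniqueness of $\mathfrak{s}_\Sigma$, $d(\mathfrak{s}_\Sigma)=b_+(\Sigma)^{-\iota}=0$, $|\deg_R(\Sigma,\iota,\mathfrak{s}_\Sigma)|=3$) inductively with the product formula of Theorem \ref{thm:connected_sum_formula}, and it treats the bijection as immediate from $\Sigma$ being a homotopy sphere with a unique Real spin$^c$ structure. Your torsor argument makes that bijection explicit in a reasonable way. The group acting on the Real lifts of a fixed spin$^c$ structure is more precisely $\{u:X\to S^1 : u\circ c=u\}$ modulo $\{v\cdot(v\circ c)\}$ rather than ``twisted real line bundles''. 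Extending such a $u$ by $1$ across the neck still gives exactly the equivariance you need.
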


Finally, for anti-symplectic involutions, we have the following result.
\begin{theorem}\label{thm:symplectic_degree}
    Let $X$ be a closed, connected, oriented, smooth $4$-manifold and $c:X\to X$ be an orientation-preserving smooth involution whose fixed locus is a nonempty surface. Suppose $b_1(X)=0$ and $\omega$ is an orientation-compatible symplectic form on $X$ with $c^*\omega=-\omega$. Then the canonical spin$^c$ structure $\mathfrak{s}_\omega$ is a Real spin$^c$ structure on $(X,c)$ with \[
    d(\mathfrak{s}_\omega)=b_+(X)^{-c}=\frac{b_+(X)+1}{2}>0.
    \]
    If this integer is even, then \[ 
    |\operatorname{SW}_{R,\mathbb{Z}}(X,c,\mathfrak{s}_\omega)|=1 \quad \text{and} \quad     |\deg_R(X,c,\mathfrak{s}_\omega)|=2.
    \]
\end{theorem}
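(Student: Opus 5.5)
The plan has three parts: show that $\mathfrak{s}_\omega$ is Real, compute $d(\mathfrak{s}_\omega)$ and $b_+(X)^{-c}$, and then run Taubes' argument inside the Real setting.

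\emph{Step 1: $\mathfrak{s}_\omega$ is Real.} Choose a $c$-invariant almost complex structure $J$ compatible with $\omega$ such that $c$ is anti-$J$-holomorphic, i.e.\ $c_*J=-Jc_*$. Starting from an invariant metric $g$, the polar decomposition used to build $J$ from $(\omega,g)$ is natural. Since $c^*\omega=-\omega$ and $c^*g=g$, the resulting $J$ satisfies $c^*J=-J$. Then $\mathbb{S}^+=\Lambda^{0,0}\oplus\Lambda^{0,2}$ and $\mathbb{S}^-=\Lambda^{0,1}$. Because $c$ is anti-holomorphic, pullback by $c$ sends $(0,q)$-forms to $(q,0)$-forms. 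Composing with complex conjugation gives an anti-linear map $I(\alpha)=\overline{c^*\alpha}$ preserving $\Lambda^{0,q}$. It squares to the identity and is compatible with Clifford multiplication $\sqrt2(\bar\partial\text{-part}\wedge-\iota)$. Hence $c^*\mathfrak{s}_\omega\cong\overline{\mathfrak{s}_\omega}$ and $\mathfrak{s}_\omega$ is Real. Note also that $b_1(X)=0$ gives $b_1(X)^{-c}=0$, so the Real theory applies.

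\emph{Step 2: the numerical identities.} For the canonical class, $c_1(\mathfrak{s}_\omega)^2=2\chi+3\sigma$. Using $b_1=0$, this gives $d(\mathfrak{s}_\omega)=(2\chi+2\sigma)/8=(1+b_+)/2$.

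For $b_+(X)^{-c}$, choose the metric compatible with $J$. Then $\Lambda^+=\mathbb{R}\omega\oplus\operatorname{Re}(\Lambda^{2,0})$. The harmonic self-dual space contains $\omega$, which lies in the $(-1)$-eigenspace. The remaining $b_+-1$ dimensions are $\operatorname{Re}$ of harmonic sections of $K$ in the K\"ahler case. In general I would argue via the Lefschetz formula instead. We have $b_+^{c}-b_+^{-c}=\frac12(\operatorname{tr}c^*|_{H^2}+\ldots)$, and by the $G$-signature theorem, $\sigma(X/c)=\frac12(\sigma(X)+[X^c]^2)$. For an anti-symplectic involution the fixed surface $X^c$ is Lagrangian, so $[X^c]^2=-\chi(X^c)$. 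Combining with the Lefschetz number $\chi(X^c)=2+\operatorname{tr}c^*|_{H^2}$ (using that $c$ acts on $H^0,H^4$ by $+1$ and $b_1=0$) yields
\[b_+^{c}-b_-^{c}=\tfrac12(\sigma-\chi(X^c)),\qquad b_+^{c}+b_-^{c}=\tfrac12(b_2+\chi(X^c)-2).\]
Eliminating gives $b_+(X)^{c}=(b_+-1)/2$, hence $b_+(X)^{-c}=(b_++1)/2$. Positivity is immediate.

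\emph{Step 3: Taubes in the Real setting.} Take the perturbation $\eta=F^+_{A_0}-\frac{ir}{4}\omega+\ldots$ with $r\gg0$. This lies in the $(-c^*)$-invariant space because $\omega$ is anti-invariant and $A_0$ is Real. Taubes' analysis shows that the ordinary moduli space for this perturbation consists of the single gauge class of the canonical solution $(A_0,\sqrt r\cdot 1)$, and that it is nondegenerate. The canonical solution is fixed by the Real structure: $I(1)=1$ and $I^*A_0=A_0$. Any Real solution is an ordinary solution, so $\mathcal{M}_R$ consists of exactly this one point. Its nondegeneracy follows because the Real deformation complex is the fixed part of the ordinary one, which is already acyclic. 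Hence $|\operatorname{SW}_{R,\mathbb{Z}}|=1$ when $d(\mathfrak{s}_\omega)$ is even. The chamber issue for $b_+^{-c}=1$ is resolved by this choice of perturbation. Equation \eqref{eq:degree_and_SW} then gives $|\deg_R|=2$.

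The main obstacle is the uniqueness step. One must check that Real gauge equivalence does not split Taubes' unique ordinary gauge class into several Real classes. Two Real solutions that are ordinarily gauge equivalent via $u$ satisfy $u\circ c=\pm\bar u$-type relations. Because the stabilizer is trivial and $X^c\neq\emptyset$, evaluating at a fixed point forces $u\in\mathcal{G}_R$ up to $\pm1$, which is exactly the factor accounted for in the Coulomb slice.
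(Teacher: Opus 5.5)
Your proposal is correct. For the Seiberg--Witten part it is essentially the paper's argument:
\begin{itemize}
\item the Real lift $I(\alpha)=\overline{c^*\alpha}$;
\item Taubes' perturbation $F^+_{A_0}-tq(u_0)$, which is anti-invariant;
\item restricting Taubes' uniqueness and regularity to Real configurations, where the Real linearization is the restriction of the ordinary one to the fixed space of a commuting involution (the paper makes this explicit by averaging $(a-c^*a,\psi+I\psi)$);
\item then applying \eqref{eq:degree_and_SW}.
\end{itemize}

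Where you differ is the first statement. The paper simply cites the proof of Baraglia's Proposition 11.2. You instead derive $b_+(X)^{-c}=(b_+(X)+1)/2$ directly from three inputs: the Lefschetz formula $\chi(X^c)=2+\operatorname{tr}(c^*|_{H^2})$, the $G$-signature theorem, and the fact that the Lagrangian fixed surface has normal Euler number $-\chi(X^c)$. Adding your two relations gives $2b_+(X)^{c}=b_+(X)-1$, so the computation checks out. It makes the numerical part self-contained, at the cost of invoking the $G$-signature theorem for possibly nonorientable fixed surfaces. The paper uses that theorem anyway in Section \ref{sec:branched_covers}.

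Two points should be tightened.

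First, you assert $I^*A_0=A_0$ without justification, and you need it for the perturbation to be anti-invariant. The paper's reason is uniqueness: $A_0$ is the unique spin$^c$ connection with $\operatorname{pr}_{\underline{\mathbb{C}}}(\nabla_{A_0}u_0)=0$. Since $Iu_0=u_0$, the connection $I^*A_0$ satisfies the same condition, so $I^*A_0=A_0$.

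Second, your final uniqueness paragraph is muddled. Suppose $u\cdot(A_1,\psi_1)=(A_2,\psi_2)$ for Real irreducible solutions. Applying $I$ gives $\overline{u\circ c}\cdot(A_1,\psi_1)=(A_2,\psi_2)$. Hence $u^{-1}\overline{u\circ c}$ stabilizes an irreducible solution and is identically $1$, so $u\in\mathcal{G}_R$ exactly. Evaluating at a fixed point is neither needed nor sufficient: a relation $u\circ c=-\bar u$ only forces $u(x)=\pm i$ at fixed points, which is no contradiction. Irreducibility is what rules it out. The $\pm1$ you attribute to the Coulomb slice is a separate matter: it is the residual group $\mathcal{G}_R/\mathcal{G}_R^0\subset\mathcal{G}_R$, and it enters only through the factor $2$ in \eqref{eq:degree_and_SW}.

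Finally, your chamber remark is unnecessary. Evenness of $(b_+(X)+1)/2$ already forces $b_+(X)^{-c}\geq 2$.
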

\begin{proof}
    For the first statement, see the proof of \cite[Proposition 11.2]{Baraglia-2026}. For the second statement, assume that $d(\mathfrak{s}_\omega)$ is even; in particular, $b_+(X)>1$. Choose an $\omega$-compatible almost complex structure $J$ so that $c$ is $J$-anti-holomorphic, i.e., $dc\circ J=-J\circ dc$. We use the associated Riemannian metric $\omega(-,J-)$. Now we examine the result of Taubes \cite{Taubes-1994} that $\operatorname{SW}(X,\mathfrak{s}_\omega)=\pm 1$.
    
    For the canonical spin$^c$ structure $\mathfrak{s}_\omega$, the $\pm$-spinor bundles are $\mathbb{S}^+=\underline{\mathbb{C}}\oplus \Lambda^{0,2}_J$ and $\mathbb{S}^-=\Lambda^{0,1}_J$, where $\underline{\mathbb{C}}$ is the trivial bundle $X\times \mathbb{C}$. The formula $I(\alpha):=\overline{c^*\alpha}$ defines a Real lift for $\mathfrak{s}_\omega$, and the spinor $u_0:=(1,0)\in \Gamma(\mathbb{S}^+)$ is Real. There exists a unique spin$^c$ connection $A_0$ such that $\operatorname{pr}_{\underline{\mathbb{C}}}(\nabla_{A_0}u_0)=0$, where $\operatorname{pr}_{\underline{\mathbb{C}}}$ is the projection $\mathbb{S}^+\to \underline{\mathbb{C}}$, and its Dirac operator $D_{A_0}$ satisfies $D_{A_0}u_0=0$; see \cite[Section 1]{Taubes-1994} or \cite[Lemma 4.4]{Hutchings-Taubes-1999}. The spin$^c$ connection $I^*A_0$ also satisfies $\operatorname{pr}_{\underline{\mathbb{C}}}(\nabla_{I^*A_0}u_0)=0$, so uniqueness implies that $I^*A_0=A_0$, i.e., $A_0$ is a Real spin$^c$ connection. For $t>0$, set $\eta_t:=F_{A_0}^+-tq(u_0)\in \Omega^+(X,i\mathbb{R})^{-c^*}$. The pair $(A_0,\sqrt{t}u_0)$ is a Real irreducible solution to the perturbed (ordinary) Seiberg--Witten equations: \begin{equation}\label{eq:perturbed_SW}
        D_A\psi =0, \quad F_A^+=q(\psi)+\eta_t.
    \end{equation}
    Moreover, for sufficiently large $t$, the following statements hold: \begin{itemize}[leftmargin=2em]
        \item $(A_0,\sqrt{t}u_0)$ is the only solution to the equations \eqref{eq:perturbed_SW} up to gauge transformation.
        \item $(A_0,\sqrt{t}u_0)$ is a regular solution. This is equivalent to saying that the linearization of the Seiberg--Witten map on the global Coulomb slice (under suitable Sobolev completions) \begin{align}\label{eq:linearization} \begin{split}
          \mu:   \ker d^* \oplus \Gamma(\mathbb{S}^+) & \longrightarrow \Omega^2_+(X,i\mathbb{R})\oplus \Gamma(\mathbb{S}^-), \\
            (a,\psi) & \longmapsto \left(F_{A_0}^++2d^+a-q(\psi)-\eta_t, \, D_{A_0}\psi+\rho(a)\psi \right),
        \end{split}           
        \end{align}
        at $(0,\sqrt{t}u_0)$ is surjective, where $\rho$ is Clifford multiplication and $\ker d^*\subset \Omega^1(X,i\mathbb{R})$.
    \end{itemize}
    (Observe that these two statements imply $\operatorname{SW}(X,\mathfrak{s}_\omega)=\pm 1$. The perturbed equations \eqref{eq:perturbed_SW} were given in \cite[Section 5]{Taubes-1995}; the original proof in \cite{Taubes-1994} contains a more complicated argument. Detailed proofs of the two statements above can be found in \cite[pp.432-436]{Salamon-1999}.) 

    Using these, we can prove that the following statements hold for sufficiently large $t$: \begin{itemize}[leftmargin=2em]
        \item $(A_0,\sqrt{t}u_0)$ is the only Real solution to the equations \eqref{eq:perturbed_SW} up to Real gauge transformation.
        \item $(A_0,\sqrt{t}u_0)$ is regular as a Real solution. That is, the linearization of the map (again, under suitable Sobolev completions) \begin{align}\label{eq:linearization_Real} \begin{split}
          \mu^I: (\ker d^*)^{-c^*} \oplus \Gamma(\mathbb{S}^+)^I & \longrightarrow \Omega^2_+(X,i\mathbb{R})^{-c^*}\oplus \Gamma(\mathbb{S}^-)^I, \\
            (a,\psi) & \longmapsto \left(F_{A_0}^++2d^+a-q(\psi)-\eta_t, \, D_{A_0}\psi+\rho(a)\psi \right),
        \end{split}    
        \end{align}
        at $(0,\sqrt{t}u_0)$ is surjective. 
    \end{itemize}
    The first statement follows from an elementary observation that if two Real irreducible solutions are gauge equivalent by an ordinary gauge transformation $u$, then $u$ is a Real gauge transformation. For the second statement, observe that the map $\mu$ in \eqref{eq:linearization} is equivariant in the sense that $\mu(a,\psi)=(\xi,\phi)$ implies $\mu(-c^*a,I\psi)=(-c^*\xi, I\phi)$. It follows that its linearization \[
    L:\ker d^* \oplus \Gamma(\mathbb{S}^+)  \longrightarrow \Omega^2_+(X,i\mathbb{R})\oplus \Gamma(\mathbb{S}^-)
    \] 
    at $(0,\sqrt{t}u_0)$ is also equivariant, and that the linearization \[
    L^I:  (\ker d^*)^{-c^*} \oplus \Gamma(\mathbb{S}^+)^I  \longrightarrow \Omega^2_+(X,i\mathbb{R})^{-c^*}\oplus \Gamma(\mathbb{S}^-)^I
    \]
    of the map $\mu^I$ in \eqref{eq:linearization_Real} at $(0,\sqrt{t}u_0)$ is the restriction of $L$. The surjectivity of $L^I$ is now obvious: for any $(\xi,\phi)$ in the target, choose $(a,\psi)\in \ker d^*\oplus \Gamma(\mathbb{S}^+)$ with $L(a,\psi)=(\xi,\phi)$ using the surjectivity of $L$; then $(a-c^*a,\psi+I\psi)\in (\ker d^*)^{-c^*} \oplus \Gamma(\mathbb{S}^+)^I $ and $L^I(a-c^*a,\psi+I\psi)=2(\xi,\phi)$. This proves the second statement.

    The two statements imply that the Real moduli space $\mathcal{M}_R$ defined by \eqref{eq:perturbed_SW} is cut out transversely and consists of a single point. Consequently, the integer-valued Real Seiberg--Witten invariant \eqref{eq:integer_SW} must be $\operatorname{SW}_{R,\mathbb{Z}}(X,c,\mathfrak{s}_\omega)=\pm 1$. Finally, $|\deg_R(X,c,\mathfrak{s}_\omega)|=2$ follows from \eqref{eq:degree_and_SW}. \end{proof}

\section{Proofs of the main results}\label{sec:main_proofs}

In this section, we prove the main results of the paper. We begin with the following proposition.

\begin{proposition}\label{prop:Gluck_twist}
    Let $X$ be a closed, simply-connected, oriented, non-spin, smooth $4$-manifold. Then for every $n\geq 0$, there is an orientation-preserving diffeomorphism $X\# n\Sigma \cong X$, where $\Sigma$ is the Miyazawa sphere described in Section \ref{subsec:Miyazawa}. 
\end{proposition}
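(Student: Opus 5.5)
By induction on $n$ it suffices to prove $X\#\Sigma\cong X$ for every closed, simply-connected, non-spin $X$. Indeed, $X\#\Sigma$ is again closed, simply-connected and non-spin, since $\Sigma$ is a homotopy sphere. So if $X\#(n-1)\Sigma\cong X$, then
\[
X\#n\Sigma=(X\#(n-1)\Sigma)\#\Sigma\cong X\#\Sigma\cong X.
\]
The case $n=0$ is trivial.

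For the base step, the plan in the outline of the introduction is to use Kuhrman's observation \cite{Kuhrman-2025} that $\Sigma$ is obtained from $S^4$ by a Gluck twist along some smoothly embedded $2$-sphere $S\subset S^4$. I would then do the following.

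\begin{enumerate}
\item Write $\Sigma=S^4_S$, the Gluck twist of $S^4$ along $S$. Performing the connected sum away from $S$ gives $X\#\Sigma\cong X_S$, the Gluck twist of $X$ along $S\subset S^4\subset X\#S^4=X$. This sphere sits in a ball, so it is null-homologous and has trivial normal bundle.
\item Apply the criterion of Akbulut--Yasui \cite{Akbulut-Yasui-2013}. As I recall it, a Gluck twist of a $4$-manifold $X$ along a sphere $S$ does not change the diffeomorphism type when $S$ is null-homologous (or more generally $[S]$ is characteristic-related) and there is a sphere or immersed sphere in $X-S$ whose self-intersection is odd and which meets the relevant region suitably. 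The classical version of this is the fact that $X\#\mathbb{CP}^2$ or $X\#\CPbar$ absorbs Gluck twists: the twist can be undone by sliding across an odd-framed $2$-handle.
\item Verify the criterion's hypotheses. Since $X$ is simply-connected and non-spin, the intersection form is odd. So there is a class $\alpha\in H_2(X)$ with $\alpha\cdot\alpha$ odd, and since $\pi_1(X)=1$ it is represented by an immersed sphere, by Hurewicz together with general position. This sphere can be made disjoint from the ball containing $S$. Concretely, the standard argument takes the Gluck twist map $\tau$ on $S^2\times D^2$, which is rotation of $S^2$ along the $D^2$ loop. This map generates $\pi_1(SO(3))=\mathbb{Z}_2$, and it extends over the complement of $\nu S$ once the complement contains a suitable odd class that kills $w_2$ of the relevant $SO(3)$-bundle.
\end{enumerate}

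The main obstacle is step 3: matching the exact hypotheses of Akbulut--Yasui to our situation. Their theorem may demand an embedded (not immersed) odd sphere, or a sphere that intersects $S$ geometrically once. I would first try to invoke their result in the form "if $X-S$ contains a characteristic-type odd class with $\pi_1(X-S)=1$, then $X_S\cong X$." Here $\pi_1(X-S)=\pi_1(X)=1$ holds by general position, since $S$ is a $2$-sphere in a $4$-manifold and its meridian bounds in $S^4-S$ whenever $\pi_1(S^4-S)$ is normally generated by it. If that fails, I would fall back on this: a simply-connected non-spin $X$ with $\pi_1(X-S)$ appropriate contains an odd class, and I would use the simple-connectivity of $X-X^c$ hypothesis exactly as the outline does. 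The orientation-preserving claim follows because the Gluck twist diffeomorphism is built from maps preserving orientation.
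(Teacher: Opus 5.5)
\textbf{Verdict: genuine gap.} Your overall route is the paper's: reduce to $n=1$ by induction, write $\Sigma\cong (S^4)_S$ via Kuhrman, note $X\#\Sigma\cong (X\#S^4)_S$, and apply Akbulut--Yasui. But the proof consists entirely of verifying the Akbulut--Yasui hypothesis. Your step 3 leaves this unresolved, and the verification you sketch contains errors.

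\begin{itemize}
\item The claim $\pi_1(X-S)=\pi_1(X)=1$ is false. By Alexander duality $H_1(S^4-S;\mathbb{Z})\cong\mathbb{Z}$, generated by the meridian, so the meridian never bounds in $S^4-S$. Van Kampen gives $\pi_1(X-S)\cong\pi_1(S^4-S)$, which surjects onto $\mathbb{Z}$. General position only shows that $\pi_1(X-S)\to\pi_1(X)$ is onto.
\item An immersed sphere of odd square is not the right object. Its regular neighborhood has free fundamental group on the double points, so it does not supply a simply-connected piece.
\item The fallback to simple-connectivity of $X-X^c$ is irrelevant. This proposition involves no involution; in the paper that hypothesis is used only earlier, to deduce $\pi_1(X)=1$ before this proposition is applied.
\end{itemize}

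The missing observation is short. The criterion actually used, \cite[Corollary 1.3]{Akbulut-Yasui-2013}, reads: if $M-\operatorname{int}\nu S$ contains a compact, simply-connected, codimension-zero submanifold with odd intersection form, then $M_S\cong M$ orientation-preservingly.

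\begin{itemize}
\item Take $M=X\#S^4$, and let $B\subset X$ be the ball removed in forming the connected sum.
\item Since $S$ lies in the $S^4$ summand, $X-\operatorname{int}B$ sits inside $M-\operatorname{int}\nu S$.
\item $X-\operatorname{int}B$ is simply-connected because $X$ is.
\item Its intersection form is that of $X$. This form is odd because a closed simply-connected $4$-manifold is spin if and only if its form is even. That follows from Wu's formula, using that every class in $H^2(X;\mathbb{Z}_2)$ lifts to $H^2(X;\mathbb{Z})$ when $H_1(X)=0$.
\end{itemize}

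What must be simply-connected is this codimension-zero submanifold, not the complement of $S$. With this in place, your induction completes the argument.
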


To prove this proposition, we recall the Gluck twist operation on $4$-manifolds \cite{Gluck-1962}. Let $M$ be a closed, connected, oriented, smooth $4$-manifold, and let $S\subset M$ be a smoothly embedded $2$-sphere with a trivial normal bundle. Choose an identification $\nu S\cong S^2\times D^2$, and let $\gamma:S^2\times S^1\to S^2\times S^1$ be the diffeomorphism defined by \[
\gamma(x,e^{i\theta})=(R_\theta(x),e^{i\theta}),
\]
where $R_\theta$ is the rotation through angle $\theta$ about a fixed axis. Then the \emph{Gluck twist} of $M$ along $S$ is defined to be the $4$-manifold \[
M_S:=(M-\operatorname{int}\nu S)\cup _\gamma (S^2\times D^2).
\]
The following result of Akbulut and Yasui gives a sufficient condition for a Gluck twist to preserve the oriented diffeomorphism type of $M$.
\begin{theorem}[{\cite[Corollary 1.3]{Akbulut-Yasui-2013}}]\label{thm:Akbulut-Yasui}
    If $M-\operatorname{int}\nu S$ contains a compact, simply-connected, codimension-zero submanifold whose intersection form is odd, then there exists an orientation-preserving diffeomorphism $M_S\cong M$. 
\end{theorem}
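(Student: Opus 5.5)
Since Theorem \ref{thm:Akbulut-Yasui} is quoted from \cite{Akbulut-Yasui-2013}, here is how I would reprove it. The plan is to localize the Gluck twist to a neighborhood containing both $\nu S$ and the odd piece, and then to extend the twisting map over that neighborhood.

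\textbf{Localizing.} Let $W\subset M-\operatorname{int}\nu S$ be the given compact, simply-connected, codimension-zero submanifold with odd intersection form. Join $\nu S$ to $W$ by an embedded arc and form the boundary connected sum
\[
N:=\nu S\natural W\cong (S^2\times D^2)\natural W.
\]
The gluing of $S^2\times D^2$ in the Gluck twist only involves $\partial\nu S$. Choose the arc to meet $\partial\nu S$ in a small disk on which $\gamma$ can be isotoped to the identity. Then $M_S$ is obtained from $M$ by cutting out $N$ and regluing it via the map $\Gamma:=\gamma\natural\operatorname{id}_{\partial W}$ on $\partial N$. It therefore suffices to show that $\Gamma$ extends to a self-diffeomorphism of $N$, which would give $M_S\cong M$, orientation-preservingly.

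\textbf{The odd handle.} The map $\gamma$ is the ``rotation along the loop'' diffeomorphism coming from the generator of $\pi_1(SO(3))=\mathbb{Z}_2$. The classical fact behind Gluck's observation $M_S\#\mathbb{CP}^2\cong M\#\mathbb{CP}^2$ is that this loop becomes trivial once a $2$-handle with odd framing is available. Concretely, $\gamma$ extends over $(S^2\times D^2)\natural(\text{an odd-framed $2$-handle neighborhood})$, by sliding the $0$-framed $2$-handle of $\nu S$ twice over the odd-framed one. I would therefore find inside $W$ a $2$-handle, attached to $B^4$ along some knot, whose framing is odd. To get it, use simple connectivity to choose a handle decomposition of $W$ with no $1$-handles, after trading $1$-handles for $2$-handles as in Akbulut--Yasui. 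Then $W$ is built from $B^4$, $2$-handles, and higher handles. Since the intersection form on $H_2(W)$ is odd, some linear combination of $2$-handle cores has odd square. Handle slides realize changes of basis of the $2$-handles, so after sliding we may assume a single $2$-handle $h$ has odd framing.

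\textbf{Extending and the main obstacle.} Inside $\nu S\natural h\subset N$, I would use the Kirby-calculus move above to extend $\gamma\natural\operatorname{id}$ to a diffeomorphism $\Phi$ of $\nu S\natural h$. This move is: slide the $0$-framed unknot representing $\nu S$ over the odd-framed handle $h$ twice, then cancel the resulting full twist. Finally, extend $\Phi$ by the identity over the remaining handles of $W$, which are attached away from the support of $\Phi$ after an isotopy.

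The step I expect to be hardest is making this extension rigorous rather than just a handle-calculus picture. Two things must be checked. First, the boundary restriction of $\Phi$ must be exactly $\Gamma$ up to isotopy. Second, the other handles of $W$, including any $3$- and $4$-handles, must be attached in the complement of the support of $\Phi$. For the second point, the plan is to confine $\Phi$ to a collar of the $2$-handle attaching region.
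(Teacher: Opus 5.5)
The paper gives no proof of this statement; it quotes it from Akbulut--Yasui. Your sketch therefore has to stand on its own, and it has two genuine gaps. The localization is fine: it does suffice to extend $\gamma\natural\operatorname{id}$ over $\nu S\natural W$.

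\textbf{First gap: no $1$-handle-free decomposition.} You claim that simple connectivity lets you choose a handle decomposition of $W$ with no $1$-handles ``after trading $1$-handles for $2$-handles''. Trading a dotted circle for a $0$-framed unknot is a surgery on a circle. It produces a different manifold, not a new decomposition of $W$. Whether every simply-connected $4$-manifold, even a closed one, admits a decomposition without $1$-handles is a well-known open problem. So you have not produced what the rest of your argument uses, namely an embedded knot trace $B^4\cup_K h\subset W$ with odd framing. Other routes do not obviously supply one either. An odd class is represented by an immersed sphere, but its double points are exactly the $1$-handles of a kinky handle, and $\pi_1(W)=1$ alone does not remove them.

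\textbf{Second gap: the extension step.} This is the real content of the theorem. Your picture is correct for a $\pm1$-framed unknot. Slide it over the $0$-framed unknot $A$ of $\nu S$ so that it becomes a parallel copy of $A$. Blow it down, which puts a full twist on all strands through $A$. Then blow up again away from everything. The net effect realizes $\gamma$; this is the classical $\#\mathbb{CP}^2$ argument, and it relies on a blow-down.

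For a knotted attaching circle $K$ with odd framing $n$ and $|n|>1$, there is nothing to blow down. After one slide, $A$ has framing $n$ and linking number $n$ with $K$. So two slides of $A$ over $h$ that return $A$ to a $0$-framed unknot unlinked from $K$ must have opposite signs. Your description never computes the boundary diffeomorphism of this composite. It does not show that it is $\gamma\natural\operatorname{id}$ rather than something isotopic to the identity, and the parity of $n$ never enters. ``Cancel the resulting full twist'' restates the conclusion rather than proving it.

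You rightly flag this as the hardest step. Without an actual argument here, and a way around the $1$-handles, the proposal does not prove the statement.
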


\begin{proof}[Proof of Proposition \ref{prop:Gluck_twist}]
    It suffices to prove that $X\#\Sigma \cong X$. Kuhrman showed that there exists a smoothly embedded $2$-sphere $S$ in $S^4$ such that $\Sigma$ is orientation-preservingly diffeomorphic to the Gluck twist $(S^4)_S$ \cite[Corollary 2.9]{Kuhrman-2025}. Taking connected sum with $X$ away from the surgery region then gives $X\# \Sigma \cong X\# (S^4)_S \cong (X\#S^4)_S$. If $B\subset X$ is a closed $4$-ball used in forming the connected sum, then $X-\operatorname{int}B$ is a compact, simply-connected, codimension-zero submanifold of $(X\#S^4)-\operatorname{int}\nu S$ with odd intersection form. Theorem \ref{thm:Akbulut-Yasui} therefore gives an orientation-preserving diffeomorphism $(X\# S^4)_S\cong X\#S^4\cong X$. This completes the proof.
\end{proof}

We are now ready to prove Theorems \ref{thm:main} and \ref{thm:isotopy}. For a closed, connected, oriented, smooth $4$-manifold $X$ with an orientation-preserving smooth involution $c$ whose fixed locus is a nonempty surface and $b_1(X)^{-c}=0$, define \[
\mathcal{D}(X,c):=\left\{|\deg_R(X,c,\mathfrak{s})|\neq 0:\mathfrak{s}\in \operatorname{Spin}^c_R(X,c), \, d(\mathfrak{s})=b_+(X)^{-c} \right\}\subset \mathbb{Z}_{>0},
\]
where $d(\mathfrak{s})=(c_1(\mathfrak{s})^2-\sigma(X))/8$ as in Section \ref{sec:RSW}. Then define \[
m(X,c):=\begin{cases}
    \min \mathcal{D}(X,c) & \text{if $\mathcal{D}(X,c)$ is nonempty}, \\
    0 & \text{otherwise}.
\end{cases}
\]
Clearly, $\mathcal{D}(X,c)$ and $m(X,c)$ are invariants of the smooth equivalence class of $(X,c)$. 

\begin{proof}[Proof of Theorem \ref{thm:main}]
    The hypothesis $\pi_1(X-X^c)=1$ implies that $\pi_1(X)=1$. In particular, $b_1(X)=0$ and $b_1(X)^{-c}=0$. Moreover, the hypothesis $\deg_R(X,c,\mathfrak{s})\neq 0$ implies that $m(X,c)>0$ and $b_+(X)^{-c}$ is even. 

    For each $n\geq 1$, let $(X_n,\hat{c}_n)$ denote the equivariant connected sum $(X,c)\#n(\Sigma,\iota)$. Proposition \ref{prop:Gluck_twist} gives an orientation-preserving diffeomorphism $g_n:X_n\to X$. Set $c_n:=g_n\hat{c}_ng_n^{-1}:X\to X$ and $c_0:=c$. By Theorem \ref{thm:topological_equivalence}, there exists an orientation-preserving homeomorphism $\Phi_n:X_n\to X$ satisfying $\Phi_n\hat{c}_n=c\Phi_n$. Thus $(\Phi_ng_n^{-1})c_n=c(\Phi_ng_n^{-1})$; hence the involution $c_n$ is topologically equivalent to $c$. 

    Next, we have $\mathcal{D}(X,c_n)=\mathcal{D}(X_n,\hat{c}_n)=3^n\cdot \mathcal{D}(X,c)$ by Corollary \ref{cor:equivariant_connected_sum_degree}, and therefore $m(X,c_n)=3^n\cdot m(X,c)$. Since $m(X,c)>0$, the numbers $m(X,c_n)$ ($n=0,1,\dots$) are pairwise distinct. Thus, the involutions $c_n$ ($n=0,1,\dots$) are pairwise smoothly inequivalent. 

    Now, suppose that, for $n\geq 1$, there is an orientation-compatible symplectic form $\omega$ on $X$ with $c_n^*\omega=-\omega$. We clearly have $b_+(X)^{-c_n}=b_+(X_n)^{-\hat{c}_n}=b_+(X)^{-c}\in 2\mathbb{Z}$, so Theorem \ref{thm:symplectic_degree} shows that $2\in \mathcal{D}(X,c_n)$. But this is impossible since $\mathcal{D}(X,c_n)=3^n\cdot \mathcal{D}(X,c)$ so that every element of $\mathcal{D}(X,c_n)$ is a multiple of $3^n$. 

    Finally, fix $k\geq 1$ and let $\kappa$ be complex conjugation on $\CPbar$. Let $D\subset X$ be a small $c$-invariant closed ball centered at a fixed point, away from the connected sum regions with $(\Sigma,\iota)$'s. Remark \ref{rmk:small_ball} shows that the homeomorphisms $\Phi_n$ can be chosen to be the identity on $D$. Performing the equivariant blow-ups in $D$, we obtain homeomorphisms $\Phi_n\# k\operatorname{id}:X_n\#k\CPbar\to X\#k\CPbar$. It follows that all $c_n\# k\kappa$ are topologically equivalent to $c\# k\kappa$. Next, for the Real spin$^c$ structure $\mathfrak{s}$ on $(X,c)$ given in the hypothesis, Theorem \ref{thm:blow_up_formula} shows that \[
    |\deg_R(X\#k\CPbar,c\#k\kappa,\mathfrak{s}\#k\mathfrak{t})|=|\deg_R(X,c,\mathfrak{s})|>0.
    \]
    In particular, the set $\mathcal{D}(X\#k\CPbar,c\#k\kappa)$ is nonempty. Therefore, the same argument applied to $(X,c)$ above shows that the involutions $c_n\#k\kappa$ ($n=0,1,\dots$) are pairwise smoothly inequivalent. This completes the proof.
\end{proof}

\begin{proof}[Proof of Theorem \ref{thm:isotopy}]
    Set $B_0=B$ and set $B_n=B\#n(\rho P)\subset Q$ for each $n\geq 1$. The pair $(Q,B)$ satisfies the hypotheses of Corollary \ref{cor:isotopy_general} (see the proof of Theorem \ref{thm:topological_equivalence}), so each $B_n$ is topologically isotopic to $B_0$. On the other hand, the involution $c_n:X\to X$ constructed in the proof of Theorem \ref{thm:main} is precisely the covering involution of the double cover $X\to Q$ branched along $B_n$. Since $c_n$ and $c_m$ are smoothly inequivalent for $n\neq m$, the pairs $(Q,B_n)$ and $(Q,B_m)$ are not diffeomorphic for $n\neq m$. In particular, $B_n$ and $B_m$ are not smoothly isotopic if $n\neq m$. 
\end{proof}

To prove Corollaries \ref{cor:ordinary_SW_formulation} and \ref{cor:symplectic}, we recall the definitions of admissibility and strong admissibility given in the introduction. Let $X$ be a closed, oriented, smooth $4$-manifold, and let $c:X\to X$ be an orientation-preserving smooth involution. The pair $(X,c)$ is \emph{admissible} if $b_1(X)^{-c}=0$, the fixed locus $X^c$ is a nonempty surface, and $(X,c)$ satisfies one of the following conditions. \begin{enumerate}[label=(\arabic*), leftmargin=2em]
    \item \label{item:1} $X$ has an orientation-compatible symplectic form $\omega$ with $c^*\omega=-\omega$ and $b_+(X)-b_1(X)\equiv 3 \pmod{4}$. 
    \item $X$ has a spin structure $\mathfrak{s}$ with $c^*\mathfrak{s}\cong \mathfrak{s}$ and $b_+(X)^{-c}=\sigma(X)=0$. 
    \item \label{item:3} $X$ has a spin$^c$ structure $\mathfrak{s}$ with $c^*\mathfrak{s}\cong \overline{\mathfrak{s}}$, $\operatorname{SW}(X,\mathfrak{s})$ is odd, $b_+(X)-b_1(X)\equiv 3 \pmod{4}$, and \[
      d(\mathfrak{s})=\frac{b_+(X)-b_1(X)+1}{2}=b_+(X)^{-c}.
    \]
    \item $X=\CPbar$ with $b_+(X)^{-c}=0$. 
    \item $X=N\# N$, and $c$ is the involution interchanging the two factors, where $N$ is negative definite, $b_1(N)=0$, and $N$ has a spin$^c$ structure $\mathfrak{s}$ with $c_1(\mathfrak{s})^2=-b_2(N)$.
\end{enumerate}

\begin{proposition}[{\cite[Proposition 11.2]{Baraglia-2026}}]\label{prop:admissible_pair} Every admissible pair $(X,c)$ admits a Real spin$^c$ structure $\mathfrak{s}$ satisfying $ b_+(X)^{-c}=d(\mathfrak{s})\in 2\mathbb{Z}$ and $\deg_R(X,c,\mathfrak{s})\neq 0$. 
\end{proposition}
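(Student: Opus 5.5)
The statement is Proposition \ref{prop:admissible_pair}: every admissible pair carries a Real spin$^c$ structure $\mathfrak{s}$ with $d(\mathfrak{s})=b_+(X)^{-c}$ even and $\deg_R(X,c,\mathfrak{s})\neq 0$. I would argue case by case over the five conditions defining admissibility. In each case the task is to produce $\mathfrak{s}$, check $d(\mathfrak{s})=b_+(X)^{-c}\in 2\mathbb{Z}$, and show the Real degree is nonzero. Whenever $d(\mathfrak{s})>0$, I would do the last step through \eqref{eq:degree_and_SW}, by proving that $\operatorname{SW}_{R,\mathbb{Z}}$ is odd.

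\emph{Cases (1) and (3).}
\begin{itemize}
\item Case (1) is Theorem \ref{thm:symplectic_degree} with $\mathfrak{s}=\mathfrak{s}_\omega$. That proof assumes $b_1(X)=0$, so I would rerun it with $b_1(X)^{-c}=0$ and $b_+-b_1$ in place of $b_+$.
\item In both cases, $b_+(X)-b_1(X)\equiv 3\pmod 4$ makes $(b_+-b_1+1)/2$ even, and the degree is then $\pm 2$.
\item For case (3), I would use the standard relation between ordinary and Real counts when $d(\mathfrak{s})=b_+(X)^{-c}$. Choose a $c$-invariant metric and an anti-invariant generic perturbation. The involution $(A,\psi)\mapsto (I^*A,I\psi)$ acts on the ordinary moduli space. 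Non-fixed solutions come in pairs, so modulo $2$ the ordinary count is the count of fixed points.
\item A fixed point lifts to a Real solution. Conversely, each Real class contributes once; the ordinary-gauge ambiguity of the lift is handled as in the proof of Theorem \ref{thm:symplectic_degree}. So the number of fixed points equals $\#\mathcal{M}_R$.
\item The dimension condition makes the ordinary moduli space zero-dimensional, and the Real moduli space has dimension $d(\mathfrak{s})-b_+(X)^{-c}=0$.
\item Together these give $\operatorname{SW}(X,\mathfrak{s})\equiv\operatorname{SW}_R\pmod 2$. Hence $\operatorname{SW}_R$ is odd and $\deg_R\neq 0$.
\end{itemize}

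\emph{Cases (2) and (4).} Here $d(\mathfrak{s})=0=b_+(X)^{-c}$, so the target and source spheres of the finite-dimensional approximation have equal dimension. I would show the Bauer--Furuta map has odd degree by a Borsuk--Ulam-type argument.
\begin{itemize}
\item In case (2), I would take the spin structure with the quaternionic structure $j$. Composing with $c$ gives an antilinear Real lift $I$, so $\mathfrak{s}$ is a Real spin$^c$ structure, and $d(\mathfrak{s})=-\sigma/8=0$.
\item Restricted to the $\mathbb{Z}_2$-fixed set, the map $f$ is the linear map $d^+$ on $U\to U'$, which has degree $\pm1$ because $b_+^{-c}=0$.
\item A tom Dieck / Borsuk--Ulam congruence for $\mathbb{Z}_2$-maps between representation spheres whose free parts have equal dimension gives $\deg f\equiv\deg f^{\mathbb{Z}_2}\pmod 2$. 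So $\deg f$ is odd.
\item Case (4) is Theorem \ref{thm:conjugation}.
\end{itemize}

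\emph{Case (5): $X=N\#N$ with the swap.} This case is the main obstacle, because $c$ is not given as a product and the fixed locus arises only from the gluing.
\begin{itemize}
\item I would take $\mathfrak{s}=\mathfrak{s}_N\#\overline{\mathfrak{s}_N}$, which satisfies $c^*\mathfrak{s}\cong\overline{\mathfrak{s}}$.
\item Then $d(\mathfrak{s})=2\cdot\frac{-b_2(N)+b_2(N)}{8}=0$, and $b_+(X)=0$.
\item With the metric symmetric under the swap, Real configurations on $X$ correspond to ordinary configurations on one copy of $N$, stretched along the neck.
\item Under this correspondence the Real degree becomes the ordinary Bauer--Furuta degree for $(N,\mathfrak{s}_N)$ with $U(1)$-action restricted to $\pm1$. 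That degree is $\pm1$, since $d=0$ and $b_+=0$ give a unique reducible and an equivariant map of degree one on the fixed set.
\item I expect the neck-stretching identification to require the most care. As a fallback, I would apply the same fixed-point parity congruence as in cases (2) and (4), since $b_+(X)^{-c}=0$ makes the $\mathbb{Z}_2$-fixed part linear and invertible.

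\end{itemize}
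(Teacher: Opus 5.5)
The paper gives no proof of its own here; it only quotes Baraglia. So your argument has to stand alone. The gap is in case (3).

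\textbf{The gap in case (3).} You fix a $c$-invariant metric and a generic $\eta\in\Omega^+(X,i\mathbb{R})^{-c^*}$. You then treat the \emph{ordinary} moduli space as a regular $0$-manifold on which $(A,\psi)\mapsto(I^*A,I\psi)$ acts, and conclude that $\operatorname{SW}(X,\mathfrak{s})$ is the number of fixed points mod $2$. Invariant data need not give ordinary transversality.
\begin{itemize}
\item At a Real solution, the ordinary linearization splits as $L=L^{I}\oplus L^{-I}$ into $\pm1$-eigenspaces. Varying $\eta$ inside $\Omega^+(X,i\mathbb{R})^{-c^*}$ only moves the $L^{I}$ block.
\item Sard--Smale therefore makes $\mathcal{M}_R$ regular but says nothing about $L^{-I}$. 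After removing the orbit direction $i\psi$, $L^{-I}$ has index $0$ and may have nonzero kernel and cokernel.
\item Such a point can be degenerate, even non-isolated, in the ordinary moduli space, and its contribution to $\operatorname{SW}$ is not visibly odd.
\item The usual transversality argument also breaks down at non-Real solutions. Orthogonality to all anti-invariant $\delta\eta$ only forces the form part of a cokernel element to be $c^*$-invariant, not zero.
\end{itemize}
In Theorem \ref{thm:symplectic_degree} the implication runs the easy way: ordinary regularity, known from Taubes, restricts to Real regularity. You need the converse, and that is exactly the equivariant transversality problem. So $\operatorname{SW}\equiv\#\mathcal{M}_R\pmod 2$ is asserted, not proved.

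\textbf{One way to close it.} Work with a finite-dimensional approximation $f$ of the ordinary monopole map. It is equivariant for $O(2)=S^1\rtimes\langle I\rangle$, its restriction $f^I$ is the Real map, and you may use arbitrary $O(2)$-equivariant perturbations (take $b_1=0$ for simplicity).
\begin{itemize}
\item First make $f^I$ transverse.
\item At each Real zero, the $(-1)$-eigenspace blocks of the orbit slice and of the target have equal dimension. A small generic odd linear term, cut off near the orbit, makes that block invertible.
\item $I$ then either swaps $S^1$-orbits of zeros in pairs or fixes them. Each invariant orbit contains exactly one $\{\pm1\}$-pair of Real zeros, which gives $\operatorname{SW}\equiv\operatorname{SW}_R\pmod 2$.
\end{itemize}

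\textbf{Also needed in case (3): $b_1(X)>0$.} Condition (3) allows $b_1(X)>0$. Then a fixed point of the involution is Real for one of the $2^{b_1}$ lifts, not necessarily your $I$. The correct statement is $\operatorname{SW}\equiv\sum_{I'}\operatorname{SW}_R(X,c,(\mathfrak{s},I'))\pmod 2$. This still produces some Real structure with $\deg_R\neq 0$, which is all the proposition needs.

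\textbf{Smaller fixes elsewhere.}
\begin{itemize}
\item Case (4): Theorem \ref{thm:conjugation} covers only $\kappa$, but (4) allows any $c$ on $\CPbar$ with surface fixed locus. Use your parity congruence instead. A surface fixed set forces $c^*=-1$ on $H^2$ (Smith theory plus Lefschetz), so a Real structure with $c_1^2=-1$ and $d=0$ exists.
\item Case (5): drop the neck-stretching and apply the parity congruence directly to $\mathfrak{s}_N\#\overline{\mathfrak{s}_N}$.
\item Case (1): prove $b_+(X)^{-c}=(b_+-b_1+1)/2$ yourself, via Lefschetz plus $G$-signature with $[X^c]^2=-\chi(X^c)$ for the Lagrangian $X^c$. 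The paper sources that identity from the very result you are proving.
\end{itemize}
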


An admissible pair $(X,c)$ is \emph{strongly admissible} if $X$ is non-spin and the complement $X-X^c$ of the fixed locus $X^c$ is simply-connected. By Proposition \ref{prop:admissible_pair}, a strongly admissible pair $(X,c)$ satisfies the hypotheses of Theorem \ref{thm:main}. 

\begin{proof}[Proof of Corollaries \ref{cor:ordinary_SW_formulation} and \ref{cor:symplectic}]
    If $(X,c)$ satisfies the hypotheses of Corollary \ref{cor:ordinary_SW_formulation}, then the pair $(X,c)$ is a strongly admissible pair satisfying condition \ref{item:3}. If $(X,\omega,c)$ satisfies the hypotheses of Corollary \ref{cor:symplectic}, then $(X,c)$ is a strongly admissible pair satisfying condition \ref{item:1}. 
\end{proof}

Finally, we prove Theorem \ref{thm:connected_sum}. The overall strategy is similar to the proofs of Theorems \ref{thm:main} and \ref{thm:isotopy}, but there are two notable adjustments. In the first part, additional care is needed to extend the homeomorphisms to the connected sum. In the second part, since $\pi_1(X-X^c)$ need not be trivial in this case, double covers of $Q$ branched along $B$ are not uniquely determined.

\begin{proof}[Proof of Theorem \ref{thm:connected_sum}]
    Note that $b_1(X)^{-c}=\sum_{i=1}^k b_1(X_i)^{-c_i}=0$. We may assume that $(X_1,c_1)$ is strongly admissible. For each $i=1,\dots,k$, there is a Real spin$^c$ structure $\mathfrak{s}_i$ on $(X_i,c_i)$ with $b_+(X_i)^{-c_i}=d(\mathfrak{s}_i)\in 2\mathbb{Z}$ and $|\deg _R(X_i,c_i,\mathfrak{s}_i)|>0$ by Proposition \ref{prop:admissible_pair}. Letting $\mathfrak{s}:=\mathfrak{s}_1\#\cdots \#\mathfrak{s}_k$, Theorem \ref{thm:connected_sum_formula} implies \[
    |\deg_R(X,c,\mathfrak{s})|=\prod_{i=1}^k |\deg_R(X_i,c_i,\mathfrak{s}_i)|>0,
    \]
    and in particular, $m(X,c)>0$. 

    For each $n\geq 1$, set \[
    (Y_n,\hat{d}_n):=\left((X_1,c_1)\#n(\Sigma,\iota)\right)\# (X_2,c_2)\#\cdots \#(X_k,c_k).
    \]
    We assume that the balls in $X_1$ used for connected sums with the other $X_i$'s are disjoint from the balls used for the connected sums with $\Sigma$'s. We have $X_1\#n\Sigma \cong X_1$ by Proposition \ref{prop:Gluck_twist}, so there exists an orientation-preserving diffeomorphism $g_n:Y_n\to X$. Set $d_n:=g_n\hat{d}_ng_n^{-1}:X\to X$. Theorem \ref{thm:topological_equivalence} gives an orientation-preserving homeomorphism $\Phi_n:X_1\#n\Sigma \to X_1$ such that $\Phi_n (c_1\# n\iota)=c_1\Phi_n$. By Remark \ref{rmk:small_ball}, we may assume that $\Phi_n$ is the identity on the balls in $X_1$ used for connected sums with the other $X_i$'s. It follows that $\Phi_n$ extends to a homeomorphism $\Psi_n:Y_n\to X$ such that $\Psi_n \hat{d}_n=c\Psi_n$. Now we have $(\Psi_n g_n^{-1})d_n=c(\Psi_ng_n^{-1})$, proving that the involutions $d_n$ are all topologically equivalent to $c$. On the other hand, we have $m(X,d_n)=3^n \cdot m(X,c)\neq 0$ for each $n\geq 1$ by Corollary \ref{cor:equivariant_connected_sum_degree}. Letting $d_0=c$, we see that $d_n$ and $d_m$ are smoothly inequivalent for $n\neq m$. Hence $\{d_n\}_{n=0}^\infty$ is an infinite exotic family of involutions on $X$ containing $c$.

    Assume now that all the fixed surfaces $X_i^{c_i}$ are connected. Set $Q_i:=X_i/\langle c_i\rangle$ and let $B_i$ be the image of $X_i^{c_i}$ in $Q_i$. Let $(Q',B')=(Q_2,B_2)\#\cdots \# (Q_k,B_k)$ so that $(Q,B)=(Q_1,B_1)\#(Q',B')$, and let $(E,E\cap B_1)\subset (Q_1,B_1)$ be a closed ball-disk pair used for the connected sum with $(Q',B')$. Since $(X_1,c_1)$ is strongly admissible, the pair $(Q_1,B_1)$ satisfies the hypotheses of Corollary \ref{cor:isotopy_general}. Thus, by taking connected sums with $\rho P$ away from $E$, we see that $B_1\# n(\rho P)$ and $B_1$ are topologically isotopic in $Q_1$ by an isotopy rel $E$, for each $n\geq 1$. Extending the isotopy by the identity shows that $\tilde{B}_n:=(B_1\#n(\rho P))\#B_2\# \cdots \#B_k$ and $\tilde{B}_0:=B_1\#\cdots \#B_k=B$ are topologically isotopic in $Q$ for each $n\geq 1$. On the other hand, since $H^1(Q-\tilde{B}_n;\mathbb{Z}_2)$ is finite, there are only finitely many orientation-preserving equivariant diffeomorphism types of double covers of $Q$ branched along $\tilde{B}_n$. Therefore, if the pairs $(Q,\tilde{B}_n)$ represent only finitely many orientation-preserving diffeomorphism types of pairs, then the pairs $(X,d_n)$ also represent only finitely many orientation-preserving equivariant diffeomorphism types, which is a contradiction. Hence we can find a sequence $0=n_0<n_1<n_2<\cdots$ so that $(Q,\tilde{B}_{n_j})$ are pairwise smoothly inequivalent. We conclude that $\{\tilde{B}_{n_j}\}_{j=0}^\infty$ is an infinite exotic family of surfaces in $Q$ containing $B$. 
\end{proof}

\section{Cyclic branched covers of \texorpdfstring{$\mathbb{CP}^2$}{CP2}}\label{sec:branched_covers}

In this section, we study cyclic branched covers of $\mathbb{CP}^2$ and construct anti-holomorphic involutions whose fixed loci have simply-connected complements. We then prove Theorem \ref{thm:branched_cover}. 
\subsection{Set up} Here we introduce the cyclic branched covers $X_{r,m}$ of $\mathbb{CP}^2$ and prove some general results.
We begin with the following elementary observation.
\begin{lemma}\label{lem:simply_connected_branched_cover}
    Let $M$ be a connected, smooth $4$-manifold and let $B\subset M$ be a connected, smoothly embedded closed surface. Suppose that $\pi_1(M-B)$ is cyclic, generated by a meridian $\mu$ of $B$. Let $p:\widetilde{M}\to M$ be a connected, smooth, cyclic branched cover of degree $r\geq 2$ with branch locus $B$ and ramification index $r$ along $p^{-1}(B)$. Then $\widetilde{M}$ is simply-connected.
\end{lemma}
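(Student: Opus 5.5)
Write $\widetilde{B}:=p^{-1}(B)$ and $\widetilde{M}^\circ:=\widetilde{M}-\widetilde{B}$. The plan is to compute $\pi_1(\widetilde{M}^\circ)$ via covering space theory and then fill in the branch locus with van Kampen.

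First, the restriction $p:\widetilde{M}^\circ\to M-B$ is an honest connected $r$-fold cover. It is cyclic (regular with deck group $\mathbb{Z}_r$), so it corresponds to a normal subgroup $H=p_*\pi_1(\widetilde{M}^\circ)$ of index $r$ in $\pi_1(M-B)$ with quotient $\mathbb{Z}_r$. The hypothesis says $\pi_1(M-B)=\langle\mu\rangle$ is cyclic, so $H=\langle\mu^r\rangle$ and $\pi_1(\widetilde{M}^\circ)\cong H$ is cyclic, generated by any lift of the loop $\mu^r$.

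Next, use the ramification hypothesis. Since the ramification index along $\widetilde{B}$ is $r$, near a point of $\widetilde{B}$ the map $p$ looks like $(u,z)\mapsto(u,z^r)$ on $D^2\times D^2$. Hence the meridian $\tilde\mu$ of the component of $\widetilde{B}$ over a given point maps to $\mu^r$, i.e.\ $\tilde\mu$ is a closed lift of $\mu^r$. Because $B$ is connected and the index is exactly $r$, $\widetilde{B}$ is also connected; in any case we only need one component. So $\pi_1(\widetilde{M}^\circ)$ is generated by the class of $\tilde\mu$, after choosing basepoints and connecting paths so that $p_*[\tilde\mu]=[\mu]^r$. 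Since $p_*$ is injective, this identification is legitimate.

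Finally, $\widetilde{M}$ is obtained from $\widetilde{M}^\circ$, up to homotopy, by gluing back a tubular neighborhood $\nu\widetilde{B}$, a $D^2$-bundle over $\widetilde{B}$, along its circle bundle boundary. By van Kampen, $\pi_1(\widetilde{M})$ is a quotient of $\pi_1(\widetilde{M}^\circ)*\pi_1(\nu\widetilde{B})$. The fiber circle $\tilde\mu$ bounds a disk fiber, so it becomes trivial. Moreover, $\pi_1(\partial\nu\widetilde{B})\to\pi_1(\nu\widetilde{B})=\pi_1(\widetilde{B})$ is surjective, so the gluing adds no new generators. Equivalently, $\pi_1(\widetilde{M}^\circ)\to\pi_1(\widetilde{M})$ is surjective, since the complement of a codimension-$2$ submanifold is path-connected and every loop can be pushed off $\widetilde{B}$ by general position. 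Its kernel contains the class of the meridian, which generates the source. Hence $\pi_1(\widetilde{M})=1$.

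The main point of care is the basepoint bookkeeping: the lifted meridian has to represent the generator of $\pi_1(\widetilde{M}^\circ)$. This uses full ramification, since otherwise the meridian upstairs would map to a proper power $\mu^{r/e}$ and would not kill everything. Everything else is routine.
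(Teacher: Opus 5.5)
Your proof is correct and is essentially the paper's argument: $p_*\pi_1(\widetilde{M}-\widetilde{B})=\langle\mu^r\rangle$, the local model $(u,z)\mapsto(u,z^r)$ shows that the lifted meridian maps to $\mu^r$ and hence generates, and then the surjectivity of $\pi_1(\widetilde{M}-\widetilde{B})\to\pi_1(\widetilde{M})$ (by general position), together with the fact that the meridian bounds a normal disk, gives $\pi_1(\widetilde{M})=1$. As a side note, your closing remark is moot and slightly off: since $\mu$ generates $\pi_1(M-B)$ and the cover is connected, $\mu$ acts on the fibre as an $r$-cycle, so full ramification along $\widetilde{B}$ is automatic; and with ramification index $e$ the lifted meridian would map to $\mu^e$, not $\mu^{r/e}$.
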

\begin{proof}
    Note that $p_*\pi_1(\widetilde{M}-p^{-1}(B))=\langle \mu^r\rangle$. A meridian $\widetilde{\mu}$ of $p^{-1}(B)$ projects to $\mu^r$ since we have $p(z,u)=(z,u^r)$ in local coordinates. Thus, $\widetilde{\mu}$ generates $\pi_1(\widetilde{M}-p^{-1}(B))$. By transversality, the inclusion $\widetilde{M}-p^{-1}(B)\hookrightarrow \widetilde{M}$ induces a surjection on $\pi_1$. Since $\widetilde{\mu}$ is nullhomotopic in $\widetilde{M}$, we conclude that $\pi_1(\widetilde{M})=1$.
\end{proof}

Now, for integers $r\geq 2$ and $m\geq 1$, choose a section $F\in H^0(\mathbb{CP}^2,\mathcal{O}_{\mathbb{CP}^2}(rm))$ whose zero locus $B\subset \mathbb{CP}^2$ is a nonsingular curve of degree $rm$. Define \[
X_{r,m}:=\left\{(x,w)\in \mathcal{O}_{\mathbb{CP}^2}(m):w^{\otimes r}=F(x) \in \mathcal{O}_{\mathbb{CP}^2}(rm)\right\}.
\]
We suppress the dependence on $F$ in the notation; see part (a) of Proposition \ref{prop:properties_of_X_r,m} below. $X_{r,m}$ is a smooth projective surface, and the projection $\pi:X_{r,m}\to \mathbb{CP}^2$, $\pi(x,w)=x$, gives an $r$-fold cyclic cover branched along $B$ \cite[Section 7.1]{Gompf-Stipsicz-1999}. We first collect the basic properties of the surface $X_{r,m}$. Define \begin{equation}\label{eq:k_and_chi}
    k_{r,m}:=(r-1)m-3 \quad \text{and} \quad \chi_{r,m}:=\sum_{k=0}^{r-1}\frac{(km-1)(km-2)}{2}=r+\frac{r(r-1)m((2r-1)m-9)}{12}.
\end{equation}
\begin{proposition}\label{prop:properties_of_X_r,m}
    The complex surface $X_{r,m}$ has the following properties. 
    \begin{enumerate}[label=\textup{(\alph*)}, leftmargin=2em]
    \item Its oriented diffeomorphism type is independent of the choice of $F$. Moreover, the diffeomorphisms can be chosen to identify the class $H:=\pi^*c_1(\mathcal{O}_{\mathbb{CP}^2}(1))$. 
    \item It is simply-connected, and its topological Euler characteristic and the signature are given by \[
    e(X_{r,m})=3r+(r-1)rm(rm-3) \quad \text{and} \quad \sigma(X_{r,m})=r-\frac{r(r^2-1)m^2}{3}.
    \]
    In particular, $b_+(X_{r,m})=2\chi_{r,m}-1$. 
    \item It is spin if and only if $r$ is even and $m$ is odd.
    \item It is a minimal surface unless $(r,m)\in \{(2,2),(3,1)\}$, and it is of general type unless $(r,m)\in \{(2,1),(2,2),(2,3),(3,1),(4,1)\}$. 
\end{enumerate}
\end{proposition}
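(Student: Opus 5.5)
We prove the four parts separately, using standard facts about cyclic branched covers (as in \cite[Section 7.1]{Gompf-Stipsicz-1999}) together with Lemma \ref{lem:simply_connected_branched_cover}.

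\emph{Part (a).} The space of sections $F$ whose zero locus is nonsingular is the complement of the discriminant hypersurface in $\mathbb{P}H^0(\mathcal{O}(rm))$. This complement is connected, since the discriminant has complex codimension one. Over this parameter space the surfaces $X_{r,m}$ form a smooth proper family inside the total space of $\mathcal{O}(m)$. Ehresmann's theorem then shows that any two fibers are diffeomorphic, and the diffeomorphism preserves the complex orientation. The diffeomorphism can be taken to be fiber-preserving over the projection to $\mathbb{CP}^2$ up to homotopy, because the whole family maps to $\mathbb{CP}^2$. Hence $H=\pi^*c_1(\mathcal{O}(1))$ is carried to $H$.

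\emph{Part (b).} The fundamental group of the complement of a nonsingular plane curve is $\mathbb{Z}_{rm}$, generated by a meridian; this is a Zariski-type result obtainable by comparison with a union of generic lines. Lemma \ref{lem:simply_connected_branched_cover} therefore applies, so $X_{r,m}$ is simply connected. The Euler characteristic follows from the branched-cover formula
\[
e(X_{r,m})=r\,e(\mathbb{CP}^2)-(r-1)e(B),
\]
with $e(B)=-rm(rm-3)$ by adjunction.

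For the signature, we use the standard formula $\sigma=r\sigma(\mathbb{CP}^2)-\frac{r^2-1}{3r}B^2$ with $B^2=r^2m^2$. The same invariants can be cross-checked from $K=\pi^*(K_{\mathbb{CP}^2}+(r-1)mH_0)=k_{r,m}H$ together with $H^2=r$:
\begin{itemize}
\item $K^2=rk_{r,m}^2$;
\item $\chi(\mathcal{O})=\sum_{k=0}^{r-1}\chi(\mathcal{O}_{\mathbb{CP}^2}(-km))=\chi_{r,m}$, using the decomposition $\pi_*\mathcal{O}=\bigoplus_k\mathcal{O}(-km)$.
\end{itemize}
Noether's formula and $b_1=0$ then give $b_+=2\chi(\mathcal{O})-1=2\chi_{r,m}-1$.

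\emph{Part (c).} Since $X_{r,m}$ is simply connected, it is spin if and only if $w_2=K \bmod 2$ vanishes. By Lefschetz-type results, $H$ is primitive, and in fact $H^2(X)$ contains $H$ with $H^2=r$. Thus $K=k_{r,m}H$ is divisible by $2$ when $k_{r,m}$ is even.

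\begin{itemize}
\item If $k_{r,m}=(r-1)m-3$ is even, then $K$ is even and $X_{r,m}$ is spin.
\item If $k_{r,m}$ is odd, then $K\cdot H=rk_{r,m}$ and $H^2=r$ have the same parity, so $H^2\equiv K\cdot H$, which is the Wu formula for $H$. For $r$ odd this gives $H^2$ odd, so the intersection form is odd and $X_{r,m}$ is not spin.
\end{itemize}

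The remaining parity cases need more care. For example, when $r$ is even and $m$ is even, $k_{r,m}$ is odd, so we must show $w_2\neq 0$ with a class $x$ satisfying $x^2$ odd. For this I would use the more refined formula $w_2=(r-1)m+3\equiv\ldots$, or an alternative viewpoint: $X_{r,m}$ is a hypersurface of degree $rm$ in the weighted projective space $\mathbb{P}(1,1,1,m)$. Matching this bookkeeping to the exact stated condition ("$r$ even and $m$ odd") is \textbf{the main obstacle}. I would check it against known small cases, e.g.\ $X_{2,2}$ (degree-$4$ double cover, a K3-type surface that is spin) and $X_{2,1}=\mathbb{CP}^1\times\mathbb{CP}^1$. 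If these reveal a discrepancy with the statement, I would recheck which parity is intended.

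\emph{Part (d).} Since $K=k_{r,m}H$, the surface is of general type if and only if $k_{r,m}>0$. It is minimal whenever $k_{r,m}\geq 0$, because $K$ is then nef. Enumerating the pairs with $(r-1)m\leq 3$ gives the listed exceptions.
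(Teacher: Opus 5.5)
\textbf{Gap in part (c).} Parts (a) and (b) are fine. For $\sigma$ you use Hirzebruch's signature formula for cyclic branched covers, whereas the paper derives $\sigma$ from $K_{X_{r,m}}^2=rk_{r,m}^2=2e+3\sigma$; both work. The genuine gap is part (c). You leave the case $r$, $m$ both even open as ``the main obstacle''. Your proposed sanity check also rests on a misidentification: $X_{2,2}$, the double plane branched over a quartic, is the degree-$2$ del Pezzo surface $\mathbb{CP}^2\#7\CPbar$. It is non-spin, exactly as the statement predicts; the double-plane K3 is $X_{2,3}$, where $k_{2,3}=0$.

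The missing step is one line once $H$ is known to be primitive. Reduction mod $2$ has kernel $2H^2(X_{r,m};\mathbb{Z})$, so $w_2\equiv k_{r,m}H$ vanishes if and only if $k_{r,m}H$ is divisible by $2$. For primitive $H$ this happens if and only if $k_{r,m}=(r-1)m-3$ is even, i.e.\ iff $r$ is even and $m$ is odd. No Wu-formula case analysis is needed.

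\textbf{Primitivity of $H$.} This is itself not justified by ``Lefschetz-type results''. For $m\geq 2$, $X_{r,m}$ lives in the singular weighted projective space $\mathbb{P}(1,1,1,m)$, which is the Thom space of $\mathcal{O}_{\mathbb{CP}^2}(m)$. Its $H^2(-;\mathbb{Z})$ restricts to $X_{r,m}$ only as $\mathbb{Z}\cdot mH$, so such an argument cannot detect $H$. The paper proves primitivity by hand:
\begin{itemize}
\item By (a), which preserves $H$, one may take $F=z_0^{rm}+z_1z_2^{rm-1}+z_1^{rm}$.
\item On the line $L=\{z_1=0\}$ one has $F|_L=z_0^{rm}$, so $w=z_0^m$ is a section of $\pi$ over $L$.
\item This gives a holomorphic sphere $S\subset X_{r,m}$ with $\langle H,[S]\rangle=1$.
\end{itemize}
Adjunction then even gives $[S]^2=-2-k_{r,m}$, which is odd when $k_{r,m}$ is odd, so $S$ directly witnesses an odd intersection form.

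\textbf{Smaller omission in part (d).} Nefness of $K=k_{r,m}H$ only gives minimality when $k_{r,m}\geq 0$. The pair $(2,1)$ has $k_{2,1}<0$ but is claimed to be minimal. You therefore need the identification $X_{2,1}\cong\mathbb{CP}^1\times\mathbb{CP}^1$, as the paper does. For the two exceptional pairs, you need $X_{2,2}$ and $X_{3,1}$ identified as del Pezzo surfaces of degrees $2$ and $3$, i.e.\ blow-ups of $\mathbb{CP}^2$. The numerical enumeration of the pairs with $(r-1)m\leq 3$ settles only the general-type assertion, not the minimality assertion.
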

\begin{proof}
For part (a), consider two choices $F_0$ and $F_1$. Choose a smooth path $F_t$ ($t\in [0,1]$) so that the zero locus of $F_t$ is nonsingular for each $t$. Then the projection \[
\mathcal{X}:=\left\{(x,w,t)\in \mathcal{O}_{\mathbb{CP}^2}(m)\times [0,1]:w^{\otimes r}=F_t(x)\in \mathcal{O}_{\mathbb{CP}^2}(rm) \right\} \longrightarrow [0,1]
\]
is a smooth fiber bundle which must be trivial since $[0,1]$ is contractible. The resulting diffeomorphism between the fibers over $0$ and $1$ identifies the class $H$ since the class $P^*c_1(\mathcal{O}_{\mathbb{CP}^2}(1))$, where $P:\mathcal{X}\to \mathbb{CP}^2$ is the projection, restricts to the class $H$ on each fiber.

Simple connectivity follows from Lemma \ref{lem:simply_connected_branched_cover} and the well-known fact that $\pi_1(\mathbb{CP}^2-B)$ is cyclic of order $rm$, generated by a meridian $\mu$ of $B$. 

Next, we claim that the class $H\in H^2(X_{r,m};\mathbb{Z})$ is primitive. By part (a), we may use the particular nonsingular polynomial $F(z_0,z_1,z_2)=z_0^{rm}+z_1z_2^{rm-1}+z_1^{rm}$. On the line $L=\{z_1=0\}$, we have $F|_L=z_0^{rm}$, which shows that the image of the section $z_0^m$ of $\mathcal{O}_{\mathbb{CP}^2}(m)|_L$ is a holomorphically embedded sphere $S\subset X_{r,m}$ and $\pi|_S:S\to L$ is an isomorphism. Hence $\langle H,[S]\rangle=\langle c_1(\mathcal{O}_{\mathbb{CP}^2}(1)),[L]\rangle=1$; this proves the primitivity of $H$.

Let $\widetilde{B}\subset X_{r,m}$ denote the reduced ramification divisor. The tautological section of $\pi^*\mathcal{O}_{\mathbb{CP}^2}(m)$ vanishes simply along $\widetilde{B}$, so $\mathcal{O}_{X_{r,m}}(\widetilde{B})\cong \pi^*\mathcal{O}_{\mathbb{CP}^2}(m)$, and hence \[
K_{X_{r,m}}\cong \pi^*K_{\mathbb{CP}^2}\otimes \mathcal{O}_{X_{r,m}}((r-1)\widetilde{B})\cong \pi^*\mathcal{O}_{\mathbb{CP}^2}(-3+(r-1)m)=\pi^*\mathcal{O}_{\mathbb{CP}^2}(k_{r,m}).
\]
In particular, $K_{X_{r,m}}^2=rk_{r,m}^2$. On the other hand, since the genus of the curve $B$ is  $(rm-1)(rm-2)/2$, the topological Euler characteristic of $X_{r,m}$ is given by \[
e(X_{r,m})=r(e(\mathbb{CP}^2)-e(B))+e(B)=3r+(r-1)rm(rm-3).
\]
The formula $K_{X_{r,m}}^2=2e(X_{r,m})+3\sigma (X_{r,m})$ now yields the formula for $\sigma(X_{r,m})$. Since $b_1(X_{r,m})=0$, it follows that $b_+(X_{r,m})=(e(X_{r,m})-2+\sigma(X_{r,m}))/2=2\chi_{r,m}-1$, proving part (b). Part (c) follows from the fact that $w_2(X_{r,m})\equiv c_1(X_{r,m})\equiv k_{r,m}H\pmod{2}$, using the primitivity of $H$.

Finally, for part (d), note that if $k_{r,m}>0$, then $K_{X_{r,m}}$ is ample, so $X_{r,m}$ is a minimal surface of general type. If $k_{r,m}=0$, i.e., $(r,m)\in \{(2,3),(4,1)\}$, then $X_{r,m}$ is a K3 surface. If $k_{r,m}<0$, then $(r,m)$ is $(2,1)$, $(2,2)$, or $(3,1)$. The surface $X_{2,1}$ is isomorphic to $\mathbb{CP}^1\times \mathbb{CP}^1$, and $X_{2,2}$, $X_{3,1}$ are del Pezzo surfaces of degrees $2$ and $3$, respectively. This completes the proof.         
\end{proof}

\begin{remark}\label{rmk:chi_r,m}
    The integer $\chi_{r,m}$ is the holomorphic Euler characteristic of $X_{r,m}$, since $\chi(\mathcal{O}_{X_{r,m}})=(\sigma(X_{r,m})+e(X_{r,m}))/4=\chi_{r,m}$. It is easy to see that $\chi_{r,m}$ is even if and only if one of the following three conditions holds: \[
    \begin{cases}
        r\equiv0 \pmod{4}, \\
        r\equiv2 \pmod{4} \quad \text{and} \quad m\equiv 0,3\pmod{4}, \\
        r\equiv 3\pmod{4} \quad \text{and} \quad m\equiv 2,3\pmod{4}.
    \end{cases}
    \]
\end{remark}
\subsection{Fundamental group computation}The aim of this subsection is to prove Proposition \ref{prop:special_polynomial_F}.
From now on, we assume that $m$ is even; write $rm=2d$. Set $Q=z_0^2+z_1^2+z_2^2$; then $C:=\{Q=0\}$ is a nonsingular conic in $\mathbb{CP}^2$. Choose $2d$ pairwise non-proportional linear polynomials $\ell_1,\dots,\ell_{2d}\in H^0(\mathbb{CP}^2,\mathcal{O}_{\mathbb{CP}^2}(1))$ with real coefficients; for example, we may take $\ell_j=z_0+j z_1+j^2z_2$ ($1\leq j\leq 2d$). Now for $\epsilon>0$, set \begin{equation}\label{eq:special_polynomial_F}
    F=Q^d+ \epsilon^d \ell_1\cdots \ell_{2d}\in H^0(\mathbb{CP}^2,\mathcal{O}_{\mathbb{CP}^2}(2d)).
\end{equation}
This is a real homogeneous polynomial of degree $2d$. 

\begin{proposition}\label{prop:special_polynomial_F}
    For sufficiently small $\epsilon>0$, the polynomial $F$ satisfies the following conditions. \begin{enumerate}[label=\textup{(\alph*)}, leftmargin=2em]
    \item The zero locus $B$ of $F$ is nonsingular.
    \item $F(x)>0$ for all $x\in \mathbb{R}^3-\{0\}$, so that $B\cap \mathbb{RP}^2=\varnothing$.
    \item $\pi_1(\mathbb{CP}^2-(\mathbb{RP}^2\cup B))\cong \mathbb{Z}_{4d}$, generated by a meridian of $B$. 
    \end{enumerate}
\end{proposition}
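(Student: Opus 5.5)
We prove the three parts by viewing $F$ as a small perturbation of the $d$-fold conic $dC$. Write $G:=\ell_1\cdots\ell_{2d}$. After possibly replacing the $\ell_j$ by a generic choice of real linear forms, we assume:
\begin{itemize}[leftmargin=2em]
\item each line $\{\ell_j=0\}$ meets $C$ transversally in two points;
\item the $4d$ points of $C\cap\{G=0\}$ are pairwise distinct;
\item no pairwise intersection point of the lines lies on $C$.
\end{itemize}

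Part (b) is immediate. Since $Q$ is positive definite, $Q^d\geq a|x|^{2d}$ on $\mathbb{R}^3$ for some $a>0$, while $|G(x)|\leq b|x|^{2d}$. Hence $F(x)\geq (a-\epsilon^d b)|x|^{2d}>0$ for small $\epsilon$.

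For part (a), we check smoothness of $B$ region by region, using compactness of $\mathbb{CP}^2$ to make $\epsilon$ uniform.
\begin{itemize}[leftmargin=2em]
\item Away from a fixed tubular neighborhood $N$ of $C$, $|Q^d|$ is bounded below while $\epsilon^d G$ is small, so $B\cap(\mathbb{CP}^2-N)=\varnothing$.
\item Inside $N$, choose holomorphic coordinates $(u,v)$ with $u=Q$ as a fiber coordinate normal to $C$. Then $F=u^d+\epsilon^d g$, with $g$ the local expression of $G$.
\item Where $g\neq 0$, rescale $u=\epsilon u'$. The equation $u'^d=-g$ defines $d$ disjoint smooth sheets $u'=\zeta\,(-g)^{1/d}$.
\item Near a point of $C\cap\{G=0\}$, transversality lets us take $g=v\cdot(\text{unit})$. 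After the same rescaling, $u'^d+v\cdot(\text{unit})=0$ has nonvanishing $\partial/\partial v$, so it is smooth.
\end{itemize}
A uniform version of this rescaling argument, via the implicit function theorem in the $(u',v)$ coordinates on a compact set, gives nonsingularity for all sufficiently small $\epsilon$. It also shows that $B$ lies in $N$ and projects onto $C$ as a $d$-fold cyclic branched cover, fully ramified exactly over the $4d$ points $C\cap\{G=0\}$.

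For part (c), we first compute $H_1$. By Lefschetz duality, $H_1(\mathbb{CP}^2-(\mathbb{RP}^2\cup B))\cong H^3(\mathbb{CP}^2,\mathbb{RP}^2\amalg B)$. This group is the cokernel of the restriction
\[
\mathbb{Z}=H^2(\mathbb{CP}^2)\longrightarrow H^2(\mathbb{RP}^2)\oplus H^2(B)=\mathbb{Z}_2\oplus\mathbb{Z}, \qquad 1\longmapsto (1,2d).
\]
Here the first component is nonzero because the hyperplane class restricts nontrivially to $\mathbb{RP}^2$ mod $2$. The cokernel is cyclic of order $4d$, generated by $(0,1)$, which is the class of a meridian of $B$. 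So it suffices to show that $\pi_1$ is cyclic, generated by a meridian $\mu$ of $B$.

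To prove this, we use the classical fact that $\mathbb{CP}^2-\mathbb{RP}^2$ is diffeomorphic to an open disk bundle over $C$. Each nonreal point $x$ lies on the unique real line through $x$ and $\bar x$, and this line meets $C$ in a conjugate pair; this gives the fibration, and $N$ can be taken inside it. By part (a), $B$ meets each fiber disk in $d$ points, except over the $4d$ branch points, where it meets the fiber in one point. The fibration is trivial over the base with the branch points removed, and a section near the fiber boundary avoids $B$. Hence $\pi_1$ is generated by the meridians $x_1,\dots,x_d$ of $B$ in a fixed fiber, together with lifts of loops in $C$ minus the branch points.

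Two relations then finish the argument.
\begin{itemize}[leftmargin=2em]
\item Each lifted loop around a branch point becomes a product of fiber elements once the singular fiber is filled in. Since $C\cong S^2$ is simply connected, the group is generated by the $x_i$ alone.
\item Near a branch point, $B$ is a single smooth disk, given by $u'^d+v\cdot(\text{unit})=0$, and the complement of a smooth disk in a $4$-ball has $\pi_1\cong\mathbb{Z}$ generated by one meridian. The $d$ nearby fiber meridians all map to this local meridian, so $x_1=\cdots=x_d$ after transporting along a path to the fixed fiber. This needs care with the choice of paths and with the monodromy, which is a cyclic rotation of the $d$ points.
\end{itemize}
Thus $\pi_1$ is cyclic, generated by $\mu$, and the $H_1$ computation identifies it with $\mathbb{Z}_{4d}$.

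The main obstacle is the second relation: setting up the fibration carefully and verifying that the local relations at the fully ramified points really identify all the $x_i$.
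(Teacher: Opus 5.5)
Your proposal is correct in outline, but for part (c) it takes a genuinely different route from the paper. Part (b) is the same, and your rescaling $u=\epsilon u'$ in part (a) is exactly the paper's family $W_t=\{Q(x)=tv\}$. The paper handles that family globally with a relative Ehresmann lemma rather than chart by chart.

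For (c), the paper proceeds in three steps:
\begin{itemize}
\item it identifies $(T,B)$, where $T=\{\|Q\|_2\le\epsilon R\}$, with the model pair $(W_0,D)$ in the normal bundle $\mathcal{O}_{\mathbb{CP}^2}(2)|_C$;
\item it retracts $\mathbb{CP}^2-\mathbb{RP}^2-B$ onto $T-B$ using the gradient flow of $\rho=\|Q\|_2$;
\item it imports abelianness of $\pi_1(\mathcal{O}_{\mathbb{CP}^2}(2)|_C-D)$ from Nori's theorem (compactifying to $\mathbb{F}_4$, where $[D]^2=4d^2>0$), and finishes with an $H_1$ computation.
\end{itemize}
You instead work directly with the disk bundle $\mathbb{CP}^2-\mathbb{RP}^2\to C$ whose fibers are pieces of real lines. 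You then run a Zariski--van Kampen argument: the group is generated by fiber meridians, and a single local relation at one totally ramified point makes it cyclic. This avoids Nori entirely and is more elementary. The cost is that you must control $B$ relative to a non-holomorphic fibration, which the deformation sidesteps. Your argument also uses the reality of the $\ell_j$ essentially, whereas the paper's only needs $P|_C$ to have simple zeros. Your $H_1$ computation via $H^3(\mathbb{CP}^2,\mathbb{RP}^2\amalg B)$ is correct.

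Two steps need different justification than you give.

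\emph{Total ramification for this fibration.} It does not follow from part (a), where you projected along local holomorphic coordinate fibers. For a generic transverse fibration, each totally ramified point would split into $d-1$ simple branch points, and then one local relation would no longer identify all the $x_i$. What saves you is that the fiber over $p\in C\cap\{\ell_j=0\}$ lies in the real line through $p$ and $\bar p$, which is $\{\ell_j=0\}$ itself. So $F=Q^d$ on that fiber, and $B$ meets it only at $p$. For fibers over nearby $q$, with fiber coordinate $s$ normalized so $Q=s$, the restriction of $G$ is $G(q)+O(|q-p|\,|s|)$. A rescaled Rouch\'e argument then gives $d$ simple intersection points $s\approx\epsilon\,\zeta^k(-G(q))^{1/d}$.

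\emph{The local group at a branch point.} It is false in general that the complement of a smooth disk in a $4$-ball has $\pi_1\cong\mathbb{Z}$; ribbon disks for nontrivial ribbon knots are counterexamples. What you need is that the braid monodromy around $p$ is the standard $1/d$ rotation $\delta$, and the same approximation shows this. The local group is then the free group on $x_1,\dots,x_d$ modulo the relations $x=\delta(x)$, which is $\mathbb{Z}$.

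Finally, replacing the $\ell_j$ by generic forms is unnecessary, and would change the statement. Since $C$ has no real points, each real line meets $C$ transversely in a conjugate pair, and two real lines meet at a real point off $C$.
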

\begin{proof}
 Since $Q\equiv 1$ on the unit sphere $S^2\subset \mathbb{R}^3$, we have $F(x)>0$ for all $x\in S^2$ for small $\epsilon>0$. By homogeneity, it follows that $F(x)>0$ for all $x\in \mathbb{R}^3-\{0\}$, proving part (b). 

Write $P:=\ell_1\cdots \ell_{2d}$. Each line $\{\ell_j=0\}$ intersects $C$ transversely in two points, and for $j\neq k$, the lines $\{\ell_j=0\}$ and $\{\ell_k=0\}$ intersect at a real point. Consequently, there are $4d$ zeros of the section $P|_C\in H^0(C,\mathcal{O}_{\mathbb{CP}^2}(2d)|_C)$, which are all simple. Define \[
D:=\{(p,v)\in \mathcal{O}_{\mathbb{CP}^2}(2)|_C:v^{\otimes d}+P(p)=0\}.
\]
It is a smooth irreducible compact curve in $\mathcal{O}_{\mathbb{CP}^2}(2)|_C$, and the projection $D\to C$ is a cyclic cover of degree $d$, branched along the $4d$ zeros of $P|_C$. We will prove the following lemma below.

\begin{lemma}\label{lem:fundamental_group_computation}
    $\pi_1(\mathcal{O}_{\mathbb{CP}^2}(2)|_C-D)\cong \mathbb{Z}_{4d}$, generated by a meridian of $D$.  
\end{lemma}

The tautological bundle $\mathcal{O}_{\mathbb{CP}^2}(-1)\subset \mathbb{CP}^2\times \mathbb{C}^3$ has a Hermitian metric induced by the standard metric on $\mathbb{C}^3$. By taking dual and tensor powers, we obtain a Hermitian metric on $\mathcal{O}_{\mathbb{CP}^2}(n)$ for each $n\geq 1$. We denote the induced fiberwise norm on $\mathcal{O}_{\mathbb{CP}^2}(n)$ by $\|\cdot\|_n$. If $G$ is a homogeneous polynomial of degree $n$, then \[
\|G([z])\|_n^2=\frac{|G(z_0,z_1,z_2)|^2}{(|z_0|^2+|z_1|^2+|z_2|^2)^n} \quad ([z]=[z_0:z_1:z_2]\in \mathbb{CP}^2),
\] 
and $\|v^{\otimes k}\|_{kn}=\|v\|^k_n$ for any $v\in \mathcal{O}_{\mathbb{CP}^2}(n)$ and $k\geq 1$. 

Choose $R>0$ large enough so that $R^d>\max_{x\in \mathbb{CP}^2} \|P(x)\|_{2d}$, and define \begin{align*}
    \mathcal{W}&:=\left\{(x,v,t)\in \mathcal{O}_{\mathbb{CP}^2}(2)\times \mathbb{R}:\|v\|_{2}\leq R, \,Q(x)=tv\in \mathcal{O}_{\mathbb{CP}^2}(2) \right\}, \\
    \mathcal{A}&:=\left\{(x,v,t)\in \mathcal{W}: v^{\otimes d}+P(x)=0\in \mathcal{O}_{\mathbb{CP}^2}(2d) \right\}.
\end{align*}
Note that $\|v\|_2<R$ for all $(x,v,t)\in \mathcal{A}$ by our choice of $R$. For $t\in \mathbb{R}$, let $W_t$ denote the fiber of the projection $\mathcal{W}\to \mathbb{R}$ over $t$, and let $A_t:=W_t\cap \mathcal{A}$. The following lemma will be proved below.
\begin{lemma}\label{lem:diffeomorphism_of_pairs}
  For all sufficiently small $|t|$, $W_t$ is a compact smooth $4$-manifold with boundary, $A_t\subset W_t-\partial W_t\subset \mathcal{O}_{\mathbb{CP}^2}(2)$ is a nonsingular compact complex curve, and there is a diffeomorphism $(W_t,A_t)\cong (W_0,A_0)$ of pairs.   
\end{lemma}

Observe that $W_0$ is the disk subbundle of $\mathcal{O}_{\mathbb{CP}^2}(2)|_C$ of radius $R$, and $A_0=D$. Thus, $\mathcal{O}_{\mathbb{CP}^2}(2)|_C-D$ deformation retracts onto $W_0-A_0=W_0-D$. On the other hand, for sufficiently small $\epsilon$, restricting the projection $\mathcal{O}_{\mathbb{CP}^2}(2)\to \mathbb{CP}^2$ gives an identification $(W_\epsilon,A_\epsilon)\cong (T,B)$, where $T:=\{x\in \mathbb{CP}^2:\|Q(x)\|_2\leq \epsilon R\}$. In particular, $B$ is nonsingular, proving part (a). Now consider the map $\rho:\mathbb{CP}^2\to \mathbb{R}$ defined by \[
\rho([z])=\|Q([z])\|_2 =\frac{|z_0^2+z_1^2+z_2^2|}{|z_0|^2+|z_1|^2+|z_2|^2} \quad ([z]=[z_0:z_1:z_2]\in \mathbb{CP}^2). 
\]
We have $0\leq \rho\leq 1$, $\rho^{-1}(0)=C$, $\rho^{-1}(1)=\mathbb{RP}^2$, and $\rho$ has no critical points on $\rho^{-1}(0,1)$. Therefore, a standard argument using the flow of $\nabla \rho /\|\nabla \rho\|^2$ (cf. the proof of \cite[Theorem 3.1]{Milnor-1963}) shows that $\rho^{-1}[0,1)$ deformation retracts onto $T=\rho^{-1}[0,\epsilon R]$, assuming $\epsilon>0$ is sufficiently small so that $\epsilon R<1$. This restricts to a deformation retraction of $\rho^{-1}([0,1))-B=\mathbb{CP}^2-\mathbb{RP}^2-B$ onto $T-B$, since $B=\{F=0\}\subset \operatorname{int}T$. In summary, we have the following diagram: \[
\begin{tikzcd}
    \mathbb{CP}^2-\mathbb{RP}^2-B & T-B\cong W_\epsilon-A_\epsilon \ar[l, hook', "\simeq"'] \ar[r, "\cong"] & W_0-A_0=W_0-D \ar[r, hook, "\simeq"] & \mathcal{O}_{\mathbb{CP}^2}(2)|_C-D.
\end{tikzcd}
\]
The resulting isomorphism $\pi_1(\mathbb{CP}^2-\mathbb{RP}^2-B)\to \pi_1(\mathcal{O}_{\mathbb{CP}^2}(2)|_C-D)$ sends the class of a meridian of $B$ to the class of a meridian of $D$. Consequently, part (c) follows from Lemma \ref{lem:fundamental_group_computation}. 
\end{proof} 

It remains to prove Lemmas \ref{lem:fundamental_group_computation} and \ref{lem:diffeomorphism_of_pairs}. For Lemma \ref{lem:fundamental_group_computation}, we will make use of the following result, which is a special case of \cite[Corollary 2.5]{Nori-1983}. 
\begin{proposition}[{\cite{Nori-1983}}]\label{prop:Nori}
    Let $X$ be a smooth connected projective complex surface, and let $D, E\subset X$ be transversally intersecting smooth irreducible complex curves with $[D]^2>0$. Then the kernel of the homomorphism $\pi_1(X-(D\cup E))\to \pi_1(X-E)$ induced by inclusion is contained in the center of $\pi_1(X-(D\cup E))$. 
\end{proposition}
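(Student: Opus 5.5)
We do not intend to reprove Nori's theorem from scratch. The plan is to check that the present situation satisfies the hypotheses of \cite[Corollary 2.5]{Nori-1983}, and to recall the mechanism of Nori's argument in enough detail to see why only $[D]^2>0$, smoothness and irreducibility of $D$, and transversality with $E$ are needed.

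\emph{Step 1: the kernel is normally generated by a meridian.} Set $U:=X-E$, an open complex surface, and regard $D\cap U$ as a smooth, connected, properly embedded real surface in $U$. It is connected because $D$ is irreducible and $D\cap E$ is a finite set. By general position, any loop or homotopy in $U$ can be pushed off the real codimension-$2$ subset $D\cap U$. Van Kampen applied to $U=(U-D)\cup \nu(D\cap U)$ then shows that $\pi_1(X-(D\cup E))\to \pi_1(X-E)$ is surjective, with kernel normally generated by a single meridian $\gamma$ of $D$. It therefore suffices to show that $\gamma$ is central in $\pi_1(X-(D\cup E))$, since then its normal closure is the central cyclic subgroup $\langle\gamma\rangle$.

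\emph{Step 2: centrality near $D$.} Let $N$ be the boundary of a small tubular neighborhood of $D$, with the part over a neighborhood of $D\cap E$ removed. Then $N$ is a circle bundle over the punctured curve $D^\circ:=D-\nu(D\cap E)$, and the fiber of this bundle is exactly $\gamma$. The fiber of a circle bundle whose structure group is orientation preserving is central in $\pi_1$ of the total space. Hence $\gamma$ commutes with the image of $\pi_1(N)\to \pi_1(X-(D\cup E))$.

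\emph{Step 3: the image of $\pi_1(N)$ is large.} This is the main obstacle, and it is where $[D]^2>0$ enters. Nori's Lefschetz-type theorem states that, because $D$ has positive self-intersection and meets $E$ transversally, the image of $\pi_1(N)$ in $\pi_1(X-(D\cup E))$ is large enough to force $\gamma$ to be central. Concretely, Nori shows that every element of $\pi_1(X-(D\cup E))$ conjugates $\gamma$ to an element that already lies in $\langle \gamma\rangle$ times the image of $\pi_1(N)$.

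To establish this, I would follow Nori's approach. For each $n$, pass to the $n$-fold cyclic covers of neighborhoods branched along $D$, using that $[D]^2>0$ gives a positive normal bundle. One then compares $\pi_1(X-(D\cup E))$ with the fundamental group of a deformation of $D$ in its linear system; these deformations exist and sweep out $X$ because $D^2>0$. Moving $D$ to a nearby curve $D_t$ lets one homotope an arbitrary loop in $X-(D\cup E)$ into a neighborhood of $D\cup D_t$. Transversality of $D$ and $E$ is used to keep the punctures $D\cap E$ under control during this deformation.

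Granting Step 3, the meridian $\gamma$ is central, and together with Step 1 this gives the proposition. In practice the proof will consist of quoting \cite[Corollary 2.5]{Nori-1983}, with $D$ our smooth curve of positive self-intersection and $E$ the remaining transverse curve, and observing that the hypotheses match.
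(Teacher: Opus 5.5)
Your proposal does what the paper does: the paper gives no independent argument and cites the statement as a special case of \cite[Corollary 2.5]{Nori-1983}. Your Steps 1--2, where the kernel is normally generated by a meridian $\gamma$ that is central in $\pi_1$ of the circle bundle $N$, are correct.

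The Step 3 sketch should not be presented as Nori's mechanism, and you should either omit it or label it as heuristic. Knowing that every conjugate of $\gamma$ lies in the image of $\pi_1(N)$ only makes the kernel abelian, not central. Also, a smooth curve with $D^2>0$ need not move in $X$ at all; it can even have $h^0(\mathcal{O}_D(D))=0$.
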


\begin{proof}[Proof of Lemma \ref{lem:fundamental_group_computation}]
    Since $N:=\mathcal{O}_{\mathbb{CP}^2}(2)|_C\cong \mathcal{O}_{\mathbb{CP}^1}(4)$, we may compactify $N$ to the Hirzebruch surface $\mathbb{F}_4$ by adding the infinity section $S_\infty$ with $[S_\infty]^2=-4$. The zero section $C\subset N$ satisfies $[C]^2=4$, and the projection $N\to C$ is a homotopy equivalence. Since the restriction $D\to C$ has degree $d$, it follows that $[D]=d[C]\in H_2(N;\mathbb{Z})$; in particular, $[D]^2=4d^2>0$. Applying Proposition \ref{prop:Nori} shows that the kernel of the homomorphism $\pi_1(\mathbb{F}_4-D-S_\infty)\to \pi_1(\mathbb{F}_4-S_\infty)$ is central. Since $\pi_1(\mathbb{F}_4-S_\infty)=\pi_1(N)=1$, it follows that $\pi_1(\mathbb{F}_4-D-S_\infty)=\pi_1(N-D)$ is abelian. 

    Next, consider the following portion of the homology long exact sequence of the pair $(N,N-D)$: \[
    H_2(N;\mathbb{Z})\xrightarrow{j_*} H_2(N,N-D;\mathbb{Z}) \xrightarrow{\partial} H_1(N-D;\mathbb{Z}) \longrightarrow H_1(N;\mathbb{Z})=0.
    \]
    The group $H_2(N;\mathbb{Z})$ is infinite cyclic generated by the class $[C]$. If $\nu D$ denotes a closed tubular neighborhood of $D$ in $N$, then $H_2(N,N-D;\mathbb{Z})\cong H_2(\nu D,\partial \nu D;\mathbb{Z})\cong H_0(D;\mathbb{Z})$, where the second isomorphism is the Thom isomorphism. Thus, $H_2(N,N-D;\mathbb{Z})$ is infinite cyclic generated by the class of a positively oriented normal disk $\Delta$. Since $[C]\cdot [D]=d[C]^2=4d$, we have $j_*[C]=4d[\Delta]$. It follows that $H_1(N-D;\mathbb{Z})$ is cyclic of order $4d$, generated by $\partial[\Delta]=[\partial \Delta]$, which is the class of a positively oriented meridian of $D$. Since $\pi_1(N-D)$ is abelian, this completes the proof of Lemma \ref{lem:fundamental_group_computation}.     
\end{proof}

For Lemma \ref{lem:diffeomorphism_of_pairs}, we use the following relative form of Ehresmann's fibration theorem. 
\begin{lemma}
    \label{lem:Ehresmann}
    Let $W$ be a smooth manifold with boundary, and let $A\subset W-\partial W$ be a smoothly embedded submanifold without boundary, closed as a subset of $W$. For an open interval $I\subset \mathbb{R}$, suppose that $f:W\to I$ is a proper surjective smooth map and that $f$, $f|_{\partial W}$, and $f|_A$ are submersions. Then the pairs $(f^{-1}(t), f^{-1}(t)\cap A)$ are diffeomorphic for all $t\in I$.
\end{lemma}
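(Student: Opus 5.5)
The plan is to build a vector field on $W$ that lifts $\partial/\partial t$ and is tangent to both $\partial W$ and $A$, and then use its flow. Concretely, I will find a smooth vector field $V$ on $W$ with $df(V)=\partial_t$ everywhere, tangent to $\partial W$ along $\partial W$, and tangent to $A$ along $A$. Given such a $V$, properness of $f$ makes the flow complete in the following sense. An integral curve starting in $f^{-1}(t_0)$ satisfies $f(\gamma(s))=t_0+s$. Over any compact subinterval $J\subset I$ it stays in the compact set $f^{-1}(J)$, so it exists as long as $t_0+s\in I$. Tangency to $\partial W$ means flow lines never leave $W$ through the boundary, and boundary points stay on the boundary. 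Tangency to $A$, together with $A$ being closed and an embedded submanifold, means $A$ is invariant under the flow. The time-$(t_1-t_0)$ flow is therefore a diffeomorphism $f^{-1}(t_0)\to f^{-1}(t_1)$ carrying $f^{-1}(t_0)\cap A$ onto $f^{-1}(t_1)\cap A$, and its inverse is the reverse flow.

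To construct $V$, I work locally and patch with a partition of unity. The condition $df(V)=\partial_t$ is affine in $V$, and so are tangency to $\partial W$ (in the appropriate inward/tangent sense) and tangency to $A$. Hence any convex combination of local solutions is again a solution, and it suffices to find local lifts in a neighbourhood of each point.

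There are three kinds of points to handle.
\begin{itemize}[leftmargin=2em]
\item At an interior point not on $A$, $f$ is a submersion, so in local coordinates $f$ is a coordinate projection and $V=\partial/\partial x_1$ works.
\item At a point of $A$ (which lies in $W-\partial W$), $f|_A$ is a submersion. Choose slice coordinates $(x_1,\dots,x_n)$ with $A=\{x_{k+1}=\dots=x_n=0\}$ locally. Since $f|_A$ is a submersion we may take $x_1=f$ on $A$. Because $f$ itself is a submersion, we can arrange $f=x_1$ on a whole neighbourhood: $d(f|_A)\neq 0$, so $(f,x_2,\dots,x_n)$ are still coordinates near the point, and in these coordinates $A$ is still a slice. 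Then $V=\partial/\partial x_1$ is tangent to $A$.
\item At a boundary point, $f|_{\partial W}$ is a submersion, so we get half-space coordinates with $f=x_1$ and $\partial W=\{x_n=0\}$, and $V=\partial/\partial x_1$ is tangent to $\partial W$.
\end{itemize}
Since $A$ is closed and disjoint from $\partial W$, I can choose the cover so that charts meeting $A$ avoid $\partial W$ and boundary charts avoid $A$. With this choice, the partition-of-unity sum preserves each tangency condition, because each condition only involves the charts that touch the relevant set.

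The only delicate step is the slice chart at points of $A$: one must check that coordinates can be adjusted so that $f$ is a coordinate while $A$ remains a coordinate slice. This is exactly where the submersion hypothesis on $f|_A$ is used. Everything else, including completeness of the flow from properness, is routine, as in the usual proof of Ehresmann's theorem. Finally, connectedness of $I$ lets any two fibres be joined by such a flow.
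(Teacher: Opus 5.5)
Your proposal is correct and follows essentially the same route as the paper. You take local lifts of $\partial/\partial t$ tangent to $A$ and to $\partial W$, which you justify more carefully than the paper via slice and half-space charts in which $f$ is a coordinate. You then patch them with a partition of unity, using that $A$ and $\partial W$ are disjoint closed sets, and flow, with the flow defined up to time $t_1-t_0$ because $f^{-1}[t_0,t_1]$ is compact.

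The one point worth stating explicitly is that the charts around points of $W-(A\cup\partial W)$ should also be chosen disjoint from $A$, which is possible since $A$ is closed. This ensures that every chart whose bump function is nonzero somewhere on $A$ carries a lift tangent to $A$.
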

\begin{proof}
    By the submersion hypotheses, the vector field $\frac{\partial}{\partial t}$ on $I$ has local lifts to $W$ that are tangent to $A$ on $A$ and tangent to $\partial W$ on $\partial W$. Since $A$ and $\partial W$ are disjoint closed subsets of $W$, these local lifts can be patched by a partition of unity to a smooth vector field $V$ on $W$ satisfying $df(V)=1$, which is tangent to $A$ on $A$ and tangent to $\partial W$ on $\partial W$. Its local flow $\Phi_s$  preserves $A$ and $\partial W$. Moreover, $f(\Phi_s(q))=f(q)+s$ whenever $\Phi_s(q)$ is defined. Now, for any $t_0,t_1\in I$ with $t_0<t_1$, properness of $f$ implies that $f^{-1}[t_0,t_1]$ is compact, so $\Phi_{t_1-t_0}$ is a well-defined diffeomorphism \[
    (f^{-1}(t_0), f^{-1}(t_0)\cap A)\longrightarrow (f^{-1}(t_1), f^{-1}(t_1)\cap A)
    \]
    of pairs, with inverse $\Phi_{t_0-t_1}$.
\end{proof}

\begin{proof}[Proof of Lemma \ref{lem:diffeomorphism_of_pairs}]
    Consider the space \[
    \widetilde{\mathcal{W}}:=\left\{(x,v,t)\in \mathcal{O}_{\mathbb{CP}^2}(2)\times \mathbb{R}: Q(x)=tv\in \mathcal{O}_{\mathbb{CP}^2}(2) \right\},
    \]
    so that $\mathcal{W}=\widetilde{\mathcal{W}}\cap \{\|v\|_2\leq R\}$. Let $f:\widetilde{\mathcal{W}}\to \mathbb{R}$ denote the projection. 

    \vspace{1mm}
    \noindent \textbf{Step 1:} Near any point in $\widetilde{\mathcal{W}}\cap \{t=0\}$, $\widetilde{\mathcal{W}}$ is a smooth $5$-manifold and the projection $f:\widetilde{\mathcal{W}}\to \mathbb{R}$ is a submersion.

    \begin{proof}
        Fix a point $(x,v,0)\in \widetilde{\mathcal{W}}$ so that $x\in C$. Near $x$, choose a holomorphic frame $e$ of $\mathcal{O}_{\mathbb{CP}^2}(2)$. Since $Q\in H^0(\mathbb{CP}^2,\mathcal{O}_{\mathbb{CP}^2}(2))$ is transverse to the zero section along $C$, we can choose holomorphic coordinates $(z,y)$ of $\mathbb{CP}^2$ near $x$ so that $Q=ye$ and $C=\{y=0\}$. Writing $v=ue$, the equation $Q(x)=tv$ becomes $y=tu$. Thus, near $(x,v,0)$, $\widetilde{\mathcal{W}}$ is parametrized by $(z,u,t)\in \mathbb{C}\times \mathbb{C}\times \mathbb{R}\mapsto ((z,tu), ue_{(z,tu)},t)$. In these coordinates, we have $f(z,u,t)=t$. This proves \textbf{Step 1}.
    \end{proof}

    \noindent \textbf{Step 2:} For any point in $\widetilde{\mathcal{W}}\cap \{t=0, \, \|v\|_2\leq R\}$, $\mathcal{W}$ is a smooth $5$-manifold with boundary $\mathcal{W}\cap \{\|v\|_2=R\}$ near that point.

    \begin{proof}
    Using the notation in \textbf{Step 1}, set $h(z,y):=\|e_{(z,y)}\|_2^2$ and $b(z,u,t):=h(z,tu)|u|^2-R^2$, so that $\mathcal{W}=\widetilde{\mathcal{W}}\cap \{b\leq 0\}$ near $(x,v,0)$. At a point with $b(z,u,0)=0$, writing $u=u_1+iu_2$, the differential of $b$ satisfies \[
    (db)_{(z,u,0)}\left(u_1\frac{\partial}{\partial u_1}+u_2\frac{\partial}{\partial u_2} \right)=2h(z,0)|u|^2=2\|ue_{(z,0)}\|_2^2=2R^2>0.
    \]
    This shows that $\mathcal{W}=\{b\leq 0\}$ is a smooth $5$-manifold with boundary $\{b=0\}$ near $(z,u,0)$. 
    \end{proof}

    \noindent \textbf{Step 3:} For any point in $\widetilde{\mathcal{W}}\cap \{t=0, \, \|v\|_2=R\}$,  the restriction $f:\partial \mathcal{W}\to \mathbb{R}$ is a submersion near that point.
    \begin{proof}
        Using the notation in \textbf{Step 2}, consider a point with $b(z,u,0)=0$. Set \[ 
        V_u:=u_1\frac{\partial}{\partial u_1}+u_2\frac{\partial}{\partial u_2} \quad \text{and} \quad V_t:=\frac{\partial}{\partial t}-\frac{\partial b}{\partial t} \frac{1}{(db)(V_u)} V_u.
    \]
    Since $(db)_{(z,u,0)}(V_u)>0$ by the proof of \textbf{Step 2}, $V_t$ is well-defined near $(z,u,0)$. Also, $V_t$ is tangent to $\partial \mathcal{W}$ since $(db)(V_t)=0$. The proof of \textbf{Step 1} shows that the projection $f:\widetilde{\mathcal{W}}\to \mathbb{R}$ is locally given by $(z,u,t)\mapsto t$, so $(df)(V_u)=0$ and $(df)(V_t)=1$. Thus, the differential of $f|_{\partial \mathcal{W}}$ is nonzero near $(z,u,0)$. 
    \end{proof}

    \noindent \textbf{Step 4:} For any point in $A_0=\mathcal{A}\cap \{t=0\}$, $\mathcal{A}$ is a smooth submanifold of $\mathcal{W}-\partial \mathcal{W}$ and the restriction $f:\mathcal{A}\to \mathbb{R}$ is a submersion near that point. 
    \begin{proof}
        We have already seen that $\|v\|_2<R$ for all $(x,v,t)\in \mathcal{A}$, so $\mathcal{A}$ is contained in $\mathcal{W}-\partial \mathcal{W}$. Using the notation of the previous steps, write $P(z,y)=p(z,y)e_{(z,y)}^{\otimes d}$, so that $\mathcal{A}=\mathcal{W}\cap \{G=0\}$ where $G(z,u,t):=u^d+p(z,tu)$. For a fixed $t$, $G$ is holomorphic in $z$ and $u$. Also, if $p(z,0)=0$ then $\frac{\partial}{\partial z}p(z,0)\neq 0$ since $P|_C$ has only simple zeros. Thus, if $G(z,u,0)=0$, then at least one of $\frac{\partial G}{\partial z}$ and $\frac{\partial G}{\partial u}$ must be nonzero at $(z,u,0)$. This proves that $\mathcal{A}$ is a smooth submanifold of $\mathcal{W}-\partial \mathcal{W}$ near $(z,u,0)$. 

        The above paragraph shows that the differential $\mathbb{C}^2\to \mathbb{C}$ of the map $(z,u)\mapsto G(z,u,0)$ is surjective at any point with $G(z,u,0)=0$. Thus, for any $\lambda\in \mathbb{R}$, there exist $\xi,\eta\in \mathbb{C}$ such that \[
        \xi \frac{\partial G}{\partial z}+\eta\frac{\partial G}{\partial u}=-\lambda \frac{\partial G}{\partial t} \quad \text{at $(z,u,0)$}.
        \]
        This shows that $(\xi,\eta,\lambda)\in \mathbb{C}^2\times \mathbb{R}$, viewed as a tangent vector to $\mathcal{W}$ at $(z,u,0)$, is tangent to $\mathcal{A}$. Since $(df)_{(z,u,0)}(\xi,\eta,\lambda)=\lambda$, it follows that $f:\mathcal{A}\to \mathbb{R}$ is a submersion near $(z,u,0)$.  
    \end{proof}

     Finally, we show that the preceding assertions hold for sufficiently small $|t|$:

    \vspace{1mm}
    \noindent \textbf{Step 5:} There exists $\tau>0$ such that the following statements hold for $|t|<\tau$. \begin{itemize}[leftmargin=2em]
        \item $\mathcal{W}$ is a smooth $5$-manifold with boundary $\mathcal{W}\cap \{\|v\|_2= R\}$. 
        \item $\mathcal{A}$ is a smooth submanifold of $\mathcal{W}-\partial \mathcal{W}$.
        \item The projections $f:\mathcal{W}\to (-\tau,\tau)$, $f:\partial \mathcal{W}\to (-\tau,\tau)$, and $f:\mathcal{A}\to (-\tau,\tau)$ are submersions. 
        \item $A_t=f^{-1}(t)\cap \mathcal{A}$ is a nonsingular compact complex curve.  
    \end{itemize}
    \begin{proof}
        For the first condition, suppose that there is a sequence $(x_j,v_j,t_j)$ in $\mathcal{W}$ with $t_j\to 0$ such that $\mathcal{W}$ is not a smooth manifold with boundary near each $(x_j,v_j,t_j)$. The sequence $(x_j,v_j)$ is contained in the compact disk bundle in $\mathcal{O}_{\mathbb{CP}^2}(2)$ of radius $R$, so we may assume that $(x_j,v_j)\to (x_\infty,v_\infty)$ by passing to a subsequence. Then $(x_j,v_j,t_j)\to (x_\infty,v_\infty,0)$ and $(x_\infty,v_\infty,0)\in \mathcal{W}$ since $\mathcal{W}$ is closed in $\mathcal{O}_{\mathbb{CP}^2}(2)\times \mathbb{R}$. By \textbf{Step 2}, $\mathcal{W}$ is locally a smooth $5$-manifold with boundary near $(x_\infty,v_\infty,0)$, which gives a contradiction since $(x_j,v_j,t_j)$ is contained in a neighborhood of $(x_\infty,v_\infty,0)$ for all large $j$. The second and third conditions can be verified in exactly the same way; note that being a submersion is an open condition. 

        The last condition is also similar. If we suppose, on the contrary, that there is a sequence $(x_j,v_j,t_j)$ in $\mathcal{A}$ such that $t_j\to 0$ and $(x_j,v_j,t_j)$ is a singular point of $A_{t_j}$, then by passing to a subsequence, we may assume that $(x_j,v_j,t_j)\to (x_\infty,v_\infty,0)\in A_0$. For a local chart near $(x_\infty,v_\infty,0)$ as in the proof of \textbf{Step 4}, we have seen that at least one of $\frac{\partial G}{\partial z}$ and $\frac{\partial G}{\partial u}$ is nonzero at $(x_\infty,v_\infty,0)$; thus it is also nonzero at $(x_j,v_j,t_j)$ for large $j$, but this contradicts that $A_{t_j}$ is singular at $(x_j,v_j,t_j)$. For each $t$, $z$ and $u$ are holomorphic coordinates on $W_t-\partial W_t$, and $(z,u)\mapsto G(z,u,t)$ is holomorphic, so we conclude that $A_t$ is a nonsingular complex curve in $W_t-\partial W_t$ for all small $|t|$. It is obvious that each $A_t$ is compact.
    \end{proof}

    It is clear that the projection $f:\mathcal{W}\to \mathbb{R}$ is surjective and proper. Thus, by \textbf{Step 5}, Lemma \ref{lem:Ehresmann} applies to $W=\mathcal{W}\cap \{|t|<\tau\}$, $A=\mathcal{A}\cap \{|t|<\tau\}$, and $I=(-\tau,\tau)$. This completes the proof of Lemma \ref{lem:diffeomorphism_of_pairs}. 
\end{proof}

\subsection{The involution} 
We now use the polynomial $F$ in \eqref{eq:special_polynomial_F} to construct an anti-holomorphic involution on $X_{r,m}$ whose fixed locus has a simply-connected complement. 

\begin{proposition}\label{prop:conjugation}
    Let $r,m\geq 2$ be integers with $m$ even. Write $rm=2d$ and choose a polynomial $F\in H^0(\mathbb{CP}^2,\mathcal{O}_{\mathbb{CP}^2}(2d))$ as in \eqref{eq:special_polynomial_F} for sufficiently small $\epsilon>0$. Then there exists a smooth involution $c:X_{r,m}\to X_{r,m}$ satisfying the following conditions. \begin{enumerate}[label=\textup{(\alph*)}, leftmargin=2em]
        \item The involution $c$ is anti-holomorphic and orientation-preserving. Its fixed locus is given by \[
        X_{r,m}^c\cong \begin{cases}
            \mathbb{RP}^2 & \text{if $r$ is odd}, \\
            \mathbb{RP}^2\amalg \mathbb{RP}^2 & \text{if $r$ is even}.
        \end{cases}
        \]
        \item There is a K\"ahler form $\omega$ on $X_{r,m}$ such that $[\omega]=H$, $c^*\omega=-\omega$, and $b_+(X_{r,m})^{-c}=\chi_{r,m}$. 
        \item The complement $X_{r,m}-X_{r,m}^c$ is simply-connected.
    \end{enumerate}
\end{proposition}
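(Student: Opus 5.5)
We will define $c$ as the restriction of fiberwise complex conjugation. The real structure on $\mathbb{CP}^2$, $[z]\mapsto[\bar z]$, lifts to an anti-linear involution of $\mathcal{O}_{\mathbb{CP}^2}(m)$ covering it. Since $F$ has real coefficients, the map $(x,w)\mapsto(\bar x,\bar w)$ preserves the equation $w^{\otimes r}=F(x)$. So it restricts to an anti-holomorphic involution $c$ of $X_{r,m}$ commuting with $\pi$. On a complex surface an anti-holomorphic map reverses orientation on each complex line but acts on $\Lambda^{2,2}$ as a conjugation of a real $4$-form, so $c$ preserves orientation; equivalently, complex conjugation on $\mathbb{C}^2$ has determinant $(+1)$.

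For the fixed locus, a fixed point lies over a point $x\in\mathbb{RP}^2$ with $w$ real, i.e.\ $w\in\mathcal{O}(m)_{\mathbb{R}}$ and $w^r=F(x)$. Because $m$ is even, $\mathcal{O}(m)_{\mathbb{R}}|_{\mathbb{RP}^2}$ is trivial and we may identify real $w$ with real numbers. By Proposition \ref{prop:special_polynomial_F}(b), $F>0$ on $\mathbb{RP}^2$, so there is a well-defined positive root, and $F$ is an honest positive function for this even-degree trivialization. For $r$ odd there is exactly one real root $w=F^{1/r}$, giving $X^c\cong\mathbb{RP}^2$. For $r$ even there are two, $\pm F^{1/r}$, giving two disjoint copies of $\mathbb{RP}^2$. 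This proves (a).

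For (b), we will pull back the Fubini--Study form $\omega_{FS}$ via $\pi$. This form is only semipositive along $\widetilde B$, so we add $\delta\, i\partial\bar\partial\log(\ldots)$, or more simply use the restriction to $X_{r,m}$ of $\pi^*\omega_{FS}+\delta\, i\partial\bar\partial\|w\|^2_m$ on the total space of $\mathcal{O}(m)$. This restriction is Kähler for small $\delta>0$, since $X_{r,m}$ is a projective submanifold of the compactified total space. We then average by $\omega\mapsto\frac12(\omega-c^*\omega)$. Because $c$ is anti-holomorphic, $-c^*\omega$ is again Kähler, and both forms represent $H$: indeed $c^*H=-H$, since conjugation on $\mathbb{CP}^2$ reverses $[\omega_{FS}]$. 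The averaged form is therefore a $c$-anti-invariant Kähler form with $[\omega]=H$. The value of $b_+^{-c}$ then follows from Theorem \ref{thm:symplectic_degree}, whose first statement gives $b_+(X)^{-c}=(b_+(X)+1)/2=\chi_{r,m}$ by Proposition \ref{prop:properties_of_X_r,m}(b).

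The main step is (c). We will use the covering $\pi\colon X_{r,m}-\pi^{-1}(B\cup\mathbb{RP}^2)\to\mathbb{CP}^2-(B\cup\mathbb{RP}^2)$, which is an unbranched $\mathbb{Z}_r$-cover. By Proposition \ref{prop:special_polynomial_F}(c), the base has fundamental group $\mathbb{Z}_{4d}$, generated by a meridian $\mu$ of $B$, and the cover corresponds to the subgroup $\langle\mu^r\rangle$. As in Lemma \ref{lem:simply_connected_branched_cover}, a meridian of $\widetilde B$ maps to $\mu^r$. Hence $\pi_1$ of the cover is cyclic, generated by the meridian of $\widetilde B$. Filling in $\widetilde B$, which is disjoint from $\pi^{-1}(\mathbb{RP}^2)$ by Proposition \ref{prop:special_polynomial_F}(b), kills this generator and is surjective on $\pi_1$ by transversality. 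We must check that $\pi^{-1}(\mathbb{RP}^2)$ is exactly $X^c$ together with the non-real roots, so that the complement computed is $X_{r,m}-\pi^{-1}(\mathbb{RP}^2)$. Then $X_{r,m}-\pi^{-1}(\mathbb{RP}^2)$ is simply connected. Finally, $\pi^{-1}(\mathbb{RP}^2)$ is a union of surfaces, namely the $r$ lifts $\zeta^jF^{1/r}$, each of codimension $2$. Removing a surface is surjective on $\pi_1$, so reinserting the non-fixed lifts preserves simple connectivity, and $X_{r,m}-X^c_{r,m}$ is simply connected. The subtle point to watch is that the covering degree and the identification of meridians are correct; this is where the careful choice of $F$ in Proposition \ref{prop:special_polynomial_F} is used.
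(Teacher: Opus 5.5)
Your proposal is correct and follows essentially the paper's route: lift conjugation to $\mathcal{O}_{\mathbb{CP}^2}(m)$, identify $\pi^{-1}(\mathbb{RP}^2)$ with the $r$ disjoint sections $\zeta^jF^{1/r}$ (the real ones forming $X_{r,m}^c$), and get (c) by showing $X_{r,m}-\pi^{-1}(\mathbb{RP}^2)$ is simply connected via Proposition~\ref{prop:special_polynomial_F}(c) and the meridian argument of Lemma~\ref{lem:simply_connected_branched_cover}, then reinserting the non-fixed sections. The only deviations are in (b). First, you build an explicit K\"ahler representative of $H$ rather than invoking ampleness: positivity of $\pi^*\omega_{FS}+\delta\, i\partial\bar\partial\|w\|_m^2$ for small $\delta$ comes from a direct curvature computation and boundedness of $\|w\|_m$ on $X_{r,m}$, not from projectivity, and the correction term is exact, so the class stays $H$. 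Second, you read off $b_+(X_{r,m})^{-c}=(b_+(X_{r,m})+1)/2=\chi_{r,m}$ from the first statement of Theorem~\ref{thm:symplectic_degree} instead of from the paper's Hodge decomposition $\mathbb{R}\omega\oplus(H^{2,0}\oplus H^{0,2})_{\mathbb{R}}$, which is equally valid.
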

\begin{proof} The complex conjugation on $\mathbb{CP}^2$ has a standard anti-linear lift to $\mathcal{O}_{\mathbb{CP}^2}(m)$. Since $F$ has real coefficients, the lift restricts to an anti-holomorphic smooth involution $c:X_{r,m}\to X_{r,m}$; it must be orientation-preserving since $\dim_{\mathbb{C}}X_{r,m}=2$. 

Note that $Q^{m/2}$ defines a nowhere-vanishing section of $\mathcal{O}_{\mathbb{CP}^2}(m)$ over $\mathbb{RP}^2$. The function $f:=F/Q^{d}:\mathbb{RP}^2\to \mathbb{R}_{>0}$ is well-defined and satisfies $F(x)=f(x)(Q^{m/2}(x))^{\otimes r}\in \mathcal{O}_{\mathbb{CP}^2}(2d)$ for $x\in \mathbb{RP}^2$. Letting $\zeta=e^{2\pi i /r}$, the preimage $\pi^{-1}(\mathbb{RP}^2)$ is the disjoint union of the images $L_j$ ($j\in \mathbb{Z}_r$) of the sections $\mathbb{RP}^2\to \mathcal{O}_{\mathbb{CP}^2}(m)$, $x\mapsto \zeta^j f(x)^{1/r}Q^{m/2}(x)$. For each $j$, the restriction $\pi:L_j\to \mathbb{RP}^2$ is a diffeomorphism and $c(L_j)=L_{-j}$. It follows that the fixed locus $X_{r,m}^c$ is precisely $L_0$ if $r$ is odd, and $L_0\amalg L_{r/2}$ if $r$ is even. This proves part (a). 

Since the class $H\in H^2(X_{r,m};\mathbb{Z})$ is ample, it has a K\"ahler representative $\eta$. Since $c$ is anti-holomorphic, $-c^*\eta$ is again a K\"ahler form. Therefore, $\omega:=(\eta-c^*\eta)/2$ is a K\"ahler form on $X_{r,m}$ satisfying $c^*\omega=-\omega$ and $[\omega]=H$; note that $c^*H=-H$. The associated K\"ahler metric $g$ is $c$-invariant, and we have the following $c$-invariant decomposition of the $g$-self-dual real harmonic $2$-forms: \[
\mathcal{H}^+_g(X_{r,m};\mathbb{R}) =\mathbb{R}\omega \oplus \left( H^{2,0}(X_{r,m})\oplus H^{0,2}(X_{r,m})\right)_{\mathbb{R}}.
\]
For the anti-linear involution on $H^{2,0}(X_{r,m})$ given by $\alpha\mapsto \overline{c^*\alpha}$, its real fixed subspace has dimension $\dim_{\mathbb{C}} H^{2,0}(X_{r,m})=p_g(X_{r,m})$. If $\{\alpha_1,\dots,\alpha_{p_g}\}$ is a real basis for this real fixed subspace, then $\{\operatorname{Re}\alpha_1,\dots,\operatorname{Re}\alpha_{p_g},\operatorname{Im}\alpha_1,\dots,\operatorname{Im}\alpha_{p_g}\}$ is a real basis for $(H^{2,0}(X_{r,m})\oplus H^{0,2}(X_{r,m}))_{\mathbb{R}}$, and we have $c^*\operatorname{Re}\alpha_j=\operatorname{Re}\alpha_j$, $c^*\operatorname{Im}\alpha_j=-\operatorname{Im}\alpha_j$. We conclude that $b_+(X_{r,m})^{-c}=1+p_g(X_{r,m})=(b_+(X_{r,m})+1)/2=\chi_{r,m}$, proving part (b). 

Finally, observe that the restriction $\pi:\pi^{-1}(\mathbb{CP}^2-\mathbb{RP}^2)\to \mathbb{CP}^2-\mathbb{RP}^2$ is a cyclic cover of degree $r$ branched along the zero locus $B$ of $F$. By Proposition \ref{prop:special_polynomial_F}(c), $\pi_1(\mathbb{CP}^2-\mathbb{RP}^2-B)$ is cyclic of order $2rm$, generated by a meridian $\mu$ of $B$. Thus, $\pi^{-1}(\mathbb{CP}^2-\mathbb{RP}^2)$ is simply-connected by Lemma \ref{lem:simply_connected_branched_cover}. The proof of part (a) shows that $\pi^{-1}(\mathbb{CP}^2-\mathbb{RP}^2)$ is obtained from $X_{r,m}-X_{r,m}^c$ by removing the $L_j$'s which are not fixed by $c$. Hence the inclusion $\pi^{-1}(\mathbb{CP}^2-\mathbb{RP}^2)\hookrightarrow X_{r,m}-X_{r,m}^c$ induces a surjection on $\pi_1$ by transversality. This completes the proof.     
\end{proof}

We now combine the preceding propositions to prove Theorem \ref{thm:branched_cover}. 

\begin{proof}[Proof of Theorem \ref{thm:branched_cover}]
    Suppose both $m$ and $\chi_{r,m}$ are even. By Proposition \ref{prop:conjugation}, there is an anti-holomorphic involution $c$ on $X_{r,m}$ such that the fixed locus is \[
        X_{r,m}^c\cong \begin{cases}
            \mathbb{RP}^2 & \text{if $r$ is odd}, \\
            \mathbb{RP}^2\amalg \mathbb{RP}^2 & \text{if $r$ is even}.
        \end{cases}
        \] and $X_{r,m}-X_{r,m}^c$ is simply-connected. Moreover, there is a K\"ahler form $\omega$ on $X_{r,m}$ with $c^*\omega=-\omega$ and $[\omega]=H$. Now, Proposition \ref{prop:properties_of_X_r,m} shows that $b_+(X_{r,m})=2\chi_{r,m}-1\equiv 3\pmod{4}$ and that $X_{r,m}$ is a non-spin complex surface of general type. Consequently, $(X_{r,m},\omega,c)$ satisfies the hypotheses of Corollary \ref{cor:symplectic}, so Theorem \ref{thm:main} implies $X_{r,m}$ admits an infinite exotic family $\{c_n\}_{n=0}^\infty$ of involutions with $c_0=c$, such that every $c_n$ with $n\geq 1$ is not anti-symplectic with respect to any orientation-compatible symplectic form. The assertion about exotic surfaces in $Q_{r,m}$ follows from Theorem \ref{thm:isotopy}.

    Simple connectivity of $X_{r,m}$ is proved in Proposition \ref{prop:properties_of_X_r,m}(b). To prove irreduciblity of $X_{r,m}$, it suffices to prove symplectic minimality of $(X_{r,m},\omega)$; see \cite[Corollary 5.5]{Kotschick-1997}. (Note also that smooth minimality of $X_{r,m}$ is equivalent to symplectic minimality of $(X_{r,m},\omega)$ by \cite[Proposition 2.1]{Li-2006}.) The proofs of Propositions \ref{prop:properties_of_X_r,m} and \ref{prop:conjugation} (see also Remark \ref{rmk:chi_r,m}) show that \[
    K_\omega:=-c_1(X_{r,m},\omega)=c_1(K_{X_{r,m}})=k_{r,m}H=k_{r,m}[\omega]
    \] with $k_{r,m}>0$. If $E\subset (X_{r,m},\omega)$ is a symplectic $(-1)$-sphere, then the adjunction formula gives $K_\omega\cdot E=-1$, whereas $K_\omega\cdot E=k_{r,m}\int_E \omega>0$, which is a contradiction. This proves the symplectic minimality of $(X_{r,m},\omega)$. In fact, symplectic minimality of $(X_{r,m},\omega)$ also follows from \cite[Theorem 1.2]{Hamilton-Kotschick-2006}.
\end{proof}

\begin{remark}\label{rmk:cohomology_antisymplectic}
Although $c_n$ for each $n\geq 1$ is not anti-symplectic with respect to any orientation-compatible symplectic form on $X_{r,m}$, it satisfies $c_n^*[\omega]=-[\omega]$ on cohomology for the K\"ahler form $\omega$ used in the proof of Theorem \ref{thm:branched_cover}. To prove this, we use the fact that if $X$ is a minimal K\"ahler surface of general type, $\tilde{X}\to X$ is a blow-up at a point, and $f:\tilde{X}\to \tilde{X}$ is an orientation-preserving diffeomorphism, then $f^*\tilde{K}=\pm \tilde{K}$, where $\tilde{K}$ is the pullback of the canonical class $K\in H^2(X;\mathbb{Z})$ of $X$; this is a special case of \cite[Corollary 1.3]{Friedman-Morgan-1997}. 

Now recall from the proof of Theorem \ref{thm:main} that $c_n:X_{r,m}\to X_{r,m}$ is defined to be the composition $g_n\circ (c\# n\iota)\circ g_n^{-1}$ where $g_n$ is an orientation-preserving diffeomorphism $X_{r,m}\# n\Sigma\to X_{r,m}$. Let $K:=c_1(K_{X_{r,m}})=k_{r,m}[\omega]\in H^2(X_{r,m};\mathbb{Z})$, and let $h_n:n\Sigma\# \CPbar\to \CPbar$ be an orientation-preserving diffeomorphism; such an $h_n$ exists by Proposition \ref{prop:Gluck_twist}. If $\tilde{X}\cong X_{r,m}\# \CPbar\to X_{r,m}$ is a blow-up at a point, then the composition \[
\tilde{X} \cong X_{r,m}\# \CPbar \xrightarrow{(\operatorname{id}\#h_n)^{-1}} X_{r,m}\#n\Sigma \# \CPbar \xrightarrow{g_n\# \operatorname{id}} X_{r,m}\# \CPbar\cong \tilde{X}
\]
is an orientation-preserving diffeomorphism, so the above fact implies that $(g_n\#\operatorname{id})^*\tilde{K}=\pm(\operatorname{id}\#h_n)^*\tilde{K}$. This gives $g_n^*K=\pm K_n$, where $K_n$ is the class corresponding to $K$ under the natural identification $H^2(X_{r,m}\#n\Sigma;\mathbb{Z})\cong H^2(X_{r,m};\mathbb{Z})$. Since $c^*K=-K$ and $H^2(\Sigma;\mathbb{Z})=0$, we clearly have $(c\# n\iota)^*K_n=-K_n$, and therefore \[
c_n^*K=(g_n^{-1})^* (c\# n\iota)^*g_n^* K =(g_n^{-1})^* (c\# n\iota)^* (\pm K_n)=(g_n^*)^{-1}(\mp K_n)=-K.
\]
This proves that $c_n^*[\omega]=-[\omega]$, as desired. 
\end{remark}

We end this section by determining the oriented homeomorphism type of the quotient. For the involution $c$ on $X_{r,m}$ given in Proposition \ref{prop:conjugation}, let $Q_{r,m}:=X_{r,m}/\langle c\rangle$ denote the quotient $4$-manifold. Recall the integers $k_{r,m}$ and $\chi_{r,m}$ defined in \eqref{eq:k_and_chi}, and set \[
K_{r,m}:=rk_{r,m}^2=r\left((r-1)m-3\right)^2.
\]
We have $K_{r,m}=K_{X_{r,m}}^2$ and $\chi_{r,m}=\chi(\mathcal{O}_{X_{r,m}})$; see Proposition \ref{prop:properties_of_X_r,m} and Remark \ref{rmk:chi_r,m}. 
\begin{proposition}\label{prop:homeomorphism_type}
    Suppose both $m$ and $\chi_{r,m}$ are even. Then the quotient $Q_{r,m}=X_{r,m}/\langle c\rangle$ is orientation-preservingly homeomorphic to \[
    (\chi_{r,m}-1)\mathbb{CP}^2 \# \left( 5\chi_{r,m}-1-\frac{K_{r,m}-\ell}{2}\right)\CPbar,
    \]
    where \[
    \ell=\begin{cases}
        1 & \text{if $r$ is odd},\\
        2 & \text{if $r$ is even}.
    \end{cases}
    \]
\end{proposition}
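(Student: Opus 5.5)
The plan is to identify $Q_{r,m}$ up to orientation-preserving homeomorphism by Freedman's classification. First I will show that $Q_{r,m}$ is closed, smooth and simply connected. I will then compute its rank, signature and parity, and finally check that the form is odd. Throughout write $\chi=\chi_{r,m}$ and $K=K_{r,m}$.

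\emph{Simple connectivity and smoothness.} $Q_{r,m}$ is a smooth closed oriented $4$-manifold, because the fixed locus is a surface (\cite[Lemma 7.5]{Hambleton-Hausmann-2011}). It is simply connected: $\pi_1(Q_{r,m}-B)\cong\mathbb{Z}_2$ is generated by a meridian of $B$ (Proposition \ref{prop:conjugation}(c)), and that meridian bounds a normal disk in $Q_{r,m}$. So $H_2(Q_{r,m};\mathbb{Z})$ is free and the Kirby--Siebenmann invariant vanishes. It therefore suffices to compute $b_\pm(Q_{r,m})$ and the parity of the intersection form.

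\emph{Euler characteristic.} Set $F:=X_{r,m}^c$. By Proposition \ref{prop:conjugation}(a), $e(F)=\ell$. From $K_{X}^2=2e+3\sigma$ and $e+\sigma=4\chi$ we get $e(X_{r,m})=12\chi-K$ and $\sigma(X_{r,m})=K-8\chi$. The usual count for a branched double cover then gives
\[
e(Q_{r,m})=\frac{e(X_{r,m})+e(F)}{2}=\frac{12\chi-K+\ell}{2}.
\]

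\emph{Signature.} Here I will use the $G$-signature theorem for involutions, $\sigma(Q_{r,m})=\tfrac12\bigl(\sigma(X_{r,m})+F\cdot F\bigr)$. Since $c$ is anti-holomorphic, $F$ is totally real, so $J$ gives an isomorphism $TF\cong N_F$ that reverses the relevant orientation. Hence the normal Euler number is $-e(F)=-\ell$, and
\[
\sigma(Q_{r,m})=\frac{K-8\chi-\ell}{2}.
\]

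\emph{Betti numbers.} These two formulas give $b_2(Q_{r,m})=e(Q_{r,m})-2$, and hence $b_+(Q_{r,m})=\chi-1$ and $b_-(Q_{r,m})=5\chi-1-\tfrac{K-\ell}{2}$, exactly as claimed. As a consistency check, $b_+(Q_{r,m})=b_+(X_{r,m})^{+c}=(2\chi-1)-\chi=\chi-1$ by Proposition \ref{prop:conjugation}(b). Since $\chi\ge 2$, the form is indefinite.

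\emph{Parity.} Once the form is known to be odd, Freedman's theorem identifies $Q_{r,m}$ with the stated connected sum. I expect this step to be the main obstacle. My first attempt is arithmetic: if the form were even, then $\sigma(Q_{r,m})\equiv 0\pmod{16}$ by Rokhlin, since $Q_{r,m}$ is smooth and spin. I would test $(K-8\chi-\ell)/2$ against this in the allowed residue classes of $(r,m)$ from Remark \ref{rmk:chi_r,m}, using that $k_{r,m}$ is odd and $m$ is even. If that congruence does not rule out evenness in every case, I would exhibit an explicit class of odd square instead. The candidate is the image in $Q_{r,m}$ of a $c$-invariant real holomorphic curve, for instance $\pi^{-1}$ of a generic real line, which is invariant and meets $F$ along real points. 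I would compute its self-intersection in the quotient from its self-intersection $H^2=r$ in $X_{r,m}$ and a correction term coming from its intersection with $F$. A final alternative is to show that $w_2(Q_{r,m})\neq 0$ via the transfer: if $w_2(Q_{r,m})=0$, then the pullback of $w_2(Q_{r,m})$ plus the mod $2$ Poincaré dual of $F$ would compute $w_2(X_{r,m})=k_{r,m}H\neq 0$ (Proposition \ref{prop:properties_of_X_r,m}(c)).
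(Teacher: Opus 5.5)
Your steps up to the Betti numbers are correct and agree with the paper: simple connectivity from the meridian, $e(Q_{r,m})$, and $\sigma(Q_{r,m})$ via $F\cdot F=-\ell$. Rewriting $e(X_{r,m})=12\chi-K$ and $\sigma(X_{r,m})=K-8\chi$ by Noether's formula is a tidy shortcut. The gap is the parity step, which you leave open. None of your three routes settles it when $r\equiv 2\pmod 4$.

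\emph{Rokhlin.} For $r\equiv 0,3\pmod 4$ the signature $\sigma(Q_{r,m})$ is already odd, so those cases are immediate. But for $r=2s$ with $s$ odd (which forces $m\equiv 0\pmod 4$), one computes $\sigma(Q_{r,m})\equiv s-1\pmod{16}$. So Rokhlin gives nothing when $r\equiv 2\pmod{32}$, in particular for $r=2$. For example, $Q_{2,4}$ has $b_+=3$, $b_-=19$, $\sigma=-16$, the numerics of a K3 surface.

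\emph{Real line.} For a real line $L$, the curve $\pi^{-1}(L)$ meets $F$ along circles lying over $L\cap\mathbb{RP}^2$, not in isolated points. Moreover, $c$ reverses the complex orientation of $\pi^{-1}(L)$, so $c_*[\pi^{-1}(L)]=-[\pi^{-1}(L)]$. Hence $p_*[\pi^{-1}(L)]=0$ in the torsion-free group $H_2(Q_{r,m};\mathbb{Z})$, and also mod $2$. Invariant complex curves carry no class downstairs.

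\emph{Transfer.} The identity $w_2(X_{r,m})=p^*w_2(Q_{r,m})+\mathrm{PD}_2[F]$ only reduces the question to showing $\mathrm{PD}_2[F]\neq k_{r,m}H \bmod 2$, which you do not address. Knowing $w_2(X_{r,m})\neq 0$ alone proves nothing. Complex conjugation on $\mathbb{CP}^2$ shows this: $w_2(\mathbb{CP}^2)=\mathrm{PD}_2[\mathbb{RP}^2]\neq 0$, yet the quotient $S^4$ is spin.

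The missing idea is an extra symmetry, which the paper uses as follows.

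\emph{Case $r=2$.} The deck involution $\delta(x,w)=(x,-w)$ commutes with $c$ and descends to an involution $\bar\delta$ on $Q_{2,m}$. This exhibits $Q_{2,m}$ as a double cover of $\mathbb{CP}^2/\langle\kappa\rangle=S^4$ branched along $B/\langle\kappa\rangle$. Since $B\cap\mathbb{RP}^2=\varnothing$, $\kappa$ acts freely and orientation-reversingly on $B$, so $B/\langle\kappa\rangle$ is nonorientable. Nagami's criterion says a double cover of $S^4$ branched along a connected closed surface is spin iff the surface is orientable. Hence $Q_{2,m}$ is not spin.

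\emph{Case $r=2s$ with $s>1$ odd.} The map $v\mapsto v^{\otimes s}$ gives a $c$-equivariant degree-$s$ map $X_{r,m}\to X_{2,sm}$, using the same branch polynomial. It descends to a degree-$s$ map $Q_{r,m}\to Q_{2,sm}$. Pulling back a class of odd square gives a class whose square is $s$ times an odd number, hence odd.
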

\begin{proof}
    The quotient map $p:X_{r,m}\to Q_{r,m}$ is a double cover branched along $B_{r,m}:=p(X_{r,m}^c)$. Since $X_{r,m}-X_{r,m}^c$ is simply-connected, we have $\pi_1(Q_{r,m}-B_{r,m})\cong \mathbb{Z}_2$, generated by a meridian of any component of $B_{r,m}$. This implies that $Q_{r,m}$ is simply-connected. 

    Next, $X_{r,m}^c$ is Lagrangian in $X_{r,m}$, being the fixed locus of an anti-symplectic involution. Thus, its normal Euler number in $X_{r,m}$ equals $-e(X_{r,m}^c)=-\ell$. It follows that the topological Euler characteristic and the signature of $Q_{r,m}$ are given by \[
    e(Q_{r,m})=\frac{e(X_{r,m})+\ell}{2}, \quad  \sigma(Q_{r,m})=\frac{\sigma(X_{r,m})-\ell}{2}.
    \]
    (For the signature, see \cite{Hirzebruch-1969}.) Therefore, the formulas for $e(X_{r,m})$ and $\sigma(X_{r,m})$ given in Proposition \ref{prop:properties_of_X_r,m}(b) yield \[
    b_+(Q_{r,m})=\chi_{r,m}-1, \quad b_-(Q_{r,m})=5\chi_{r,m}-1-\frac{K_{r,m}-\ell}{2}.
    \]

    Now, it remains to show that the intersection form of $Q_{r,m}$ is odd. Since both $m$ and $\chi_{r,m}$ are even, Remark \ref{rmk:chi_r,m} shows that one of the following conditions holds: \[
    \begin{cases}
        r\equiv0 \pmod{4}, \\
        r\equiv2 \pmod{4} \quad \text{and} \quad m\equiv 0\pmod{4}, \\
        r\equiv 3\pmod{4} \quad \text{and} \quad m\equiv 2\pmod{4}.
    \end{cases}
    \]
    When $r\equiv 0$ or $3\pmod{4}$, $\sigma(Q_{r,m})$ is odd, so $Q_{r,m}$ must have odd intersection form. 
    
    Next, consider the case $r=2$. Recall that \[
    X_{2,m}=\left\{(x,w)\in \mathcal{O}_{\mathbb{CP}^2}(m):w^{\otimes 2}=F(x)\in \mathcal{O}_{\mathbb{CP}^2}(2m)\right\},
    \]
    and that our involution $c$ covers complex conjugation $\kappa:\mathbb{CP}^2\to \mathbb{CP}^2$, i.e., $\pi\circ c=\kappa \circ \pi$, where $\pi:X_{2,m}\to \mathbb{CP}^2$ is the projection. The involution $\delta:X_{2,m}\to X_{2,m}$, $(x,w)\mapsto (x,-w)$, commutes with $c$, so it induces an involution $\bar{\delta}:Q_{2,m}\to Q_{2,m}$. Moreover, we have $Q_{2,m}/\langle \bar{\delta}\rangle =X_{2,m}/\langle c,\delta\rangle = \mathbb{CP}^2/\langle \kappa\rangle=S^4$, and the following diagram commutes: \[
    \begin{tikzcd}
        X_{2,m} \ar[d, "p"'] \ar[r, "\pi"] & \mathbb{CP}^2 \ar[d, "/\langle \kappa\rangle"] \\ Q_{2,m} \ar[r, "/\langle \bar{\delta}\rangle"] & S^4.
    \end{tikzcd}
    \] 
    The map $Q_{2,m}\to S^4$ is a double cover branched along $B/\langle \kappa\rangle\subset S^4$, where $B:=\{F=0\}\subset \mathbb{CP}^2$ is the branch locus of $\pi$. Since $F$ has real coefficients, complex conjugation $\kappa:B\to B$ is a free, anti-holomorphic involution. In particular, the quotient $B/\langle \kappa\rangle$ is a closed nonorientable surface. A special case of \cite[Theorem 1.1]{Nagami-2000} shows that a double cover of $S^4$ branched along a smoothly embedded, connected, closed surface $\Gamma$ is spin if and only if $\Gamma$ is orientable. We therefore conclude that $Q_{2,m}$ is non-spin; hence it has odd intersection form. 

    Finally, consider the case $r\equiv 2\pmod{4}$ with $r>2$, write $r=2s$ with $s$ odd. Then the map $\mathcal{O}(m)\to \mathcal{O}(sm)$, $v\mapsto v^{\otimes s}$, restricts to a map $X_{r,m}\to X_{2,sm}$ of degree $s$, where we use the same polynomial of degree $rm$ for both $X_{r,m}$ and $X_{2,sm}$. The induced map $f:Q_{r,m}\to Q_{2,sm}$ on quotients also has degree $s$. Since $Q_{2,sm}$ has odd intersection form, there exists $\alpha\in H^2(Q_{2,sm};\mathbb{Z})$ so that $\langle \alpha^2,[Q_{2,sm}]\rangle$ is odd. Then the pullback $f^*\alpha$ satisfies $\langle (f^*\alpha)^2,[Q_{r,m}]\rangle=s\langle \alpha^2,[Q_{2,sm}]\rangle$, which is again odd. Hence $Q_{r,m}$ has odd intersection form. This completes the proof.     
\end{proof}

\section{Complete intersection surfaces in \texorpdfstring{$\mathbb{CP}^4$}{CP4}}\label{sec:complete_intersection}

In this section, we prove Theorem \ref{thm:complete_intersection}. Let $p$ and $p'$ be homogeneous polynomials in five variables of positive degrees $d$ and $d'$, respectively. Suppose that their gradients are linearly independent at every nonzero common zero in $\mathbb{C}^5$. Then \[
S(d,d'):=\{p=p'=0\}\subset \mathbb{CP}^4
\]
is a smooth complex surface, called a \emph{complete intersection} surface of multidegree $(d,d')$. We will focus on the case $(d,d')=(2,m)$ with $m\geq 4$. The following proposition is well-known; see \cite[Section 1.3]{Gompf-Stipsicz-1999}, for example. 
\begin{proposition}\label{prop:properties_of_s(2,m)}
    For $m\geq 4$, the complete intersection $S_m:=S(2,m)$ has the following properties. \begin{enumerate}[label=\textup{(\alph*)}, leftmargin=2em]
    \item Its oriented diffeomorphism type depends only on $m$. Moreover, the diffeomorphisms can be chosen to identify the class $H:=c_1(\mathcal{O}_{\mathbb{CP}^4}(1)|_{S_m})$. Its canonical line bundle $K_{S_m}\cong \mathcal{O}_{\mathbb{CP}^4}(m-3)|_{S_m}$ is ample.
    \item It is simply-connected, and its topological Euler characteristic and the signature are given by \[
    e(S_m)=2m(m^2-3m+4) \quad \text{and} \quad \sigma(S_m)=-\frac{2m(m^2-1)}{3}.
    \]
    In particular, we have \[
    b_+(S_m)=\frac{m(2m^2-9m+13)}{3}-1.
    \]
    \item It is spin if and only if $m$ is odd.
    \item It is a minimal surface of general type.
    \end{enumerate}
\end{proposition}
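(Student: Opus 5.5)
The plan is to follow the proof of Proposition \ref{prop:properties_of_X_r,m} closely, treating $S_m$ as a smooth complete intersection in $\mathbb{CP}^4$ and computing everything from adjunction and the Lefschetz hyperplane theorem.

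For part (a), join two admissible pairs $(p_0,p_0')$ and $(p_1,p_1')$ by a path $(p_t,p_t')$ along which the nondegeneracy condition holds. This is possible because the discriminant locus in the parameter space $H^0(\mathcal{O}(2))\oplus H^0(\mathcal{O}(m))$ has complex codimension at least one, so its complement is connected. The total space over $[0,1]$ is then a proper submersion and hence a trivial bundle. The pullback of $c_1(\mathcal{O}_{\mathbb{CP}^4}(1))$ restricts to $H$ on every fiber, so the trivializing diffeomorphism preserves $H$. For the canonical bundle, adjunction gives
\[
K_{S_m}\cong \left(K_{\mathbb{CP}^4}\otimes \mathcal{O}(2)\otimes \mathcal{O}(m)\right)|_{S_m}=\mathcal{O}(m-3)|_{S_m}.
\]
This is ample because $m\geq 4$.

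For part (b), simple connectivity follows from the Lefschetz hyperplane theorem, applied twice: $S_m$ is an ample divisor in the smooth quadric threefold, which in turn is a hypersurface in $\mathbb{CP}^4$. For the characteristic classes, write $h=H$. The total Chern class is
\[
c(S_m)=\frac{(1+h)^5}{(1+2h)(1+mh)},
\]
and $h^2[S_m]=2m$. From this one gets $c_1=(3-m)h$ and $c_2=(10-5(m+2)+(4+2m+m^2))h^2=(m^2-3m+4)h^2$. Therefore $e=2m(m^2-3m+4)$ and $K^2=2m(m-3)^2$. The signature follows from $\sigma=(K^2-2e)/3$, and since $b_1=0$ we get $b_+=(e-2+\sigma)/2$. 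These are routine arithmetic checks.

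For part (c), $w_2\equiv c_1\equiv (m-3)h \pmod 2$. So $S_m$ is spin when $m$ is odd. When $m$ is even, I need $h$ to be non-divisible mod $2$ in $H^2(S_m;\mathbb{Z})$. This holds because $h^2=2m$ alone doesn't settle it, but a primitivity argument does. One option is to exhibit a curve $\Gamma$ with $h\cdot\Gamma$ odd, for instance a line in the quadric lying on a suitable special $S_m$, as in the proof of Proposition \ref{prop:properties_of_X_r,m}. Alternatively, Lefschetz shows $H^2(\mathbb{CP}^4)\to H^2(S_m)$ has torsion-free cokernel (in the appropriate Lefschetz range). The cleaner route is to compute
\[
w_2\cdot x\equiv x^2 \pmod 2,
\]
which shows $w_2\neq 0$ once the intersection form is odd. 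I'd find the odd form via a line $\Gamma\subset S_m$ of self-intersection $-2+\dots$, or via the special-polynomial trick: choose $p'$ vanishing on a line inside the quadric. I expect this step to be the main obstacle, and I would try the explicit-line approach first.

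For part (d), ampleness of $K_{S_m}$ from part (a) gives both conclusions at once. It excludes $(-1)$-curves $E$, since $K\cdot E=-1<0$ would contradict ampleness, so $S_m$ is minimal. It also gives $K^2>0$ with $K$ nef, so $S_m$ is of general type.
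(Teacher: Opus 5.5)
Your proposal is correct and follows the standard argument. The paper gives no proof of its own: it calls the result well known and cites Gompf--Stipsicz (Section~1.3). Your deformation/adjunction/Lefschetz/Chern-class route is the same template the paper uses for Proposition~\ref{prop:properties_of_X_r,m}.

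The one unfinished step is (c), and you do not need to choose among your options there. Lefschetz gives $H_2(\mathbb{CP}^4,S_m;\mathbb{Z})=0$, so $H_2(S_m;\mathbb{Z})\to H_2(\mathbb{CP}^4;\mathbb{Z})$ is onto. Hence some class $A$ has $\langle h,A\rangle=1$, and then $\langle w_2,A\rangle\equiv m-3\equiv 1 \pmod{2}$ for $m$ even. Alternatively, the line $L=\{z_4=0,\ z_0=z_1,\ z_2=iz_3\}$ lies on the smooth surface $X_m$ of Proposition~\ref{prop:involution_on_complete_intersection} and has $h\cdot L=1$ and $L^2=1-m$; by (a), this single example suffices.
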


From now on, we will use specific polynomials to define $S(2,m)$. Set \begin{align*}
    q&:=-z_0^2+z_1^2+z_2^2+z_3^2+z_4^2 \in H^0(\mathbb{CP}^4,\mathcal{O}_{\mathbb{CP}^4}(2)), \\
    C&:=\{q=0\}\subset \mathbb{CP}^4, \\
    P_m&:=z_1^{m-1}+2^{m-3}z_2^{m-1}+4^{m-3}z_3^{m-1} \in H^0(\mathbb{CP}^4,\mathcal{O}_{\mathbb{CP}^4}(m-1)), \\
    \lambda_m&:=8m(1+2^{m-3}+4^{m-3})\pi, \\
    F_m&:= z_4(\lambda_mz_0^{m-1}+z_4^{m-1})+(z_0-z_1)P_m \in H^0(\mathbb{CP}^4,\mathcal{O}_{\mathbb{CP}^4}(m)), \\
    X_m&: =C \cap \{F_m=0\}\subset \mathbb{CP}^4.
\end{align*}

\begin{proposition}\label{prop:involution_on_complete_intersection}
   For every $m\geq 4$, $X_m$ is a smooth complete intersection surface of multidegree $(2,m)$. Moreover, there exists a smooth involution $c:X_m\to X_m$ satisfying the following conditions. \begin{enumerate}[label=\textup{(\alph*)}, leftmargin=2em]
        \item The involution $c$ is anti-holomorphic and orientation-preserving. Its fixed locus $X_m^c$ is a smoothly embedded $2$-sphere.
        \item There is a K\"ahler form $\omega$ on $X_m$ such that $[\omega]=H$ and $c^*\omega=-\omega$. 
        \item The complement $X_m-X_m^c$ is simply-connected.
    \end{enumerate}
\end{proposition}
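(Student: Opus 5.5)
The plan is to take $c$ to be the restriction of complex conjugation $\kappa\colon[z]\mapsto[\bar z]$ on $\mathbb{CP}^4$, and to prove $\pi_1(X_m-X_m^c)=1$ by finding a complex sphere in $X_m$ that meets $X_m^c$ transversely in exactly one point.

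\emph{Smoothness and the involution.} I would check smoothness of $X_m=C\cap\{F_m=0\}$ directly. Suppose $dq$ and $dF_m$ are dependent at a common zero. On $\{z_4=0\}$ and $\{z_0=z_1\}$ the expression for $F_m$ simplifies, so I would split into these cases plus the generic one. The very large real constant $\lambda_m$ should dominate the other terms in the generic case, which rules out dependence there. I expect this to be routine but tedious; it is also the step where I am least sure that my case split, rather than some other specific mechanism behind the choice of $\lambda_m$, is the right one. Since $q$ and $F_m$ have real coefficients, $\kappa$ preserves $X_m$ and gives an anti-holomorphic, hence orientation-preserving, involution $c$. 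For (b), the Fubini--Study form $\omega_{FS}$ satisfies $\kappa^*\omega_{FS}=-\omega_{FS}$, so $\omega:=\omega_{FS}|_{X_m}$ works, with $[\omega]=H$.

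\emph{The fixed locus.} $X_m^c$ is the real locus. The real quadric $\{q=0\}\cap\mathbb{RP}^4$ has $z_0\neq0$, so it is the round sphere $S^3=\{z_1^2+z_2^2+z_3^2+z_4^2=1\}$ in the chart $z_0=1$. On it, $X_m^c$ is the zero set of
\[g=z_4\left(\lambda_m+z_4^{m-1}\right)+(1-z_1)P_m .\]
Because $|z_j|\le1$ and $\lambda_m$ is huge, $\lambda_m+z_4^{m-1}>0$. Hence $\partial g/\partial z_4\neq0$ wherever $g=0$, except possibly where the tangent direction is degenerate, and I would use this to show $0$ is a regular value of $g|_{S^3}$. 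For the topology, I would show that $\{g=0\}$ separates $S^3$ into two balls. One way is an isotopy argument: replace $(1-z_1)P_m$ by $t(1-z_1)P_m$ for $t\in[0,1]$. At $t=0$ the zero set is the equator $\{z_4=0\}\cong S^2$, and the regular-value property persists along the path. Alternatively, I would exhibit $\{g=0\}$ directly as a graph over the equatorial $S^2$ via the implicit function theorem. Either way $X_m^c\cong S^2$, which gives (a).

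\emph{Simple connectivity of the complement.} $X_m$ is simply-connected by Proposition \ref{prop:properties_of_s(2,m)}(b), and $X_m^c$ has codimension $2$. Therefore $\pi_1(X_m-X_m^c)$ is normally generated by a meridian of $X_m^c$, and it suffices to show this meridian is nullhomotopic in the complement. Both $F_m$ and $q$ restrict to zero on the lines
\[L_\pm=\{[a:a:b:\pm ib:0]\}\subset\{z_4=0,\ z_0=z_1\},\]
because $F_m$ vanishes identically on $\{z_4=0,\,z_0=z_1\}$ and $q=z_2^2+z_3^2$ there. So $L_\pm\subset X_m$ are complex rational curves, swapped by $c$, and meeting at $x_0=[1:1:0:0:0]\in X_m^c$. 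The real points of $L_+$ satisfy $b=0$, so $L_+\cap X_m^c=\{x_0\}$. At $x_0$ we have $T_{x_0}L_+\cap T_{x_0}X_m^c\subset T_{x_0}L_+\cap T_{x_0}L_-=0$, since the two lines are distinct. Thus $L_+$ is transverse to $X_m^c$, with exactly one intersection point. Removing a small disk around $x_0$ from $L_+$ gives a disk in $X_m-X_m^c$ bounded by a meridian of $X_m^c$. This kills the normal generator, proving (c). I expect the main obstacle to be the explicit smoothness verification of $X_m$; the fundamental group argument is short once the lines $L_\pm$ are spotted.
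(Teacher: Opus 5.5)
Your treatment of (a)--(c) matches the paper. You restrict complex conjugation, take $\omega=\omega_{\mathrm{FS}}|_{X_m}$, write $X_m^c$ as a graph over the equatorial $S^2$ of $\{q=0\}\cap\mathbb{RP}^4\cong S^3$ using the size of $\lambda_m$, and use a line in $\{z_4=0,\ z_0=z_1\}\cap C$ as a geometric dual sphere. Your transversality argument is a tidy substitute for the paper's explicit tangent-space computation: $T_{x_0}L_+\cap T_{x_0}X_m^c\subset T_{x_0}L_+\cap T_{x_0}L_-$, because $dc$ fixes $T_{x_0}X_m^c$ and swaps $T_{x_0}L_\pm$.

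The genuine gap is the first assertion, smoothness of $X_m$, on which everything else rests (including $\pi_1(X_m)=1$ in (c)). You give no argument, and the mechanism you propose, that the large constant $\lambda_m$ dominates, does not work as stated. At a singular point $\nabla F_m=\mu\nabla q$ for an unknown $\mu$. The $z_4$-equation $\lambda_m z_0^{m-1}+mz_4^{m-1}=2\mu z_4$ can then be balanced either by $\mu$ large or by $z_0$ small. Every $\lambda_m$-term in $F_m$ and $\nabla F_m$ vanishes to order at least $m-2$ along $\{z_0=0\}$, so near that locus nothing is dominated, and your case split does not isolate it. Put differently, as $\lambda\to\infty$ the surfaces $\{q=0,\ \lambda z_4z_0^{m-1}+z_4^m+(z_0-z_1)P_m=0\}$ degenerate to the non-reduced $\{q=0,\ z_4z_0^{m-1}=0\}$. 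So smoothness for large $\lambda$ is not a perturbation statement. Even granting that only finitely many $\lambda$ give singular members, you would need an explicit bound placing all of them below $\lambda_m$, and nothing in your sketch provides one.

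The missing idea is that $\lambda_m=8m(1+2^{m-3}+4^{m-3})\pi$ is chosen to be transcendental; its size is used only for the fixed locus. Given a singular point $z$ with $\nabla F_m(z)=\mu\nabla q(z)$, the paper takes a $\mathbb{Q}$-derivation $D$ of $\mathbb{Q}(\lambda_m,z_0,\dots,z_4,\mu)$ with $D(\lambda_m)=1$ (extend $\lambda_m$ to a transcendence basis). Applying $D$ to $q(z)=0=F_m(z)$ and using $\nabla F_m=\mu\nabla q$ gives $z_4z_0^{m-1}=0$. This is a Bertini-type reduction to the base locus of the pencil. There are then two cases.
\begin{itemize}
\item If $z_0\neq0$, then $z_4=0$ and $\partial F_m/\partial z_4=\lambda_m z_0^{m-1}\neq0=\mu\,\partial q/\partial z_4$, which is impossible.
\item If $z_0=0$, then $P_m(z)=(\partial F_m/\partial z_0)(z)=-2\mu z_0=0$, so $z_4^m=F_m(z)=0$. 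This leaves $z_1^2+z_2^2+z_3^2=0=P_m(z)$, and here the specific coefficients of $P_m$ are used. First, $z_1\neq0$: otherwise $|z_2|=|z_3|\neq0$ and $P_m=0$ forces $2^{m-3}=4^{m-3}$. Then the $z_1$-equation gives $2\mu=-(m-1)z_1^{m-2}$, and the $z_2$- and $z_3$-equations give $|z_2|\le|z_1|/2$ and $|z_3|\le|z_1|/4$. This contradicts $|z_1|^2=|z_2^2+z_3^2|\le\tfrac{5}{16}|z_1|^2$.
\end{itemize}
Some argument of this kind, or an explicit bound on the singular members of the pencil, is needed to complete your proof.
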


To prove this proposition, we shall use two elementary lemmas, one algebraic and one geometric.
\begin{lemma}\label{lem:transcendental}
Let $E\subset \mathbb{C}$ be a finitely generated field extension of $\mathbb{Q}$. Suppose $\lambda \in E$ is transcendental over $\mathbb{Q}$. Then there exists a $\mathbb{Q}$-derivation $D:E\to E$ such that $D(\lambda)=1$.
\end{lemma}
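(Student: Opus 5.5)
The plan is to use the standard theory of derivations for finitely generated field extensions in characteristic zero. Since $E$ is finitely generated over $\mathbb{Q}$, it has finite transcendence degree, and we can choose a transcendence basis containing $\lambda$. Concretely, $\lambda$ is transcendental, so $\{\lambda\}$ is algebraically independent over $\mathbb{Q}$. We extend it, using the finitely many generators of $E$, to a transcendence basis $\{\lambda=t_1,t_2,\dots,t_n\}$ of $E/\mathbb{Q}$. Then $E$ is a finite algebraic extension of the purely transcendental field $K:=\mathbb{Q}(t_1,\dots,t_n)$.

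First, I define the derivation on $K$. The partial derivative $\partial/\partial t_1$ is a $\mathbb{Q}$-derivation of the polynomial ring $\mathbb{Q}[t_1,\dots,t_n]$. By the quotient rule it extends uniquely to a $\mathbb{Q}$-derivation $D_0:K\to K$ with $D_0(t_1)=1$ and $D_0(t_j)=0$ for $j\geq 2$.

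Second, I extend $D_0$ from $K$ to $E$. Since $\operatorname{char}=0$, the finite extension $E/K$ is separable. By the primitive element theorem, $E=K(\alpha)$, where $\alpha$ has minimal polynomial $g(x)=\sum a_i x^i\in K[x]$ with $g'(\alpha)\neq 0$. Write $g^{D_0}(x):=\sum D_0(a_i)x^i$. Any extension $D$ must satisfy $0=D(g(\alpha))=g^{D_0}(\alpha)+g'(\alpha)D(\alpha)$, so we are forced to set
\[
D(\alpha):=-\frac{g^{D_0}(\alpha)}{g'(\alpha)}.
\]
We then define $D$ on $K[x]$ by $D(h):=h^{D_0}+h'\cdot D(\alpha)$, evaluated at $\alpha$.

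The step that needs real checking is that this formula is well defined on $E\cong K[x]/(g)$. This is the one nonformal point, and I expect it to be the main obstacle. The map $h\mapsto h^{D_0}(\alpha)+h'(\alpha)D(\alpha)$ is a derivation $K[x]\to E$ lying over $D_0$. For $h=gk$ the Leibniz rule gives
\[
(gk)^{D_0}(\alpha)+(gk)'(\alpha)D(\alpha)=k(\alpha)\bigl(g^{D_0}(\alpha)+g'(\alpha)D(\alpha)\bigr)=0,
\]
using $g(\alpha)=0$ and the choice of $D(\alpha)$. So the map vanishes on the ideal $(g)$ and descends to a $\mathbb{Q}$-derivation $D:E\to E$. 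It extends $D_0$, so $D(\lambda)=D_0(t_1)=1$, as required.

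The only other point to confirm is the existence of the transcendence basis containing $\lambda$ inside a finite generating set. This is the standard exchange argument: a maximal algebraically independent subset of $\{\lambda\}\cup\{\text{generators}\}$ that contains $\lambda$ is such a basis.
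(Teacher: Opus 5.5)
Your proposal is correct and follows the paper's approach: extend $\lambda$ to a transcendence basis, take $\partial/\partial\lambda$ on the purely transcendental subfield, and extend it across the finite separable extension. You supply the details that the paper's one-line proof leaves implicit, namely the primitive element, the forced value of $D(\alpha)$, and the check that the derivation vanishes on $(g)$.
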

Recall that a $\mathbb{Q}$-linear map $E\to E$ is called a \emph{$\mathbb{Q}$-derivation} if it satisfies Leibniz's rule. 
\begin{proof}
    This follows by extending $\lambda$ to a transcendence basis of $E$ over $\mathbb{Q}$, and then extending $\frac{\partial}{\partial \lambda}$ to the resulting finite separable extension. 
\end{proof}

\begin{lemma}\label{lem:geometric_dual}
    Let $X$ be a simply-connected smooth $4$-manifold, and let $F\subset X$ be a smoothly embedded, connected, closed surface. If there exists a smoothly embedded $2$-sphere $S\subset X$ intersecting $F$ transversely at a single point, then $X-F$ is simply-connected.
\end{lemma}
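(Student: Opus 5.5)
The plan is to use the sphere $S$ to show that the meridian of $F$ is null-homotopic in $X-F$, and then to see that this meridian normally generates the kernel of $\pi_1(X-F)\to\pi_1(X)$. Since $\pi_1(X)=1$, the whole group $\pi_1(X-F)$ will then be trivial.

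\emph{First step (kernel is generated by the meridian).} Let $\nu F$ be a closed tubular neighborhood of $F$, and set $W:=X-\operatorname{int}\nu F$, so that $W\simeq X-F$. Then $X=W\cup_{\partial\nu F}\nu F$. Here $\partial\nu F$ is a circle bundle over the connected surface $F$, and $\nu F\simeq F$. The inclusion $\partial\nu F\to\nu F$ is surjective on $\pi_1$, and its kernel is normally generated by the fiber circle $\mu$, which is a meridian of $F$. This is the long exact sequence of the fibration $S^1\to\partial\nu F\to F$; it works also when $F$ is nonorientable, with a choice of basepoint. Seifert--van Kampen then gives
\[
\pi_1(X)\cong \pi_1(W)*_{\pi_1(\partial\nu F)}\pi_1(\nu F)\cong \pi_1(W)/\langle\!\langle \mu\rangle\!\rangle .
\]
Since $\pi_1(X)=1$, the group $\pi_1(W)$ is normally generated by $\mu$. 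Since $F$ is connected, all meridians are conjugate, so the choice of $\mu$ does not matter.

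\emph{Second step (killing the meridian).} After shrinking $\nu F$, the sphere $S$ meets $\nu F$ in a single normal disk fiber $\Delta$, because $S\pitchfork F$ is one point. Therefore $S\cap W=S-\operatorname{int}\Delta$ is a disk, and its boundary is $\partial\Delta$, a meridian of $F$. So this meridian bounds a disk in $W$ and is trivial in $\pi_1(W)$. All meridians are conjugate, so the normal closure $\langle\!\langle\mu\rangle\!\rangle$ is trivial. Combined with the first step, this gives $\pi_1(X-F)\cong\pi_1(W)=1$.

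The only point needing care is the first step, namely the identification of the kernel of $\pi_1(\partial\nu F)\to\pi_1(\nu F)$ with the normal closure of the fiber. It follows from the homotopy exact sequence of the circle bundle $\partial\nu F\to F$, since the fiber $S^1$ maps to a point in $\nu F\simeq F$ and $\pi_2(F)\to\pi_1(S^1)$ causes no problem. Once this is in place, the rest is routine.
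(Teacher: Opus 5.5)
Your proof is correct and follows essentially the same approach as the paper. The paper removes a small disk around $S\cap F$ from $S$ to obtain a disk in $X-F$ bounded by a meridian, and then cites as standard that $\pi_1(X-F)$ is normally generated by a meridian when $X$ is simply-connected and $F$ is connected; your van Kampen argument over $X=W\cup_{\partial\nu F}\nu F$, using the circle-bundle exact sequence, is a proof of that standard fact.
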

\begin{proof}  Removing a small open disk around the unique intersection point $S\cap F$ from $S$ gives a disk in $X-F$ whose boundary is a meridian of $F$. This proves the result since $\pi_1(X-F)$ is normally generated by a meridian of $F$. 
\end{proof}

\begin{proof}[Proof of Proposition \ref{prop:involution_on_complete_intersection}]
    We first show that $X_m$ is smooth. Suppose that there exists a nonzero $z=(z_0,\dots,z_4)\in \mathbb{C}^5$ such that $q(z)=F_m(z)=0$, and the gradients $\nabla F_m(z)$ and $\nabla q(z)$ are linearly dependent. Since $\nabla q(z)\neq 0$, there exists $\mu\in \mathbb{C}$ such that $\nabla F_m(z)=\mu \nabla q(z)$. Since $\lambda_m$ is transcendental over $\mathbb{Q}$, the field $E:=\mathbb{Q}(\lambda_m,z_0,\dots,z_4,\mu)$ admits a $\mathbb{Q}$-derivation $D$ satisfying $D(\lambda_m)=1$ by Lemma \ref{lem:transcendental}. Applying $D$ to the equations $q(z)=0$ and $F_m(z)=0$ gives \[
    \sum_{j=0}^4 \frac{\partial q}{\partial z_j}(z) D(z_j)=0=z_4z_0^{m-1}+\sum_{j=0}^4 \frac{\partial F_m}{\partial z_j}(z)D(z_j).
    \]
    Since $\nabla F_m(z)=\mu \nabla q(z)$, it follows that $z_4z_0^{m-1}=0$. If $z_0\neq 0$, then $z_4=0$, and $\frac{\partial F_m}{\partial z_4}(z)=\lambda_m z_0^{m-1}\neq 0$ and $\frac{\partial q}{\partial z_4}(z)=0$, a contradiction. Thus, $z_0=0$, so $0=\mu \frac{\partial q}{\partial z_0}(z)=\frac{\partial F_m}{\partial z_0}(z)=P_m(z)$. Since $F_m(z)=0$, it follows that $z_4=0$. Consequently, \[
    0=q(z)=z_1^2+z_2^2+z_3^2, \quad 0=P_m(z)=z_1^{m-1}+2^{m-3}z_2^{m-1}+4^{m-3}z_3^{m-1}.
    \]
    Hence $z_1\neq 0$; otherwise the first equation gives $|z_2|=|z_3|\neq 0$, so the second would give $2^{m-3}=4^{m-3}$, a contradiction. Now $\frac{\partial F_m}{\partial z_1}(z)=\mu \frac{\partial q}{\partial z_1}(z)$ gives $2\mu = -(m-1)z_1^{m-2}$. Then the equations $\frac{\partial F_m}{\partial z_2}(z)=\mu \frac{\partial q}{\partial z_2}(z)$ and $\frac{\partial F_m}{\partial z_3}(z)=\mu \frac{\partial q}{\partial z_3}(z)$ give $2^{m-3}z_2^{m-2}=z_1^{m-3}z_2$ and $4^{m-3}z_3^{m-2}=z_1^{m-3}z_3$, which imply $|z_2|\leq |z_1|/2$ and $|z_3|\leq |z_1|/4$. But this yields a contradiction: \[
    |z_1|^2 = \left|z_2^2+z_3^2\right|\leq |z_2|^2+|z_3|^2 \leq \frac{5}{16}|z_1|^2.
    \]
    This completes the proof that $X_m$ is a smooth complete intersection surface of multidegree $(2,m)$. 

    Since both $q$ and $F_m$ have real coefficients, complex conjugation on $\mathbb{CP}^4$ restricts to an anti-holomorphic involution $c$ on $X_m$. It preserves the complex orientation since $\dim_{\mathbb{C}}X_m=2$. If $\omega_{\mathrm{FS}}$ is the Fubini--Study form on $\mathbb{CP}^4$ normalized so that $[\omega_{\mathrm{FS}}]=c_1(\mathcal{O}_{\mathbb{CP}^4}(1))$, then its restriction $\omega:=\omega_{\mathrm{FS}}|_{X_m}$ to $X_m$ is a K\"ahler form with $[\omega]=H$ and $c^*\omega=-\omega$, proving part (b).

    The fixed locus $X_m^c$ is precisely the real locus of $X_m$, i.e., $X_m^c=X_m\cap \mathbb{RP}^4$. Observe that every real point of $C=\{q=0\}$ has $z_0\neq 0$. Thus $C_{\mathbb{R}}:=C\cap \mathbb{RP}^4$ is contained in the affine chart $z_0=1$, and is identified with $S^3$: \[
    C_{\mathbb{R}}=\{x_1^2+x_2^2+x_3^2+x_4^2=1\} \cong S^3.
    \]
    Under this identification, $X_m^c\subset S^3$ is the zero set of the function $g:S^3\to \mathbb{R}$ defined by \[
    g(x)=x_4+\frac{1}{\lambda_m}\left(x_4^m+(1-x_1)\left(x_1^{m-1}+2^{m-3}x_2^{m-1}+4^{m-3}x_3^{m-1} \right) \right)=:x_4+\frac{1}{\lambda_m} R(x).
    \]
    Let us show that $g^{-1}(0)\subset S^3$ is a smoothly embedded $2$-sphere. Letting $s_m:=1+2^{m-3}+4^{m-3}$, we have \[
    |R(x)| \leq 3s_m , \quad \|\nabla R(x)\| \leq 3ms_m \quad (x\in S^3),
    \]
    where the gradient $\nabla R$ is computed by regarding $R$ as a function on $\mathbb{R}^4$. In particular, if $g(x)=0$, then $|x_4|<\frac{1}{2}$ since $\lambda_m=8ms_m\pi >8ms_m$. Parametrize $S^3\cap \{|x_4|\leq \frac{1}{2}\}$ by the map $\Psi:S^2\times [-\frac{1}{2},\frac{1}{2}]\to S^3$ defined by $\Psi(y,t)=(\sqrt{1-t^2}y,t)$, and set $G(y,t)=g(\Psi(y,t))$. Since $G(y,\pm\frac{1}{2})=\pm\frac{1}{2}+\frac{1}{\lambda_m}R(\Psi(y,\pm \frac{1}{2}))$, we have $G(y,-\frac{1}{2})<0<G(y,\frac{1}{2})$. Moreover, we have \begin{align*}
        \left|\frac{\partial G}{\partial t}(y,t)-1 \right|=\left|\frac{1}{\lambda_m}\frac{\partial R(\Psi(y,t))}{\partial t} \right| \leq \frac{1}{\lambda_m}\ \left\|\nabla R(\Psi(y,t)) \right\|\cdot \left\|\frac{\partial \Psi}{\partial t}\right\| \leq \frac{3ms_m}{\lambda_m}\cdot \frac{2}{\sqrt{3}}<\frac{1}{2}.
    \end{align*}
    Thus, for each $y\in S^2$, there is a unique solution $t=t(y)\in (-\frac{1}{2},\frac{1}{2})$ to $G(y,t)=0$, and the map $y\mapsto t(y)$ is smooth by the implicit function theorem. Hence the map $y\mapsto \Psi(y,t(y))$ is a diffeomorphism of $S^2$ onto $X_m^c$, proving part (a).

    Finally, consider the line \[
    L:=\left\{ z_4=0, \, z_0=z_1,\,z_2=iz_3 \right\}\subset \mathbb{CP}^4,
    \]
    which is contained in $X_m$ and meets $X_m^c$ exactly at the point $p:=[1:1:0:0:0]$. Let us show that this intersection is transverse. Use affine coordinates $u_j=z_j/z_0$  ($j=1,\dots,4$) on $U:=\{z_0\neq 0\}\subset\mathbb{CP}^4$. Define $q_0(u)=q(1,u_1,\dots,u_4)$ and $f_m(u)=F_m(1,u_1,\dots,u_4)$; then $X_m\cap U=\{q_0=f_m=0\}$. Computing the differentials of $q_0,f_m:\mathbb{C}^4\to \mathbb{C}$ shows that $T_pX_m=\{v\in \mathbb{C}^4:v_1=v_4=0\}$, and therefore $T_pX_m^c=\{v\in \mathbb{R}^4:v_1=v_4=0\}$. On the other hand, we have $T_pL=\{v\in \mathbb{C}^4:v_1=v_4=0,\,v_2=iv_3\}$. Thus, $T_pX_m^c\cap T_pL=\{0\}$, which shows that $X_m^c$ and $L$ intersect transversely at $p$. Now, Lemma \ref{lem:geometric_dual} implies that $X_m-X_m^c$ is simply-connected, proving part (c).    
\end{proof}

Now we can prove Theorem \ref{thm:complete_intersection}.

\begin{proof}[Proof of Theorem \ref{thm:complete_intersection}]
     For each $k\geq 1$, let $Y_k:=X_{4k}$. By Proposition \ref{prop:involution_on_complete_intersection}, it is a smooth complete intersection surface of multidegree $(2,4k)$, and there is an anti-holomorphic involution $c$ on $Y_k$ such that the fixed locus $Y_k^c$ is a $2$-sphere and $Y_k-Y_k^c$ is simply-connected. Moreover, there is a K\"ahler form $\omega$ on $Y_k$ with $c^*\omega=-\omega$ and $[\omega]=H$. Now, Proposition \ref{prop:properties_of_s(2,m)} shows that $Y_k$ is a non-spin complex surface of general type, and that \[
     b_+(Y_k)=\frac{4k(32k^2-36k+13)}{3}-1.
     \]
     It is easy to see that $b_+(Y_k)\equiv 3\pmod{4}$. Consequently, $(Y_k,\omega,c)$ satisfies the hypotheses of Corollary \ref{cor:symplectic}, so Theorem \ref{thm:main} implies $Y_k$ admits an infinite exotic family $\{c_n\}_{n=0}^\infty$ of involutions with $c_0=c$, such that every $c_n$ with $n\geq 1$ is not anti-symplectic with respect to any orientation-compatible symplectic form. The assertion about exotic surfaces in $Y_k/\langle c\rangle$ follows from Theorem \ref{thm:isotopy}. Proposition \ref{prop:properties_of_s(2,m)}(b) gives simple connectivity of $Y_k$, and irreducibility of $Y_k$ can be proved exactly as in the proof of Theorem \ref{thm:branched_cover}. \end{proof}

\begin{remark}\label{rmk:complete_antisymplectic}
    Although $c_n$ for each $n\geq 1$ is not anti-symplectic with respect to any orientation-compatible symplectic form on $Y_k$, it satisfies $c_n^*[\omega]=-[\omega]$ on cohomology for the K\"ahler form $\omega$ used in the proof of Theorem \ref{thm:complete_intersection}. This can be seen exactly as in Remark \ref{rmk:cohomology_antisymplectic}, noting that $c_1(K_{Y_k})=(4k-3)H$ by Proposition \ref{prop:properties_of_s(2,m)}(a).
\end{remark}

We end this section by determining the oriented homeomorphism type of the quotient for the involution $c$ defined in Proposition \ref{prop:involution_on_complete_intersection}.
\begin{proposition}\label{prop:quotient_homeo_type_complete_intersection}
    For every $m\geq 4$, the quotient $Q_m:=X_m/\langle c\rangle$ is orientation-preservingly homeomorphic to \[
    \frac{(m-1)(m-2)(2m-3)}{6}\mathbb{CP}^2\#\frac{m(4m^2-9m+11)}{6}\CPbar.
    \]
\end{proposition}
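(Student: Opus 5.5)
The plan follows the proof of Proposition \ref{prop:homeomorphism_type}: show that $Q_m$ is simply-connected, compute $e(Q_m)$ and $\sigma(Q_m)$, check that the intersection form is odd, and conclude by Freedman's classification. Since $Q_m$ is smooth, the Kirby--Siebenmann invariant vanishes, so the oriented homeomorphism type is determined by $b_\pm$ and the parity of the form.

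\textbf{Simple connectivity.} The quotient map $p:X_m\to Q_m$ is a double cover branched along the sphere $B:=p(X_m^c)$. By Proposition \ref{prop:involution_on_complete_intersection}(c), $X_m-X_m^c$ is simply-connected. Hence $\pi_1(Q_m-B)\cong\mathbb{Z}_2$, generated by a meridian of $B$. Filling in $B$ kills this meridian, so $Q_m$ is simply-connected.

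\textbf{Euler characteristic and signature.} The fixed sphere $X_m^c$ is Lagrangian, being the fixed locus of an anti-symplectic involution, so its normal Euler number is $-e(S^2)=-2$. As before,
\[
e(Q_m)=\frac{e(X_m)+2}{2},\qquad \sigma(Q_m)=\frac{\sigma(X_m)-2}{2}.
\]
Substituting $e(X_m)=2m(m^2-3m+4)$ and $\sigma(X_m)=-2m(m^2-1)/3$ from Proposition \ref{prop:properties_of_s(2,m)}(b) gives
\[
e(Q_m)=m^3-3m^2+4m+1,\qquad \sigma(Q_m)=-\frac{m^3-m+3}{3}.
\]
Since $b_1(Q_m)=0$, we get $b_++b_-=e(Q_m)-2$ and $b_+-b_-=\sigma(Q_m)$. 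Solving yields
\[
b_+=\frac{2m^3-9m^2+13m-6}{6}=\frac{(m-1)(m-2)(2m-3)}{6},\qquad b_-=\frac{m(4m^2-9m+11)}{6}.
\]
These match the statement.

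\textbf{Oddness of the intersection form.} This is the main obstacle. Whenever $\sigma(Q_m)$ is odd we are done immediately. That happens exactly when $(m^3-m)/3$ is even, which covers some residues of $m$ (for example $m\equiv 0,1,3 \pmod 4$) but not all.

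In the remaining cases I would use a geometric characteristic-type class, imitating the line $L$ from the proof of Proposition \ref{prop:involution_on_complete_intersection}. The line $L\subset X_m$ meets $X_m^c$ transversally in the single point $[1:1:0:0:0]$. Its conjugate $c(L)=\{z_4=0,\,z_0=z_1,\,z_2=-iz_3\}$ meets $L$ only at that point. Consequently $p(L)=p(L\cup c(L))$ is an immersed or embedded sphere in $Q_m$ that is transverse to $B$ at one point.

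I would compute its self-intersection via $p^*[p(L)]=[L]+[c(L)]$, which gives
\[
2\,[p(L)]^2=(L+cL)^2=2L^2+2L\cdot cL.
\]
By adjunction in $X_m$, with $K_{X_m}=(m-3)H$ and $H\cdot L=1$, we get $L^2=-2-(m-3)=1-m$. Also $L\cdot cL=\pm1$, the local intersection at the real point. This gives $[p(L)]^2=1-m\pm1$, where the sign has to be fixed by a local orientation check.

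If that parity comes out odd in the cases where $\sigma$ is even, the form is odd. Alternatively, one can try to show $Q_m$ is non-spin by exhibiting a class $x$ with $x^2$ odd, or by arguing that $w_2(Q_m)$ pulls back to $w_2(X_m)+\text{PD}[X_m^c]$ mod $2$, which is nonzero when $m$ is even. The delicate point is this parity bookkeeping, and I expect it to take the most care.
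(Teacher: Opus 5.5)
Your outline (simple connectivity, $e$ and $\sigma$ from the Lagrangian fixed sphere, then Freedman) is the paper's, and your values of $e(Q_m)$, $\sigma(Q_m)$ and $b_\pm$ are correct. The gap is in the parity step, and it comes from an arithmetic slip. You claim $\sigma(Q_m)=-(m^3-m+3)/3$ is odd only for some residues of $m$, and you leave the others (implicitly $m\equiv 2 \pmod 4$) to an unfinished fallback. In fact $(m^3-m)/3=(m-1)m(m+1)/3$ is always even, because a product of three consecutive integers is divisible by $6$. Hence $\sigma(Q_m)=-(m^3-m)/3-1$ is odd for every $m$; for instance $\sigma(Q_6)=-71$. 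The paper uses the same fact in the form that $b_2(Q_m)=m(m^2-3m+4)-1$ is odd, since $m^2-3m+4=m(m-3)+4$ is even. A unimodular form of odd rank has odd signature, so it cannot be even. With this one line your argument is complete and coincides with the paper's; the ``main obstacle'' does not exist.

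You should drop the fallback, because it would not have worked in the cases you were worried about. The map $c$ carries $L$ anti-holomorphically onto $c(L)$, so $c_*[L]=-[c(L)]$ for the complex orientations. The transfer $p^{!}$ therefore satisfies $p^{!}[p(L)]=[L]+c_*[L]=[L]-[c(L)]$, not $[L]+[c(L)]$. This gives $[p(L)]^2=L^2-L\cdot c(L)=(1-m)-1=-m$, which is even whenever $m$ is even. Also, $p(L)$ is not transverse to $B$. Since $T_pL$ is a complex line and $T_pX_m^c$ is totally real, the image $p(L)$ is a smooth disk tangent to $B$ at the branch point, with local intersection number $\pm 2$. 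Finally, the $w_2$ alternative is only asserted. Note that $w_2(X_m)+\operatorname{PD}[X_m^c]$ already evaluates to zero on $[L]$ when $m$ is even, so you would have needed a different test class.
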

\begin{proof}
    The argument is similar to the proof of Proposition \ref{prop:homeomorphism_type}, but more simple. $Q_m$ is simply-connected, and its topological Euler characteristic and signature are given by \[
    e(Q_m)=\frac{e(X_m)+2}{2}, \quad \sigma(Q_m)=\frac{\sigma(X_m)-2}{2},
    \]
    so the formulas in Proposition \ref{prop:properties_of_s(2,m)}(b) give \[
    b_+(Q_m)= \frac{(m-1)(m-2)(2m-3)}{6}, \quad b_-(Q_m)=\frac{m(4m^2-9m+11)}{6}.
    \]
    Note that $b_2(Q_m)=m(m^2-3m+4)-1$ is always odd, which implies that the intersection form of $Q_m$ is odd. This gives the result. 
\end{proof}

\bibliography{references}{}
\bibliographystyle{alpha}
\end{document}